\newtheorem{theo}{Theorem}[section]
\newtheorem{prop}[theo]{Proposition}
\newtheorem{lemma}[theo]{Lemma}
\newtheorem{conj}[theo]{Conjecture}
\newtheorem{claim}[theo]{Claim}
\newcommand{\dd}{\delta}
\newcommand{\sstar}{{s^*}}
\newcommand{\eps}{{\varepsilon}}
\begin{document}
\date{}

\title{
Irregular subgraphs
}

\author{Noga Alon
\thanks
{Department of Mathematics, Princeton University,
Princeton, NJ 08544, and
Schools of Mathematics and
Computer Science, Tel Aviv University, Tel Aviv 69978,
Israel.  
Email: {\tt nalon@math.princeton.edu}.  
Research supported in part by
NSF grant DMS-1855464, BSF grant 2018267
and the Simons Foundation.}
\and 
Fan Wei
\thanks
{
Department of Mathematics, Princeton University, Princeton, NJ
08544. 
Email: {\tt fanw@princeton.edu}.
Research supported by NSF Award
DMS-1953958.} 
}

\maketitle
\begin{abstract}
We suggest two related conjectures dealing with the existence of
spanning irregular subgraphs of graphs. The first asserts that any
$d$-regular graph on $n$  vertices contains a spanning subgraph in
which the number of vertices of each degree between $0$ and $d$
deviates from $\frac{n}{d+1}$ by at most $2$. The second is that every
graph on $n$ vertices with minimum degree $\delta$ contains a
spanning subgraph in which the number of vertices of each degree
does not exceed $\frac{n}{\delta+1}+2$. 
Both conjectures remain open, but we
prove several asymptotic relaxations for graphs with a large number
of vertices $n$. In particular we show that if
$d^3 \log n \leq o(n)$ then every $d$-regular graph with $n$
vertices contains a spanning subgraph in which the number of
vertices of each degree between $0$ and $d$ is 
$(1+o(1))\frac{n}{d+1}$.
We also prove that 
any graph
with $n$ vertices and minimum degree $\delta$ contains a spanning
subgraph in which no degree is repeated more than
$(1+o(1))\frac{n}{\delta+1}+2$ times.
\vspace{0.2cm}

\noindent
AMS Subject classification: 05C35, 05C07.  Keywords: irregular
subgraph, repeated degrees.
\end{abstract}

\section{Introduction}
All graphs considered here are simple, that is, contain no loops
and no parallel edges. For a graph $G$ and a nonnegative integer
$k$, let $m(G,k)$ denote the number of vertices of degree $k$ in
$G$, and let $m(G)=\max_k m(G,k)$ denote the maximum number of
vertices of the same degree in $G$. 
One of the basic facts in
Graph Theory is the statement that for every graph $G$ with at least $2$
vertices, $m(G) \geq 2$. 
In this paper we 
suggest the following two related conjectures.
\begin{conj}
\label{c11}
Every $d$-regular graph $G$ on $n$ vertices contains a spanning
subgraph $H$ so that for every $k$, $0 \leq k \leq d$,
$\left|m(H,k)-\frac{n}{d+1}\right|\leq 2.$
\end{conj}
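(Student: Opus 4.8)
The plan is to reduce Conjecture~\ref{c11} to the existence of a spanning subgraph realizing, almost exactly, a prescribed \emph{balanced} target degree sequence, and then to repair the small residual imbalance by local switchings. Assume for simplicity that $(d+1)\mid n$ (in general one asks each count to be $\lfloor\frac{n}{d+1}\rfloor$ or $\lceil\frac{n}{d+1}\rceil$, both within $1$ of $\frac{n}{d+1}$). It then suffices to produce a partition $V(G)=V_0\cup V_1\cup\dots\cup V_d$ with $|V_k|=\frac{n}{d+1}$ for all $k$, together with a spanning subgraph $H$ with $\deg_H(v)=k$ for every $v\in V_k$ apart from at most two exceptional vertices per class, which are permitted to have $\deg_H(v)\in\{k-1,k+1\}$; such an $H$ keeps every $m(H,k)$ within $2$ of $\frac{n}{d+1}$. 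An exactly balanced $b$-factor, with no exceptions at all, is the cleanest target to aim for; that only two exceptions per class are affordable is already a sign that the constant $2$ is delicate.

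The core step is to choose the partition \emph{compatibly with the structure of $G$} so that this (near) balanced $b$-factor, or at least a fractional relaxation of it, exists. A fractional subgraph $x\colon E(G)\to[0,1]$ with $\sum_{e\ni v}x_e=b(v)$ for all $v$ is a point of a transportation polytope, which by Hoffman's circulation theorem is nonempty precisely when $\bigl|\sum_{v\in S}\bigl(b(v)-\tfrac{d}{2}\bigr)\bigr|\le\tfrac12\,e\SSS$ for every $S\subseteq V(G)$ (writing $x_e=\tfrac12+y_e$ turns this into routing the balanced ``degree demand'' $b(v)-\tfrac{d}{2}$ with capacity $\tfrac12$ per edge). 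When $G$ expands well this holds for essentially any balanced $b$; in general one first decomposes $G$ along its sparse cuts into well-connected pieces and then takes $b$ balanced \emph{within each piece}, exploiting that the target need only be globally balanced. Given a feasible $x^\ast$, round it to an integral spanning subgraph $H_0$ by repeatedly moving $x^\ast$ to a vertex of the polytope along alternating cycles and paths (pipage rounding); even cycles preserve all vertex sums, so $\deg_{H_0}(v)$ can differ from $b(v)$ only on a comparatively small set of vertices --- and a randomized version of the rounding keeps the total perturbation $o\!\bigl(\tfrac{n}{d+1}\bigr)$.

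It then remains to correct the discrepancies left by the rounding. The tool is a \emph{degree-shifting gadget}: a short structure alternating between edges of $H$ and edges of $G\setminus H$ whose toggling raises $\deg_H$ at one prescribed vertex by $1$, lowers it at another by $1$, and changes no other degree. Reserving in advance, by an absorption-type argument, a sparse subgraph of $G$ that guarantees such a gadget between any over-represented and any under-represented vertex, one transfers mass between the degree classes until each $m(H,k)$ comes within $2$ of $\frac{n}{d+1}$.

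The step I expect to be the real obstacle --- and the reason Conjecture~\ref{c11} is still open --- is making the last two steps both tight and mutually consistent. The degree-shifting gadgets can genuinely be blocked: in $K_{d+1}$, for instance, no subgraph simultaneously contains a vertex of degree $0$ and a vertex of degree $d$, so some balanced targets are outright infeasible for that $G$ and must be avoided already when the partition is chosen; hence the choice of $b$ in the core step and the repair in the final step cannot be carried out independently. A slack version of the whole scheme, asking only that each $m(H,k)$ deviate from $\frac{n}{d+1}$ by $o\!\bigl(\tfrac{n}{d+1}\bigr)$ rather than by $2$, sidesteps this coupling and goes through by the fractional/rounding route alone, yielding the asymptotic statements; pushing the error all the way down to the absolute constant $2$, uniformly over $d$ and over all $d$-regular graphs, is the part we do not know how to do.
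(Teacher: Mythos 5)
The statement you are addressing is Conjecture~\ref{c11}, which the paper explicitly leaves open; it proves only asymptotic and weakened relaxations of it (Theorems~\ref{t13} and~\ref{t15}), by quite different means (random vertex weights with $x(u)+x(v)>1$ plus a Hajnal--Szemer\'edi partition for concentration, and the Addario-Berry--Dalal--Reed degree-constrained subgraph theorem). Your text is not a proof but a programme, and you say so yourself in the last paragraph; so the honest verdict is that there is a genuine gap --- in fact several --- and they sit exactly where the difficulty of the conjecture lies.

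Concretely: (1) the feasibility of a balanced fractional $b$-factor is not established. Hoffman's condition must be verified for \emph{every} cut, and "decompose $G$ along its sparse cuts into well-connected pieces and take $b$ balanced within each piece" is not an argument --- the pieces need not have sizes divisible by $d+1$, the exceptional vertices do not obviously stay at two per class once you re-aggregate, and your own $K_{d+1}$ example shows the exactly balanced target can be infeasible, so the choice of which degrees to sacrifice is a global constraint you never resolve. (2) The rounding step is wrong as stated: a vertex of the fractional $b$-matching polytope can be half-integral on a union of odd cycles touching a constant fraction of all vertices, so pipage rounding can perturb $\deg_{H_0}(v)$ away from $b(v)$ at $\Theta(n)$ vertices, not on a "comparatively small set"; and even the asserted $o(n/(d+1))$ total perturbation (unproven) is far from the per-class budget of $2$. (3) The absorption step --- reserving a sparse subgraph guaranteeing a degree-shifting gadget between any over- and under-represented pair, uniformly over all $d$-regular $G$ including $d$ close to $n$, without disturbing the rest of the construction --- is precisely the kind of statement one would need to prove, not assume. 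Even your fallback claim that the $o(n/(d+1))$-error version "goes through by the fractional/rounding route alone" is unsubstantiated for the reasons in (1) and (2); if you want the asymptotic statement, the paper's Proposition~\ref{p24} gives a short complete argument under the hypothesis $d^3\log n=o(n)$, and proving it for all $d=o(n)$ is itself listed as an open problem.
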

\begin{conj}
\label{c12}
Every graph $G$ with $n$ vertices and minimum degree $\delta$
contains a spanning subgraph $H$ satisfying
$m(H) \leq \frac{n}{\delta+1} +2$.
\end{conj}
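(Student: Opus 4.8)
\noindent\textbf{Towards Conjecture~\ref{c12}.}\quad
The plan is to recast the statement as a question about \emph{prescribed degree sequences inside $G$}. Put $r=\lceil n/(\delta+1)\rceil$, partition $V(G)$ into $\delta+1$ classes of sizes $r$ or $r-1$, and tentatively prescribe to each vertex $v$ in the $i$-th class a target degree $t(v)=i$ for $0\le i\le\delta$. Since $t(v)\le\delta\le\deg_G(v)$ there is no vertexwise obstruction, and a spanning subgraph $H$ with $\deg_H(v)=t(v)$ for every $v$ would satisfy $m(H,k)\le r\le\frac{n}{\delta+1}+1$ for $0\le k\le\delta$ and $m(H,k)=0$ otherwise, so $m(H)\le\frac{n}{\delta+1}+1$. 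Thus everything reduces to a realizability question: can some \emph{balanced} target function $t$ --- where we are free to permute which vertex of $G$ receives which value and to perturb a few of the values --- be realized as the degree sequence of a spanning subgraph of $G$?

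I would attack realizability through the classical factor machinery. By Tutte's $f$-factor theorem (or Lov\'asz's $(g,f)$-factor theorem), a target $t$ with $0\le t(v)\le\deg_G(v)$ is realized by a spanning subgraph of $G$ if and only if for every pair of disjoint sets $S,T\subseteq V(G)$,
\[
\sum_{v\in S}t(v)\;+\;\sum_{v\in T}\bigl(\deg_{G-S}(v)-t(v)\bigr)\;-\;q_t(S,T)\ \ge\ 0,
\]
where $q_t(S,T)$ counts the components $C$ of $G-(S\cup T)$ with $\sum_{v\in C}t(v)+e_G(C,T)$ odd. The ``$+2$'' of the statement comes from two unavoidable losses: the rounding $r=\lceil n/(\delta+1)\rceil$ costs one, and the case $S=T=\emptyset$ (which forces $\sum_v t(v)$, and the target-sum of every connected component of $G$, to be even) may force a single target to change by $1$, inflating one class to size at most $r+1\le\frac{n}{\delta+1}+2$. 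Hence it suffices to produce a balanced $t$ satisfying the displayed inequality for all $S,T$.

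Verifying that inequality is where the minimum-degree hypothesis must be played off against the freedom in placing the targets, and two distinct kinds of constraint appear. The first is pure supply and demand (the regime $q_t(S,T)=0$): already in a $\delta$-regular $G$, a vertex with target $\delta$ retains all its edges in $H$, forcing every neighbour to have target at least $1$; more generally one needs $\sum_{v\in S}t(v)+\sum_{v\in T}(\deg_{G-S}(v)-t(v))\ge0$, which pushes the large targets onto a robustly connected set and the small targets onto a spread-out set. Using $\deg_{G-S}(v)\ge\delta-|S|$ one checks this whenever $|S|$ is large or $|T|$ is large, so the work is confined to moderate cuts. The second is parity: every component of $G-(S\cup T)$ has minimum degree at least $\delta-|S|-|T|$ and hence at least $\delta-|S|-|T|+1$ vertices, which limits their number, and so $q_t(S,T)$, once $|S|+|T|$ is bounded away from $\delta$; for the remaining window one wants, \emph{before} fixing $t$, to place the small targets $0,1,2,\dots$ on a well-spread (near-independent, or bounded-degeneracy) set so that no moderate cut $S\cup T$ isolates many components whose target-sums fail to cancel in parity. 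A natural implementation is to choose the partition into classes at random and then repair violations of either type by a bounded number of target swaps along $f$-factor augmenting paths.

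The crux --- and the reason Conjecture~\ref{c12} is still open --- is making this work for \emph{every} graph of minimum degree $\delta$ with only two units of slack, rather than the $o\!\left(\frac{n}{\delta+1}\right)$ slack that cruder random or greedy arguments yield. The hostile case is a graph assembled from many small dense blocks joined along a small cut, the extreme model being a disjoint union of copies of $K_{\delta+1}$, in which \emph{no} proper induced subgraph has minimum degree $\delta$; there the large targets are forced to spread almost one per block, the parities $\sum_{v\in C}t(v)$ must be arranged to cancel across blocks rather than all offending the same cut, and the supply/demand constraints forbid placing a target $\delta$ next to a target $0$ inside a block. Turning this heuristic into a proof is the delicate point, and I expect it to require an inductive ``peel one degree layer'' scheme: delete from $G$ a near-perfect matching covering a carefully chosen vertex set, recurse with $\delta$ replaced by $\delta-1$, and organise the parity bookkeeping across the $\delta$ layers so that the accumulated overshoot never exceeds two.
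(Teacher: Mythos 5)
The statement you were given is Conjecture~\ref{c12}, which the paper explicitly does \emph{not} prove: ``We have not been able to prove any of the two conjectures above.'' There is therefore no paper proof to compare your attempt against; what the paper proves are asymptotic relaxations, via random edge-selection combined with Hajnal--Szemer\'edi (Propositions~\ref{p24}--\ref{p27}), a Beck--Fiala-style rounding lemma (Lemma~\ref{l41}), and --- closest in spirit to your plan --- the Addario-Berry--Dalal--Reed degree-constrained-subgraph lemma (Lemma~\ref{l31}), which is also factor-theoretic but only pins each degree to a four-element set $\{a(v),a(v)+1,b(v),b(v)+1\}$ and hence yields the weaker $O(n/\delta)$ bounds of Theorems~\ref{t15} and~\ref{t16}. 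You recognize that the conjecture is open, and your text is honestly an attack plan, not a proof.

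On the substance, prescribing exact target degrees $t(v)$ through the Tutte/Lov\'asz $(g,f)$-factor conditions is the natural sharpening of Lemma~\ref{l31} that one would need for the $+2$ bound, and it is a genuinely different route from anything the paper carries out. But the two places you gesture at are exactly where the argument must become quantitative and currently is not. Your ``two unavoidable losses'' bookkeeping is too optimistic: the $S=T=\emptyset$ parity condition forces \emph{every} connected component of $G$ to have even target-sum, so a graph with many components (your own disjoint-$K_{\delta+1}$ example) may demand a repair in each, and nothing in the outline shows the repairs can be distributed so that no single degree class absorbs more than one extra vertex. And verifying the displayed Tutte-type inequality over all moderate $(S,T)$ with only constant slack is the heart of the matter --- ``repair by a bounded number of swaps along augmenting paths'' and ``peel one degree layer and recurse'' are names for the missing argument rather than the argument itself. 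Those are the concrete gaps; to your credit, you name them yourself.
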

If true, both conjectures are tight. One example showing it is the
vertex disjoint union of two cycles of length $4$. There are also
many
examples showing that an extra 
additive $1$ is needed even 
when $\frac{n}{d+1}$ is an integer. Indeed,  if $G$ is any
$d$-regular graph with $n$ vertices, then by the pigeonhole
principle, for any spanning subgraph $H$ of $G$,
$m(H) \geq \frac{n}{d+1}$, as the degree of each vertex of $H$ is
an integer between $0$ and $d$. If, in addition, $n$ is
divisible by $d+1$, then the equality
$m(H)=\frac{n}{d+1}$ is possible only if $m(H,k)=\frac{n}{d+1}$
for any $0 \leq k \leq d$. However, this is impossible if
$\left(\lfloor \frac{d+1}{2} \rfloor \right) \frac{n}{d+1}$ 
is odd, as  the number
of vertices of odd degree in $H$ must be even. Note that a small
value of $m(H)$ can be viewed as a measure of the irregularity of
the graph $H$. Thus both conjectures address the question of the 
existence of highly irregular subgraphs of graphs, stating that
with this interpretation every graph $G$ contains a spanning subgraph
$H$ which is nearly as irregular as the degrees of $G$ permit.

We have not been able to prove any of the two conjectures above,
but can establish the following results, showing that some natural
asymptotic versions of both do hold. In the following two
results the $o(1)$ terms tend to $0$ as $n$ tends to infinity.
\begin{theo}
\label{t13}
If $d^3 \log n =o(n)$ then any $d$-regular graph with $n$ vertices
contains a spanning subgraph $H$ so that for every $0 \leq k  \leq
d$, $m(H,k)=(1+o(1))\frac{n}{d+1}.$
\end{theo}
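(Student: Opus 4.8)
The plan is to exhibit the desired subgraph through a one-shot random construction in which the degree of every vertex is \emph{exactly} uniformly distributed on $\{0,1,\dots,d\}$, and then to prove concentration. Concretely, assign to each vertex $v$ an independent random variable $Y_v$ uniform on $[0,1]$, and let $H$ be the spanning subgraph of $G$ whose edges are precisely those $uv\in E(G)$ with $Y_u+Y_v\ge 1$. This rule is symmetric, so $H$ is well defined. Conditioning on $Y_v=y$, each of the $d$ neighbours $u$ of $v$ is joined to $v$ in $H$ independently with probability $\Pr[Y_u\ge 1-y]=y$, so $\deg_H(v)$ is $\mathrm{Bin}(d,y)$; integrating over $y\in[0,1]$ gives, for every $0\le k\le d$,
\[
\Pr[\deg_H(v)=k]\;=\;\binom{d}{k}\int_0^1 y^k(1-y)^{d-k}\,dy\;=\;\binom{d}{k}\frac{k!\,(d-k)!}{(d+1)!}\;=\;\frac{1}{d+1}.
\]
Hence $\mathbb{E}\,m(H,k)=\frac{n}{d+1}$ for each $k$, and it remains to show that each $m(H,k)$ is concentrated around this value.

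The key structural observation for the concentration step is that $H$ is a \emph{deterministic} function of the variables $(Y_v)_{v\in V(G)}$, and that the indicator $Z_v:=\mathbf{1}[\deg_H(v)=k]$ depends only on the values $Y_w$ with $w$ in the closed neighbourhood $N_G[v]$. Consequently $Z_v$ and $Z_{v'}$ are independent whenever $\mathrm{dist}_G(v,v')\ge 3$; that is, the dependency graph $\Gamma$ of the family $\{Z_v\}$ (with $v$ joined to $v'$ whenever $\mathrm{dist}_G(v,v')\le 2$) has maximum degree at most $d^2$. I would then invoke the Hajnal--Szemer\'edi theorem to partition $V(G)$ into at most $d^2+1$ independent sets of $\Gamma$, each of size $(1+o(1))\frac{n}{d^2+1}$. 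Within each class the $Z_v$ are mutually independent, so $m(H,k)$ is a sum of $O(d^2)$ independent binomial-type terms; a Chernoff bound on each class (each has mean $\Theta(n/d^3)$, which comfortably exceeds the relevant logarithmic terms precisely because $d^3\log n=o(n)$), followed by a union bound over the $\le d^2+1$ classes and the $d+1$ values of $k$, shows that with high probability $m(H,k)=(1+o(1))\frac{n}{d+1}$ for all $k$ simultaneously. In particular such an $H$ exists. (One may instead replace Hajnal--Szemer\'edi by a greedy $(d^2+1)$-colouring of $\Gamma$ and sum the per-class deviations using the Cauchy--Schwarz inequality, with the same conclusion.)

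I expect the construction itself --- the rule ``$uv\in H\iff Y_u+Y_v\ge 1$'', which is exactly what forces $\deg_H(v)$ to be uniform on $\{0,\dots,d\}$ --- to be the decisive idea, and the concentration to be the main technical obstacle. A naive application of the bounded-differences (Azuma--McDiarmid) inequality, where a single $Y_v$ can move $m(H,k)$ by $d+1$, only yields the result under the stronger hypothesis $d^4\log d=o(n)$, so one genuinely needs to exploit the \emph{sparsity} of the dependency graph rather than merely the Lipschitz constant. The remaining points are routine: verifying that the error term $\varepsilon=\varepsilon(n,d)$ produced by the union bound tends to $0$ exactly when $d^3\log n=o(n)$, and disposing of trivially small values of $d$.
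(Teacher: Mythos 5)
Your proposal is correct and follows essentially the same route as the paper: the same random subgraph rule $x(u)+x(v)>1$ giving uniform degrees (the paper's Lemma \ref{l22}), the same distance-$2$ dependency graph of maximum degree $d^2$, the same Hajnal--Szemer\'edi partition into $d^2+1$ independent classes, and the same Chernoff-plus-union-bound finish (Proposition \ref{p24}). No substantive differences to report.
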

\begin{theo}
\label{t14}
Any graph with $n$ vertices
and minimum degree $\delta$
contains a spanning subgraph $H$ satisfying
$m(H) \leq (1+o(1)) \left\lceil \frac{n}{\delta+1 } \right\rceil+2.$ 

In addition, if $\delta^{1.24} \geq n $ and 
$n$ is sufficiently large, there is such an $H$ so that
$m(H) \leq \left\lceil \frac{n}{\dd+1} \right\rceil + 2$.
\end{theo}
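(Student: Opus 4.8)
The plan is to reduce both assertions to prescribing a degree function and then realizing it as a spanning subgraph. Write $t=\lceil n/(\delta+1)\rceil$ and call $f\colon V(G)\to\{0,1,\dots,\delta\}$ \emph{equitable} if no value is attained more than $t$ times, apart from at most two vertices used to make $\sum_v f(v)$ even. Since $(\delta+1)t\ge n$, equitable functions exist. If $G$ has a spanning subgraph $H$ with $\deg_H(v)=f(v)$ for every $v$, then $m(H)\le t+2\le\lceil n/(\delta+1)\rceil+2$, which is the second assertion; for the first it suffices to produce a spanning $H$ each of whose degree values occurs at most $(1+o(1))t$ times, so there we may relax ``equitable'' to allowing $(1+o(1))t$ repetitions of each value.

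Whether a given $f$ is realizable as an $f$-factor is governed by the Tutte--Lov\'asz condition: for all disjoint $S,T\subseteq V(G)$,
\[
\sum_{v\in S}f(v)+\sum_{v\in T}\bigl(\deg_G(v)-f(v)\bigr)\ \ge\ e_G(S,T)+h(S,T),
\]
where $h(S,T)$ counts the components $C$ of $G-S-T$ with $e_G(C,T)+\sum_{v\in C}f(v)$ odd. I would verify this for a carefully chosen equitable $f$ by splitting on the sizes of $S$ and $T$. When $|S|,|T|$ are small the right side is $O(n)$ while the term $\sum_{v\in T}(\deg_G(v)-f(v))$, restricted to $v$ with $f(v)<\deg_G(v)$, is already comparable — here one exploits that $f$ may be chosen so that the few vertices with $f(v)=\deg_G(v)$ (which cannot be avoided entirely when $G$ is $\delta$-regular, since we do want some vertices of degree $\delta$) are essentially kept out of small sets. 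When $|S|,|T|$ are large, only $O(t)$ vertices have $f(v)$ within any fixed constant of $0$ or of $\deg_G(v)$, so the left side is $\Omega(\delta\min(|S|,|T|))$ up to an $O(t\delta)$ error, and this beats $e_G(S,T)+h(S,T)\le |S|\,|T|+n$ exactly when $\delta$ is polynomially large; tuning the thresholds of this comparison is what produces the exponent $1.24$ and hence the second assertion. (One could hope instead for an explicit ``ordered half-graph'' construction inside $G$, modelled on the extremal example $K_{\delta,m}$, but I do not see how to realize it in an arbitrary dense graph, and a bipartite reduction would halve $\delta$ and so double $t$.)

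For the first assertion, for $\delta$ too small for the above I would switch to a probabilistic construction: take i.i.d.\ weights $x_v\sim U[0,1]$ and let $H$ keep the edges $uv$ with $x_u+x_v\ge1$. Given $x_v=x$, $\deg_H(v)\sim\mathrm{Bin}(\deg_G(v),x)$, so a one-line Beta integral yields $\Pr[\deg_H(v)=k]=1/(\deg_G(v)+1)$ for every $k\le\deg_G(v)$, and therefore $\mathbb{E}[m(H,k)]=\sum_v 1/(\deg_G(v)+1)\le n/(\delta+1)\le t$ for all $k$. Since each $x_v$ influences $m(\cdot,k)$ through at most $\deg_G(v)+1$ vertices, McDiarmid's inequality and a union bound over the (at most $\Delta(G)+1$) values of $k$ give $m(H)\le(1+o(1))t$ provided $\Delta(G)$, hence $\delta$, is small enough; this cannot reach the dense range, since for $G=K_n$ the same $H$ is essentially $n$ balls in $n$ bins and has $m(H)=\Theta(\log n/\log\log n)$. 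The intermediate values of $\delta$ are handled by rerunning the $f$-factor argument with each value of $f$ allowed $(1+o(1))t$ times; this extra budget relaxes the Tutte--Lov\'asz condition enough to cover all $\delta$ above the probabilistic range, so the two regimes meet. The ceiling and the additive $2$ in both statements absorb the non-divisibility of $n$ by $\delta+1$ and the parity obstruction of $C_4\cup C_4$.

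The step I expect to be hardest is the verification of the Tutte--Lov\'asz condition in the dense regime: controlling the odd-component count $h(S,T)$ and the vertices with $f(v)=\deg_G(v)$ while keeping $f$ equitable demands tight estimates, and $\delta^{1.24}\ge n$ is essentially the weakest hypothesis under which they close. For the first assertion the corresponding difficulty is quantitative, namely ensuring that the concentration regime of the random construction and the range of the relaxed density argument overlap, leaving no value of $\delta$ uncovered.
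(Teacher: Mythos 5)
Your plan for the dense regime --- prescribe an equitable degree function $f$ and realize it as an $f$-factor via the Tutte--Lov\'asz condition --- has a genuine gap: the condition does not hold for an arbitrary equitable $f$, and choosing $f$ adapted to the structure of $G$ is precisely the difficulty the proposal leaves unaddressed. Take $G=K_{n/2,n/2}$ with sides $S$ and $T$, so $\delta=n/2$. The Tutte--Lov\'asz inequality applied to the pair $(S,T)$ reads $\sum_{v\in S}f(v)+\sum_{v\in T}(\tfrac n2-f(v))\ge e_G(S,T)=\tfrac{n^2}{4}$, i.e.\ $\sum_{S}f\ge\sum_{T}f+h(S,T)$, and applying it to $(T,S)$ gives the reverse inequality; so realizability forces $\sum_S f=\sum_T f$ (this is just the bipartite degree-sum constraint), and beyond that the Gale--Ryser majorization conditions. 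An equitable $f$ chosen without reference to which vertices lie on which side will generically violate this, and your size-based case analysis (``the left side is $\Omega(\delta\min(|S|,|T|))$ up to an $O(t\delta)$ error'') cannot be right as stated, since in this example both sides of the inequality are exactly $\tfrac{n^2}{4}$ up to the distribution of $f$. Relatedly, the exponent $1.24$ cannot fall out of a comparison of $\delta\min(|S|,|T|)$ against $|S||T|+n$; in the paper it arises for an entirely different reason (one needs $\delta^{1+\epsilon}\gtrsim n\log n\log\log n$ with $\epsilon<1/4$). The paper avoids factor theory altogether: it randomly partitions $V$ into classes, assigns $0/1$ edge weights in three stages so that the \emph{target} degree of each vertex is determined by its own class and its degree in $G$ (hence automatically achievable by local deletions of designated ``removable'' edges), and finishes with a Beck--Fiala-type rounding lemma (Lemma \ref{l41}) plus a Kalkowski--Karo\'nski--Pfender-style sequential adjustment inside the small part.

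The sparse regime of your proposal is essentially the paper's (same random subgraph $x_u+x_v>1$, same exact distribution $1/(\deg_G(v)+1)$), but McDiarmid with Lipschitz constants $\deg_G(v)+1$ gives deviation of order $\sqrt{n}\,\Delta\sqrt{\log n}$, which is $o(n/\delta)$ only for roughly $\delta\le n^{1/6}$ even after truncating $\Delta$ down to $O(\delta^2)$; the paper instead uses Hajnal--Szemer\'edi to get independence within color classes and reaches $\delta^4=o(n/\log n)$. Either way, your two regimes do not meet: nothing in the proposal handles, say, $\delta=n^{1/2}$, and the assertion that ``the extra $(1+o(1))t$ budget relaxes the Tutte--Lov\'asz condition enough to cover all $\delta$ above the probabilistic range'' is not supported by any estimate. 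Covering exactly this intermediate range is what all of Section \ref{sec:proofThm1.4} of the paper is for, so the proposal as written does not yield either assertion of the theorem.
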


For any values of $d$ or $\delta$ and $n$, without the assumption
that $n$ is sufficiently large,
we can prove a weaker universal bound showing that there is always 
a spanning subgraph
$H$ with $m(H) $ bounded by $O(n/\delta)$. 
\begin{theo}
\label{t15}
Any $d$-regular graph $G$ with $n$ vertices 
contains a spanning subgraph $H$ satisfying
$m(H) \leq 8\frac{n}{d} +2$.
\end{theo}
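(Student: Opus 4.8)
The plan is to break the range of $d$ into pieces and use a different tool on each; the constant $8$ -- as opposed to the $1+o(1)$ in Theorems~\ref{t13}--\ref{t14} -- is the price one pays for a statement with no relation imposed between $d$ and $n$. First note that if $d\le 8$ then $8n/d+2\ge n$ and there is nothing to prove, since $m(H)\le n$ always; and whenever $d^{3}\log n=o(n)$ one may simply quote Theorem~\ref{t13}, which yields $H$ with $m(H)=(1+o(1))\frac n{d+1}\le 8\frac nd+2$ (in this regime $n$ is automatically large enough to absorb the $o(1)$).

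For $d$ up to a small power of $n$ a self-contained probabilistic argument also works and is instructive. Assign each vertex $v$ an independent uniform $\theta_v\in[0,1]$ and let $H$ contain exactly the edges $uv$ with $\theta_u+\theta_v\le 1$. Given $\theta_v=t$, each neighbour of $v$ is kept independently with probability $1-t$, so $\deg_H(v)\sim\mathrm{Bin}(d,1-t)$; integrating over $t$ gives $\Pr[\deg_H(v)=k]=\binom{d}{k}\frac{k!(d-k)!}{(d+1)!}=\frac1{d+1}$ for every $0\le k\le d$, hence $\mathbb{E}\,m(H,k)=\frac n{d+1}$ for each $k$. Now $m(H,k)=\sum_v\mathbf{1}[\deg_H(v)=k]$ is a function of the independent $\theta_v$; changing one $\theta_v$ alters only $\deg_H(v)$ and $\deg_H(u)$ for $u\sim v$, each such $\deg_H(u)$ moving by at most $1$, so a neighbour's indicator flips only when its degree is within $1$ of $k$ -- which for typical $\theta$'s happens for $O(\sqrt d)$ of the $d$ neighbours. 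An Azuma/Talagrand-type bound with effective steps $O(\sqrt d)$ then makes $\Pr[m(H,k)>8n/d]$ exponentially small once $d^{3}\log n=o(n)$, and a union bound over the $d+1$ values of $k$ finishes this case.

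When $d$ is large the product-measure construction has fluctuations of order $\sqrt d$ in every degree and no longer concentrates, so one must pin down the degree sequence first and only then realise it inside $G$. If $d^{1.24}\ge n$ this is precisely the situation of the second assertion of Theorem~\ref{t14} (with $\delta=d$), which gives $m(H)\le\lceil n/(d+1)\rceil+2$. In the intermediate band one picks target degrees $t_v$ spread over an interval of length of order $d$ centred at $d/2$, so that each value of $\{0,\dots,d\}$ is the target of at most $8n/d$ vertices, and realises them up to a bounded additive error $c$ as a degree-constrained subgraph, via the $(g,f)$-factor theorem of Lov\'asz applied with $g(v)=t_v-c$, $f(v)=t_v+c$. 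The factor inequalities -- $\sum_{v\in S}f(v)+\sum_{v\in T}(d-g(v))\ge e_G(S,T)+q(S,T)$ for all disjoint $S,T$, where $q$ is a parity term -- hold with room to spare when $G$ expands, so for expanders a suitable target assignment works; if $G$ has a sparse cut one splits $G$ along it and treats the two (nearly regular) sides separately.

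The crux is this last regime, namely the tension between spreading the degrees and keeping them realisable. The $(g,f)$-factor inequality fails exactly when a set of low-target vertices is heavily joined to a set of high-target vertices -- the computation $S=A$, $T=B$ in $K_{d,d}$ is the model failure -- and such configurations are unavoidable in expanders and near-complete graphs, so a uniformly random target assignment will not do: one must balance the assignment across every sparse cut, or, equivalently, carry the expander-versus-sparse-cut dichotomy through the whole argument while never letting a target class exceed $8n/d$. The probabilistic parts, by contrast, are routine once the conditioning on the $\theta_v$'s is arranged.
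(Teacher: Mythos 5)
Your decomposition leaves the theorem unproved in exactly the regime you yourself call ``the crux,'' and it also cannot deliver the universal statement. Theorem \ref{t15} is asserted for \emph{all} $n$ and $d$ --- the paper introduces it precisely as a bound that holds ``without the assumption that $n$ is sufficiently large'' --- so the cases you dispose of by citing Theorems \ref{t13} and \ref{t14} are only settled for large $n$; for small or moderate $n$ with $d>8$ you have nothing. More seriously, in the intermediate band you propose spreading target degrees over an interval of length of order $d$ around $d/2$ and realising them via a $(g,f)$-factor theorem, and you then correctly observe that the factor inequalities fail for targets far from $d/2$ (the $K_{d,d}$ computation) and that one would have to balance the assignment across every sparse cut and carry an expander-versus-sparse-cut dichotomy through the whole argument. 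That dichotomy is never carried out, so this part of the proposal is a description of an obstacle rather than a proof.

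The paper avoids all of this with a single deterministic tool, Lemma \ref{l31} (Addario-Berry--Dalal--Reed): for any integers $a(v)\le\lfloor d/2\rfloor\le b(v)<d$ with $b(v)\le\frac{d+a(v)}{2}+1$ and $b(v)\le 2a(v)+3$, there is a spanning subgraph $H$ with $\deg_H(v)\in\{a(v),a(v)+1,b(v),b(v)+1\}$ for every $v$. The constraints force the targets to sit within roughly $d/4$ of $d/2$ --- precisely the restriction your factor-theorem analysis runs into --- but the lemma compensates by giving each vertex \emph{two} admissible windows, $\{a,a+1\}$ and $\{b,b+1\}$, separated by a fixed gap. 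Splitting $V$ arbitrarily into $k=\lceil d/4\rceil$ classes of size at most $\lceil n/k\rceil$ and setting $a(v)=\lceil d/2\rceil-i$, $b(v)=\lceil d/2\rceil+k-i$ for $v\in V_i$ makes every integer lie in at most two of the sets $\{a_i,a_i+1,b_i,b_i+1\}$, giving $m(H)\le 2\lceil n/k\rceil<8n/d+2$ in one stroke, with no probability and no case split beyond the trivial $d\le 8$. To salvage your approach you would need either to prove an [ADR]-type lemma yourself or to actually complete the expander/sparse-cut argument uniformly in $n$; as written, the proposal does not establish the theorem.
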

\begin{theo}
\label{t16}
Any graph $G$ with $n$ vertices and minimum degree 
$\delta$ 
contains a spanning subgraph $H$ satisfying
$m(H) \leq 16\frac{n}{\delta} +4$.
\end{theo}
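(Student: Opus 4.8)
The plan is to produce $H$ by a single random edge‑sparsification of $G$ and then control, for every integer $k$, the number $m(H,k)$ of vertices of $H$‑degree $k$.

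\textbf{Construction and first moment.} We may assume $\delta\ge 17$, since for $\delta\le 16$ the bound $16n/\delta+4>n\ge m(G)$ is trivially attained by $H=G$. Choose independent uniform reals $y_v\in[0,1]$ for $v\in V(G)$, and let $H$ consist of all edges $uv\in E(G)$ with $y_u+y_v\ge 1$. Conditioned on $y_v=a$, the edges at $v$ are kept independently, each with probability $a$, so $\deg_H(v)$ is binomial with parameters $\deg_G(v)$ and $a$; integrating over $a$ gives, for every $k$ with $0\le k\le \deg_G(v)$,
\[
\Pr[\deg_H(v)=k]=\binom{\deg_G(v)}{k}\int_0^1 a^k(1-a)^{\deg_G(v)-k}\,da=\frac{1}{\deg_G(v)+1}.
\]
Hence $\mathbb{E}[m(H,k)]=\sum_{v:\,\deg_G(v)\ge k}\frac{1}{\deg_G(v)+1}\le\frac{n}{\delta+1}$ for every $k\ge 0$. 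Also, deterministically, $m(H,k)\le|\{v:\deg_G(v)\ge k\}|\le 2|E(G)|/k$, so for every $k\ge k_0:=\lceil \delta|E(G)|/(8n)\rceil$ we already have $m(H,k)\le 16n/\delta$. Thus it remains only to handle the at most $k_0$ values $k<k_0$.

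\textbf{Concentration and union bound.} For these $k$ I would prove $\Pr\bigl[m(H,k)>16n/\delta+4\bigr]<1/k_0$ and take a union bound. Writing $m(H,k)=\sum_v\mathbf 1[\deg_H(v)=k]$, the summands are not independent, but each coordinate $y_v$ influences only the indicator of $v$ and of the neighbours of $v$, so $m(H,k)$ is a sum of weakly (locally) dependent bounded variables with mean at most $n/(\delta+1)$; the target is a Chernoff‑type upper tail $\Pr[m(H,k)>16\,\mathbb{E}[m(H,k)]]\le\exp(-c\,n/\delta)$ with an absolute constant $c>0$. To get it I would first set aside the set $W$ of vertices of unusually large $G$‑degree — few in number, $|W|=O(\delta|E(G)|/n)$, but whose $y$‑values have large influence — condition on $(y_v)_{v\in W}$ and bound their contribution to $m(H,k)$ crudely, and on the remaining coordinates exploit the bounded local dependence (via a martingale revealed in a carefully chosen order, or via a direct higher‑moment computation) to get the exponential tail. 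Combining the two regimes of $k$ yields $m(H)\le 16n/\delta+4$ with positive probability, which is Theorem~\ref{t16}.

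\textbf{Main obstacle.} The crux is exactly this concentration step with the \emph{right} constant. Unlike the regular case (Theorem~\ref{t15}), where only $d+1$ degrees are relevant, here there can be $\Omega(\delta n)$ values of $k$, so a Chernoff‑strength tail is genuinely needed; and the vertices of high $G$‑degree make a naive bounded‑differences estimate hopelessly lossy, which is why their $y$‑values must be frozen first and their effect controlled by hand. Pushing the multiplicative constant down to $16$ (with additive $+4$, which leaves essentially no room to spare) is the delicate point. Moreover the tail $\exp(-c\,n/\delta)$ is only useful while $\delta=O(n/\log n)$, so the very dense range $\delta=\Omega(n/\log n)$ has to be treated separately — there $G$ is close to complete and one can build an almost‑irregular spanning subgraph essentially by hand, in the spirit of the second part of Theorem~\ref{t14}.
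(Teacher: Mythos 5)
Your construction and the computation $\Pr[\deg_H(v)=k]=1/(\deg_G(v)+1)$ are fine (this is Lemma \ref{l22} of the paper), but the argument has a genuine gap at exactly the point you flag as the ``main obstacle'': the concentration step is not carried out, and the route you propose for it does not work. For the union bound you need something like $\Pr[m(H,k)>16n/\delta+4]\le\exp(-c\,n/\delta)$, but every single coordinate $y_v$ influences $m(H,k)$ by at least $1$ (it can flip the indicator of $v$ itself), so any bounded-differences/Azuma estimate has $\sum_v c_v^2\ge n$ and, for a deviation of order $n/\delta$, yields at best $\exp(-O(n/\delta^2))$ --- far from $\exp(-c\,n/\delta)$ and useless once $\delta\gtrsim\sqrt{n/\log n}$. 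Freezing the high-degree vertices does not remove this: the remaining vertices still each have influence at least $1$. The only way the paper extracts genuine independence from this construction is the Hajnal--Szemer\'edi partition into distance-$2$ independent classes (Propositions \ref{p24}--\ref{p27}), and that forces hypotheses like $\delta^4=o(n/\log n)$. Moreover, Theorem \ref{t16} is a universal statement for all $n$ and $\delta$, with no ``$n$ sufficiently large'' assumption, so even a successful large-deviation argument plus a separate dense regime (where, contrary to your remark, minimum degree $\Omega(n/\log n)$ does not make $G$ close to complete, and the second part of Theorem \ref{t14} is unavailable since it needs $n$ large) would not prove the theorem as stated.

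The paper's proof is entirely different and deterministic. It orders the vertices by degree, cuts them into consecutive blocks of size $k=\lceil\delta/4\rceil$, assigns to the $i$-th vertex of each block (with degree $f_i$) the targets $a_i=\lceil f_i/2\rceil-i$ and $b_i=\lceil f_i/2\rceil+k-i$, and invokes the Addario-Berry--Dalal--Reed lemma (Lemma \ref{l31}) to obtain a spanning subgraph in which each such vertex has degree in $\{a_i,a_i+1,b_i,b_i+1\}$. Monotonicity of the $f_i$ within a block makes the four sequences strictly decreasing, so each integer is hit at most $4$ times per block, giving $m(H)\le 4\lceil n/k\rceil<16n/\delta+4$. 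To salvage your probabilistic route you would need a concentration tool substantially stronger than a martingale or moment bound, and you would still have to handle small $n$ and all $\delta$ unconditionally.
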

We can improve the constants $8$ and $16$ above by a 
more complicated argument,
but since it is clear that these improved constants are not tight we
prefer to present the shorter  proofs of the results above.

Our proofs  combine
some of the ideas used in the earlier
work on the so called irregularity strength of graphs
with techniques from discrepancy theory.
The {\it irregularity strength} $s(G)$ of a graph $G$ with at most
one isolated vertex and no isolated edges 
is the smallest integer
$s$ so that one can assign a positive integer weight between $1$
and $s$ to each edge of $G$ so that for any two distinct vertices $u$
and $v$, the sum of weights of all edges incident with $u$ differs
from the sum of weights of all edges incident with $v$. This notion
was introduced in the 80s in \cite{CJLORS}.
Faudree and Lehel conjectured in \cite{FL}  that 
there exists an absolute constant $C$ so that for every $d$-regular graph
$G$ with $n$ vertices, where $d \geq 2$, $s(G)\leq \frac{n}{d}+C$. The
notion of irregularity strength and in particular the Faudree-Lehel
conjecture
received a considerable amount of attention, 
see e.g. \cite{Le, FGKP, CL, Pr1,Pr2, KKP, MP, Pr3}. 
The theorems above improve some of the results in these 
papers. In particular, Theorems \ref{t13} and \ref{t14} improve 
a result of \cite{FGKP} which implies that any $d$-regular graph
with $n$ vertices contains a spanning subgraph $H$ 
satisfying $m(H) \leq 2n/d$ provided $d^4 \log n \leq n$.
(The result there is stated in terms of assigning weights $1$ and $2$ to
edges, for regular graphs this is equivalent).

Theorems \ref{t14}, \ref{t15} and \ref{t16} 
improve another result of \cite{FGKP}
which implies that any $d$-regular graph with $n \geq 10$ vertices,
where $d \geq 10 \log n$, contains
a spanning subgraph $H$ with $m(H) \leq 48 \log n
\frac{n}{d}$,
as well as a result that for all sufficiently large $n$ any 
$d$-regular 
$G$ contains a spanning $H$ with 
$m(H) \leq 2\frac{n}{\sqrt d}$. They also strengthen a result
in \cite{CL} that shows that any $d$-regular graph
on $n$ vertices contains a spanning
$H$ in which the number of vertices with degrees in any interval
of length  $c_1 \log n$ does not exceed 
$c_2 n \log n/d$ where $c_2>c_1$ are some absolute constants.

Our final results demonstrate a direct 
connection between the irregularity strength of graphs and our
problem here.
\begin{theo}
\label{t17}
Let $G$ 
be a bipartite graph and let $s=s(G)$ be its
irregularity strength. Then $G$ contains a spanning subgraph
$H$ satisfying $m(H) \leq 2s-1$.  If $G$ is regular this can
be improved to $m(H) \leq 2s-3$.
\end{theo}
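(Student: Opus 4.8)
The plan is to convert an optimal irregular edge-weighting of $G$ into a fractional spanning subgraph whose per-vertex edge-sums are distinct rationals, and then to round that fractional subgraph to a genuine one using the total unimodularity of the vertex--edge incidence matrix of a bipartite graph; the bound on $m(H)$ then falls out of an elementary window argument.

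In detail, fix a weighting $w\colon E(G)\to\{1,\dots,s\}$ witnessing $s=s(G)$, so the weighted degrees $w(v):=\sum_{e\ni v}w(e)$ are pairwise distinct integers. Put $x_e:=w(e)/s\in(0,1]$, so that $\sum_{e\ni v}x_e=w(v)/s$ for each $v$, and these rationals are pairwise distinct. Consider the polytope
\[
P=\Bigl\{z\in[0,1]^{E(G)}:\ \bigl\lfloor\tfrac{w(v)}{s}\bigr\rfloor\ \le\ \sum_{e\ni v}z_e\ \le\ \bigl\lceil\tfrac{w(v)}{s}\bigr\rceil\ \ \text{for every }v\Bigr\}.
\]
It is nonempty because $x\in P$; since $G$ is bipartite its incidence matrix is totally unimodular, and $P$ is cut out by that matrix together with integer bounds, so $P$ is an integral polytope. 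Pick an integral point $y\in P$ and let $H$ be the spanning subgraph with edge set $\{e:y_e=1\}$, so that $d_H(v)\in\{\lfloor w(v)/s\rfloor,\ \lceil w(v)/s\rceil\}$ for every $v$. (One can equally well produce $H$ by the classical bipartite rounding argument: repeatedly push a common amount $\pm t$ alternately along a cycle, necessarily even, or along a maximal path of the subgraph of strictly fractional edges, increasing $t$ until some edge becomes integral; this is exactly where bipartiteness enters.)

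Now fix a degree value $k$. If $d_H(v)=k$ then either $k=\lfloor w(v)/s\rfloor$, which forces $ks\le w(v)\le (k+1)s-1$, or $k=\lceil w(v)/s\rceil$, which forces $(k-1)s+1\le w(v)\le ks$; in either case $w(v)$ is one of the $2s-1$ integers in the interval $[(k-1)s+1,\,(k+1)s-1]$. Since the numbers $w(v)$ are distinct, at most $2s-1$ vertices have degree $k$ in $H$, and therefore $m(H)\le 2s-1$.

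When $G$ is regular, say $d$-regular with $d\ge 2$ (a $d$-regular graph on at least two vertices has repeated degrees, so $s\ge 2$; the cases $d\le 1$ are trivial or fall outside the definition of $s(G)$), I would instead use the rescaled weights $x_e:=(w(e)-1)/(s-1)\in[0,1]$. Then $\sum_{e\ni v}x_e=(w(v)-d)/(s-1)$, and because every vertex has the same degree $d$ in $G$ these values are still pairwise distinct --- this constancy is precisely why the improvement is restricted to regular graphs. Rounding in the analogous polytope gives a spanning $H$ with $d_H(v)\in\{\lfloor(w(v)-d)/(s-1)\rfloor,\ \lceil(w(v)-d)/(s-1)\rceil\}$, and repeating the window computation with $s-1$ and $w(v)-d$ in place of $s$ and $w(v)$ shows that $d_H(v)=k$ pins $w(v)-d$ to a set of $2(s-1)-1=2s-3$ integers, whence $m(H)\le 2s-3$. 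The argument is short, and the only steps needing care are the totally-unimodular rounding (where the bipartiteness hypothesis is spent) and, in the regular case, the simple observation that subtracting the constant degree $d$ preserves distinctness of the vertex-sums.
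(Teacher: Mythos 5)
Your proposal is correct and follows essentially the same route as the paper: rescale an optimal irregular weighting by $s$ (or by $s-1$ after subtracting $1$ in the regular case), round within the floor/ceiling polytope using total unimodularity of the bipartite incidence matrix, and count the at most $2s-1$ (resp.\ $2s-3$) integer values of $\sum_{e\ni v}w(e)$ compatible with a given degree $k$. The only cosmetic difference is that you phrase the final count via the two cases $k=\lfloor\cdot\rfloor$ and $k=\lceil\cdot\rceil$ rather than the paper's single strict inequality $s(k-1)<\sum_{e\ni v}w(e)<s(k+1)$, which yields the same window.
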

A similar result, with a somewhat more complicated proof,
holds without the assumption that $G$ 
is bipartite.
\begin{theo}
\label{t18}
Let $G$ 
be a graph and let $s=s(G)$ be its
irregularity strength. Then $G$ contains a spanning subgraph
$H$ satisfying $m(H) \leq 2s$.  If $G$ is regular this can
be improved to $m(H) \leq 2s-2$.
\end{theo}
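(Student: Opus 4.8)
The plan is to turn an optimal irregularity-strength weighting of $G$ into a fractional spanning subgraph and then round it, keeping tight control on how far each vertex degree can move; this is exactly the spirit of the proof of Theorem~\ref{t17}, and the one new ingredient needed beyond the bipartite case is a way to round around odd cycles. Fix a weighting $w\colon E(G)\to\{1,\dots,s\}$ witnessing $s=s(G)$, and write $d_w(v)=\sum_{e\ni v}w(e)$, so that by assumption the integers $d_w(v)$ are pairwise distinct. Set $x_e=w(e)/s\in(0,1]$, so $\sum_{e\ni v}x_e=d_w(v)/s$ for every $v$. I would work with the polytope
\[
P=\left\{\,y\in[0,1]^{E(G)}:\ \left\lfloor\frac{d_w(v)}{s}\right\rfloor\le\sum_{e\ni v}y_e\le\left\lfloor\frac{d_w(v)}{s}\right\rfloor+1\ \text{ for all }v\,\right\},
\]
which contains $x$, and try to produce an integral point of $P$; this is the heart of the matter. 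Take an extreme point $y^{*}$ of $P$ and let $F=\{e:0<y^{*}_e<1\}$ be its fractional support. The box constraints are slack on $F$, so $y^{*}$ is pinned down on $F$ only by tight degree constraints; since the vertex-edge incidence matrix of a graph has full column rank only on subgraphs all of whose components are forests or unicyclic with an odd cycle, and since a fractional tree component cannot be pinned down in this way (each of its at least two leaves has a non-integral, hence non-tight, partial sum, while $P$'s bounds are integers), every component of $F$ must be unicyclic with an odd cycle.

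Each such component I would round in two steps. First, peel off pendant-tree edges from the leaves inward: when $v$ is a current leaf its partial sum is forced to equal $\lfloor d_w(v)/s\rfloor$ plus the current value of its unique fractional edge, so rounding that edge either way keeps $v$ inside its width-one window, and at least one of the two choices also keeps the parent of $v$ inside its window; iterate. Second, on the odd cycle that remains, each vertex is now constrained either to take at least one of its two cycle edges (``up'') or at most one (``down''), according to its current partial sum, and a short check shows every odd cycle carries an edge subset meeting every up-vertex and hitting no down-vertex twice; choosing such a subset completes the rounding. The outcome is a spanning subgraph $H$ with $\lfloor d_w(v)/s\rfloor\le\deg_H(v)\le\lfloor d_w(v)/s\rfloor+1$ for every $v$. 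Hence if $\deg_H(v)=k$ then $d_w(v)$ lies in the half-open interval $[\,s(k-1),\,s(k+1)\,)$, which contains exactly $2s$ integers; since the $d_w(v)$ are distinct, $m(H,k)\le 2s$ for all $k$, i.e.\ $m(H)\le 2s$. In the bipartite case there are no odd cycles, the window may be taken to be the two-element set $\{\lfloor d_w(v)/s\rfloor,\lceil d_w(v)/s\rceil\}$, the interval above becomes open, and one gets $m(H)\le 2s-1$, recovering Theorem~\ref{t17}.

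For $d$-regular $G$ one additionally has $d\le d_w(v)\le sd$, which already empties the extreme degree classes $k=0$ and $k=d$; to remove two more I would run the rounding so as to send each vertex to the nearer of the two integers in its window whenever the feasibility constraints allow it, so that no block of $s$ consecutive $d_w$-values rounds ``monochromatically'', bringing every $m(H,k)$ down to at most $2s-2$. The main obstacle throughout is precisely the non-bipartiteness: odd cycles are what prevents rounding $x$ with per-vertex degree discrepancy strictly below $1$, and keeping that discrepancy under control --- via the extreme-point structure together with the odd-cycle rounding above --- is what makes the argument ``somewhat more complicated'' than the proof of Theorem~\ref{t17}, with the regular refinement being the most delicate point.
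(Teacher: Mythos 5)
Your argument for the main bound $m(H)\le 2s$ is sound, and it reaches the key rounding step by a genuinely different route from the paper. The paper isolates the rounding into a standalone Beck--Fiala-type statement (Lemma~\ref{l41}), proved by an iterative ``floating variable'' algorithm that repeatedly moves along kernel directions of the incidence matrix until only odd cycles and isolated edges remain fractional; you instead pass to an extreme point of the degree-constrained polytope $P$ and read off the structure of its fractional support. The two arguments are morally the same (both identify odd cycles as the sole obstruction to exact rounding), but the paper's formulation has the advantage of being reusable --- Lemma~\ref{l41} is applied again in the proof of Theorem~\ref{t14} --- while yours is self-contained for this application and gives the slightly cleaner per-vertex guarantee $\deg_H(v)\in\{\lfloor d_w(v)/s\rfloor,\lfloor d_w(v)/s\rfloor+1\}$; either guarantee yields the same count of $2s$ admissible values of $d_w(v)$ per degree class. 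One small tightening: at an extreme point of $P$ every fractional component is in fact a bare odd cycle with all values equal to $1/2$ (a degree-one vertex of the fractional support has a non-integral, hence non-tight, partial sum, so a unicyclic component with a pendant edge cannot supply enough tight rows), so your pendant-tree peeling is vacuous; it is harmless, and your up/down rounding of the odd cycle is correct.

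The regular case is where you have a genuine gap. Observing that the classes $k=0$ and $k=d$ are empty does nothing for $m(H)=\max_k m(H,k)$, and the plan to ``round to the nearer integer whenever feasible'' so that ``no block of $s$ consecutive $d_w$-values rounds monochromatically'' is not a proof: feasibility is dictated by the global odd-cycle constraints, and you give no mechanism forcing at least two of the $2s$ candidate values in $[s(k-1),s(k+1))$ to escape degree class $k$. The paper's fix is a one-line reduction, not a refinement of the rounding: for regular $G$ replace $w(e)$ by $w(e)-1$ and $s$ by $s-1$. The new weights lie in $\{0,\dots,s-1\}$, the vertex sums all drop by the common value $d$ and so remain pairwise distinct, and running your entire argument with $z(e)=(w(e)-1)/(s-1)$ shrinks the admissible interval for each degree class to length $2(s-1)$, giving $m(H)\le 2s-2$ immediately.
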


The rest of the paper contains the proofs as well as a brief final
section suggesting natural versions of the two conjectures that
may be simpler.

\section{Proof of Theorem \ref{t13} and a special case of Theorem 
\ref{t14}}
In this section we prove  Theorem \ref{t13} and describe also a
short proof of Theorem \ref{t14} for the special case that the 
minimum degree $\delta$ satisfies
$\delta^4 =o (n/ \log n)$. The proof of the theorem for larger
$\delta$ requires more work, 
and is presented in Section \ref{sec:proofThm1.4}.

We need several combinatorial and probabilistic lemmas. The first
is the standard estimate of Chernoff for Binomial distributions.
\begin{lemma}[Chernoff's Inequality, c.f., e.g., 
\cite{AS}, Appendix A]
\label{l21}
Let $B(m,p)$ denote the Binomial random variable with parameters
$m$ and $p$, that is, the sum of $m$ independent, identically
distributed Bernoulli random variables, each being $1$ with
probability $p$ and $0$ with probability $1-p$. Then
for every $0<a \leq mp$, 
$\mbox{Prob}(X-mp \geq a) \leq e^{-a^2/3mp}$ and
$\mbox{Prob}(|X-mp| \geq a) \leq 2e^{-a^2/3mp}$.
If $a \geq mp$ then 
$\mbox{Prob}(|X-mp| \geq a) \leq 2e^{-a/3}$.
\end{lemma}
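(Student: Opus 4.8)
\medskip
\noindent\textbf{Proof plan.}
This is the classical Chernoff bound, so I would only recall the standard exponential-moment argument. The plan is as follows. First I would write $X=\sum_{i=1}^m Y_i$ with the $Y_i$ i.i.d.\ Bernoulli$(p)$, and for a parameter $\lambda>0$ apply Markov's inequality to the nonnegative variable $e^{\lambda X}$:
$\mbox{Prob}(X\geq mp+a)=\mbox{Prob}(e^{\lambda X}\geq e^{\lambda(mp+a)})\leq e^{-\lambda(mp+a)}\,\mathbb{E}[e^{\lambda X}]$.
By independence $\mathbb{E}[e^{\lambda X}]=(1-p+pe^{\lambda})^m$, and since $1+x\leq e^{x}$ this is at most $\exp\!\big(mp(e^{\lambda}-1)\big)$. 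Hence for every $\lambda>0$,
\[
\mbox{Prob}(X\geq mp+a)\leq \exp\!\big(mp(e^{\lambda}-1)-\lambda(mp+a)\big).
\]

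Next I would optimize over $\lambda$. Setting $\lambda=\ln\!\big(1+\tfrac{a}{mp}\big)$ makes the exponent equal to $-mp\,h\!\big(\tfrac{a}{mp}\big)$, where $h(t)=(1+t)\ln(1+t)-t$. The remaining work is elementary calculus: one checks that $h(t)\geq t^2/3$ for $0\leq t\leq 1$ and $h(t)\geq t/3$ for $t\geq 1$. Plugging in $t=a/(mp)$ then gives $\mbox{Prob}(X-mp\geq a)\leq e^{-a^2/(3mp)}$ when $0<a\leq mp$, and $\mbox{Prob}(X-mp\geq a)\leq e^{-a/3}$ when $a\geq mp$; in the latter regime one may alternatively skip the calculus and just fix $\lambda$ at a suitable constant.

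For the two-sided statements I would repeat the argument for the lower tail with $e^{-\lambda X}$: for $\lambda>0$, $\mbox{Prob}(X\leq mp-a)\leq e^{\lambda(mp-a)}\mathbb{E}[e^{-\lambda X}]\leq \exp\!\big(mp(e^{-\lambda}-1)+\lambda(mp-a)\big)$. When $a<mp$, taking $\lambda=-\ln(1-\tfrac{a}{mp})$ bounds this by $\exp(-mp\,g(\tfrac{a}{mp}))$ with $g(t)=(1-t)\ln(1-t)+t\geq t^2/2\geq t^2/3$; when $a\geq mp$ the event $\{X\leq mp-a\}$ is empty since $X\geq 0$. A union bound over the two tails then yields $\mbox{Prob}(|X-mp|\geq a)\leq 2e^{-a^2/(3mp)}$ for $0<a\leq mp$ and $\mbox{Prob}(|X-mp|\geq a)\leq 2e^{-a/3}$ for $a\geq mp$, which are exactly the claimed estimates.

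There is no genuine obstacle in this argument; the only mildly fiddly point is verifying the three elementary inequalities $h(t)\geq t^2/3$ on $[0,1]$, $h(t)\geq t/3$ on $[1,\infty)$, and $g(t)\geq t^2/2$ on $[0,1)$ that convert the sharp Cram\'er exponents into the clean closed forms in the statement. A complete write-up of all of this can be found in \cite{AS}, Appendix A.
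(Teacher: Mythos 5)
Your argument is the standard exponential-moment (Cram\'er--Chernoff) derivation and is correct; the constants check out since $(1+t)\ln(1+t)-t\geq \frac{t^2}{2(1+t/3)}$, which gives $\geq t^2/3$ on $[0,1]$ and $\geq t/3$ for $t\geq 1$, and $(1-t)\ln(1-t)+t\geq t^2/2$ handles the lower tail. The paper does not prove this lemma at all --- it is quoted as a known estimate from \cite{AS}, Appendix A --- and your write-up is essentially the proof found there; the only microscopic imprecision is that for $a=mp$ the lower-tail event $\{X\leq 0\}$ is not empty but has probability $(1-p)^m\leq e^{-a}$, which still fits within the claimed bound.
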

Another result we need is the following, proved (in a slightly
different form) in 
\cite{FGKP}.
\begin{lemma}[\cite{FGKP}]
\label{l22}
Let $G=(V,E)$ be a graph and let $H$ be the spanning random subgraph of
$G$ obtained as follows. For each vertex $v \in V$ let $x(v)$ be a
uniform random weight in $[0,1]$, where all choices are independent.
An edge $uv \in E$ is an edge of $H$ iff $x(u)+x(v) > 1$.
Let $v$ be a vertex of $G$ and suppose its degree in $G$ is $d$.
Then for every $k$, $0 \leq k \leq d$, the probability that the
degree of $v$ in $H$ is $k$ is exactly $\frac{1}{d+1}$.
\end{lemma}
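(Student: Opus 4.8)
The plan is to fix the vertex $v$, express its degree in $H$ directly in terms of the independent uniform weights, and then exploit exchangeability. Write $N(v)=\{u_1,\dots,u_d\}$ for the set of neighbours of $v$ in $G$. By the definition of $H$, the edge $vu_i$ survives precisely when $x(u_i)>1-x(v)$, so if we set $z:=1-x(v)$ then the degree of $v$ in $H$ equals $\bigl|\{\,i : x(u_i)>z\,\}\bigr|$. Since $x(v)$ is uniform on $[0,1]$, so is $z$; and because $G$ is simple (so $v\notin N(v)$), the variable $z$ is independent of $x(u_1),\dots,x(u_d)$.

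Now consider the $d+1$ independent, identically distributed uniform random variables $z,x(u_1),\dots,x(u_d)$. With probability $1$ they are pairwise distinct, so outside a null event their ranks are well defined. The key observation is exchangeability: since these $d+1$ variables are i.i.d., every ordering of them is equally likely, and in particular the rank $r\in\{1,\dots,d+1\}$ of $z$ among them (counting from the smallest) is uniformly distributed on $\{1,\dots,d+1\}$. If $z$ has rank $r$, then exactly $d+1-r$ of the variables $x(u_1),\dots,x(u_d)$ exceed $z$, hence $\deg_H(v)=d+1-r$. As $r$ ranges uniformly over $\{1,\dots,d+1\}$, the quantity $d+1-r$ ranges uniformly over $\{0,1,\dots,d\}$, which gives $\mathrm{Prob}(\deg_H(v)=k)=\frac{1}{d+1}$ for every $0\le k\le d$.

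There is no genuinely hard step here; the only points requiring minor care are the measure-zero event of ties among the weights, which does not affect any probability, and the bookkeeping that ``rank $r$ from below'' corresponds to ``$d+1-r$ neighbours lying above $z$''. If one prefers to avoid the exchangeability language, an equivalent route is to condition on the almost surely distinct values $x(u_1),\dots,x(u_d)$, sort them as $y_1<\cdots<y_d$, set $y_0=0$ and $y_{d+1}=1$, and observe that $\deg_H(v)=k$ exactly when $z\in(y_{d-k},y_{d-k+1})$; since the $d$ points $y_1,\dots,y_d$ split $[0,1]$ into $d+1$ spacings each of expected length $\frac{1}{d+1}$, and $z$ is an independent uniform point, this interval is hit with probability $\frac{1}{d+1}$, giving the same conclusion.
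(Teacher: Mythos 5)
Your argument is correct and is essentially identical to the paper's own combinatorial proof: both set $z=1-x(v)$, observe that $z,x(u_1),\dots,x(u_d)$ are i.i.d.\ uniform, and use exchangeability to conclude that the rank of $z$ (equivalently, the number of neighbour weights exceeding it) is uniform on $\{0,1,\dots,d\}$. The alternative spacings argument you sketch at the end is also fine but adds nothing beyond the symmetry argument.
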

The (simple) proof  given in \cite{FGKP} proceeds by computing the 
corresponding integral. Here is a combinatorial proof, avoiding
this computation. Let $Y=x(v)$ and let $X_1,X_2, \ldots ,X_d$
be the random weights of the neighbors of $v$. Then the random variables
$1-Y, X_1, X_2, \ldots ,X_d$ are i.i.d uniform random variables
in $[0,1]$. By symmetry, $1-Y$ is equally likely to be the 
$k+1$ largest among the variables $1-Y, X_1, \ldots ,X_d$
for all $1 \leq k+1 \leq d+1$, that is, the probability
that $1-Y$ is smaller than exactly $k$ of the variables $X_i$ 
is exactly $1/(d+1)$. The desired results follows as
$1-Y < X_i$ iff $X_i+Y >1$.  \hfill $\Box$
\vspace{0.2cm}

\noindent
We will also use the following well known result of Hajnal
and Szemer\'edi.
\begin{lemma}[\cite{HS}]
\label{l23}
Any graph with $n$ vertices and maximum degree at most $D$ admits
a proper vertex coloring by $D+1$ colors in which every color
class is of size either $\lfloor  n/(D+1) \rfloor$ or
$\lceil n/(D+1) \rceil$.
\end{lemma}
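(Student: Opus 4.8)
\textbf{Proof sketch of Lemma \ref{l23} (the Hajnal--Szemer\'edi theorem).}
There is no short route to this statement; the plan is to follow the augmenting-recolouring strategy behind the known proofs, closest in spirit to the short proof of Kierstead and Kostochka. Call a colouring in which every class has size $\lfloor n/(D+1)\rfloor$ or $\lceil n/(D+1)\rceil$ \emph{equitable}. By a routine padding argument it suffices to handle the divisible case $(D+1)\mid n$ with every class of size exactly $s:=n/(D+1)$: if $r:=n\bmod(D+1)\ne 0$, adjoin to $G$ a disjoint clique on $D+1-r$ new vertices (this keeps the maximum degree at most $D$ and makes the order divisible by $D+1$), and note that in any equitable colouring of the enlarged graph the clique occupies $D+1-r$ distinct classes, so deleting it leaves $r$ classes of size $\lceil n/(D+1)\rceil$ and $D+1-r$ of size $\lfloor n/(D+1)\rfloor$.

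Assuming $(D+1)\mid n$, I would induct on $|E(G)|$, the case $|E(G)|=0$ being immediate. For the inductive step delete an edge $xy$, apply the hypothesis to $G-xy$ (legitimate since $\Delta(G-xy)\le D$ and the order is unchanged) to get an equitable $(D+1)$-colouring $V_1,\dots,V_{D+1}$, and assume it is not proper for $G$, so $x,y\in V_1$. Since $\deg_{G-xy}(x)\le D-1$, some class $V_j$ with $j\ge 2$ is disjoint from $N(x)$; move $x$ into such a class, which we rename $V_2$. The colouring is now proper for $G$ but $|V_1|=s-1$ and $|V_2|=s+1$. Call $v\in V_j$ ($j\ge 2$) \emph{free for} $V_i$ if $v$ has no neighbour in $V_i$, and call a sequence of \emph{distinct} classes $V_2=U_0,U_1,\dots,U_t$ with vertices $w_i\in U_i$ a \emph{rebalancing chain} if $w_i$ is free for $U_{i+1}$ for each $i<t$ and $w_t$ is free for $V_1$. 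Given such a chain, performing the moves $w_t\to V_1$, then $w_{t-1}\to U_t$, \dots, then $w_0\to U_1$ keeps the colouring proper at every step and restores all class sizes to $s$, so the whole problem reduces to showing that a rebalancing chain always exists.

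To produce one, let $\mathcal R$ be the family of classes reachable from $V_2$ via free vertices ($V_2\in\mathcal R$, and if $V_i\in\mathcal R$ contains a vertex free for $V_j$ then $V_j\in\mathcal R$), and run a BFS so that tree-paths yield chains with distinct classes (the witnessing vertices are then automatically distinct and avoid $y$). If some class of $\mathcal R$ contains a vertex free for $V_1$, this gives a rebalancing chain and we are done; so suppose not. Then $V_1\notin\mathcal R$, and every vertex lying in a class of $\mathcal R$ has a neighbour in every class not in $\mathcal R$ (in particular in $V_1$). Writing $m=|\mathcal R|$, each such vertex therefore has at least $(D+1)-m$ neighbours outside $\bigcup\mathcal R$, hence at most $m-1$ inside it, so the subgraph of $G$ induced on the $ms+1$ vertices of $\bigcup\mathcal R$ has maximum degree at most $m-1$; moreover $x\in V_2$ has exactly one neighbour in $V_1$, namely $y$ (because $V_1$ was a colour class of $G-xy$), and none in $V_2$.

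The main obstacle is extracting a contradiction from this configuration: the crude edge count above is not by itself decisive, and one must argue --- as in the Hajnal--Szemer\'edi/Kierstead--Kostochka proof --- that the reachable classes are bound too tightly to the rest of $G$ to coexist with $\Delta(G)\le D$ \emph{unless} a recolouring that rebalances $V_1$ and $V_2$ already exists, possibly after re-choosing the class into which $x$ is placed or after also relocating a second vertex out of $V_1$. Making this dichotomy precise --- classifying the reachable classes, tracking the roles of $x$, $y$ and the deficient class $V_1$, and closing the count --- is where I expect essentially all of the work to lie; the padding reduction and the surrounding induction are routine.
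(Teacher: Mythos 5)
The paper does not prove Lemma~\ref{l23} at all: it is stated with the citation \cite{HS} and used as a black box (and, in a later remark, the Hajnal--Szemer\'edi theorem is explicitly named as the tool substituted for Azuma's inequality). So there is no in-paper proof to compare against, and the right standard here is whether your sketch would stand alone. It does not, and you say so yourself. The padding reduction to the divisible case, the edge-induction, the move of $x$ out of $V_1$ into a class avoiding $N(x)$, the definition of ``free'' and of a rebalancing chain, and the observation that the reachable family $\mathcal R$ has the property that every vertex of $\bigcup\mathcal R$ meets every class outside $\mathcal R$ (giving the induced subgraph on $\bigcup\mathcal R$ maximum degree at most $|\mathcal R|-1$) are all correct and are indeed the opening moves of the Kierstead--Kostochka proof. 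But the paragraph beginning ``The main obstacle is extracting a contradiction from this configuration'' is the entire content of the theorem, and you explicitly leave it open.

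To be concrete about what is missing: the degree bound on $G[\bigcup\mathcal R]$ alone cannot yield a contradiction, because a disjoint union of $|\mathcal R|$-cliques realizes it without any rebalancing chain being forced; what actually saves the argument in Kierstead--Kostochka is a second, nested analysis of which classes outside $\mathcal R$ are ``accessible'' in the reverse direction, together with a count of ``solo'' vertices (vertices that are the unique neighbour in their class of some vertex in an accessible class) and a case split on whether one can swap such a solo vertex to shorten or create a chain. That discharging-style count, not the first-order reachability argument, is where the hypothesis $\Delta\le D$ is finally used in full. As written, your proposal is an honest and accurately framed outline of the strategy, not a proof; the ``induction plus augmenting path'' frame is right, but the lemma you would need -- that a rebalancing chain always exists, or that its nonexistence can be repaired by one further local swap -- is asserted, not established.
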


We are now ready to prove Theorem \ref{t13} in the following
explicit form.
\begin{prop}
\label{p24}
Let $G=(V,E)$ be a $d$-regular graph on $n$ vertices. Suppose $0< \eps<1/3$
and assume that the following inequality holds.
\begin{equation}
\label{e21}
(d+1)(d^2+1) 2e^{-\frac{1}{3}\eps^2 \lfloor n/(d^2+1) 
\rfloor \cdot 1/(d+1)} <1.
\end{equation}
Then there is a spanning subgraph $H$ of $G$ so that for every integer $k$,
$0 \leq k \leq d$,
$$
\left|m(H,k)-\frac{n}{d+1}\right| \leq \eps\frac{n}{d+1}.
$$
\end{prop}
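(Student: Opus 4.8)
The plan is to use the random subgraph construction of Lemma~\ref{l22}: choose i.i.d.\ uniform weights $x(v)\in[0,1]$ and let $H$ be the spanning subgraph with edge set $\{uv\in E: x(u)+x(v)>1\}$. Since $G$ is $d$-regular, Lemma~\ref{l22} gives $\mathrm{Prob}[\deg_H(v)=k]=\frac{1}{d+1}$ for every vertex $v$ and every $0\le k\le d$, so by linearity $\mathbb{E}[m(H,k)]=\frac{n}{d+1}$ \emph{exactly}. The task is thus purely one of concentration: show that $m(H,k)=\sum_{v\in V}\mathbf 1[\deg_H(v)=k]$ is tightly concentrated around its mean simultaneously for all $k$. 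The obstacle is that the indicators $\mathbf 1[\deg_H(v)=k]$ are not independent, since $\deg_H(v)$ depends on the weights on the closed neighbourhood $N[v]=\{v\}\cup N(v)$, and these can overlap.

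The key observation is that $\deg_H(v)$ and $\deg_H(w)$ \emph{are} independent whenever $\mathrm{dist}_G(v,w)\ge 3$, because then $N[v]\cap N[w]=\emptyset$. So I would partition $V$ into a few classes, each a set of vertices at pairwise distance $\ge 3$, and treat each class on its own. Form the auxiliary graph $G'$ on $V$ with $u\sim w$ iff $1\le\mathrm{dist}_G(u,w)\le 2$; as $G$ is $d$-regular, each vertex has at most $d+d(d-1)=d^2$ vertices within distance $2$, so $\Delta(G')\le d^2$. Apply the Hajnal--Szemer\'edi theorem (Lemma~\ref{l23}) with $D=d^2$ to obtain a partition $V=C_1\cup\dots\cup C_{d^2+1}$ into color classes, each of size $\lfloor n/(d^2+1)\rfloor$ or $\lceil n/(d^2+1)\rceil$; within any $C_i$ the closed neighbourhoods are pairwise disjoint, so the variables $\{\deg_H(v):v\in C_i\}$ are mutually independent.

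Now fix $k$ and $i$, and put $Z_{i,k}=\sum_{v\in C_i}\mathbf 1[\deg_H(v)=k]$. By the previous paragraph and Lemma~\ref{l22} this is a sum of independent $\mathrm{Bernoulli}\!\left(\tfrac1{d+1}\right)$ variables, i.e.\ $Z_{i,k}\sim B\!\left(|C_i|,\tfrac1{d+1}\right)$, with mean $\mu_i=\tfrac{|C_i|}{d+1}\ge\tfrac1{d+1}\lfloor n/(d^2+1)\rfloor$. Applying Chernoff's inequality (Lemma~\ref{l21}) with $a=\eps\mu_i\le\mu_i$ gives
$$
\mathrm{Prob}\!\left[\,\bigl|Z_{i,k}-\mu_i\bigr|\ge\eps\mu_i\,\right]\le 2e^{-\eps^2\mu_i/3}\le 2e^{-\frac13\eps^2\lfloor n/(d^2+1)\rfloor/(d+1)}.
$$
Taking a union bound over the $d+1$ values of $k$ and the $d^2+1$ classes, the probability that some $Z_{i,k}$ deviates from $\mu_i$ by more than an $\eps$-fraction is at most the left-hand side of~(\ref{e21}), which is $<1$ by hypothesis. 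Hence there is a choice of the weights $x(\cdot)$, i.e.\ a fixed spanning subgraph $H$, with $|Z_{i,k}-\mu_i|\le\eps\mu_i$ for all $i$ and all $k$. For this $H$, using $\sum_i|C_i|=n$,
$$
\Bigl|m(H,k)-\tfrac{n}{d+1}\Bigr|=\Bigl|\sum_i\bigl(Z_{i,k}-\mu_i\bigr)\Bigr|\le\sum_i\eps\mu_i=\eps\,\tfrac{n}{d+1}
$$
for every $0\le k\le d$, as required.

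The only genuinely substantive step is the passage from the exact per-vertex probability to a uniform global concentration bound; the device that carries it is the distance-$3$ independence, which lets $m(H,k)$ be decomposed into $O(d^2)$ independent (indeed binomial) sums via Hajnal--Szemer\'edi. After that it is a routine Chernoff-plus-union-bound estimate, and the arithmetic is exactly what is packaged in hypothesis~(\ref{e21}). I would just double-check the two quantitative inputs used: that $\Delta(G')\le d^2$, so $d^2+1$ colors suffice, and that every class has size at least $\lfloor n/(d^2+1)\rfloor$, so the Chernoff exponent is at least the one appearing in~(\ref{e21}); both are immediate.
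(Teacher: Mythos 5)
Your proof is correct and is essentially identical to the paper's own argument: the same random threshold subgraph from Lemma~\ref{l22}, the same auxiliary distance-$\le 2$ graph with maximum degree $d^2$ colored via Hajnal--Szemer\'edi, and the same Chernoff-plus-union-bound over the $(d+1)(d^2+1)$ pairs, with the final summation over classes recovering the global bound. (Your explicit remark that independence requires distance at least $3$ is in fact slightly more careful than the paper's phrasing.)
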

\begin{proof}
For each vertex $v  \in V$, let $x(v)$ be a random weight chosen 
uniformly in $[0,1]$, where all  choices are independent.
Let $H$ be the random spanning subgraph of $G$ consisting of all
edges $uv \in E$ that satisfy $x(u)+x(v) >1$. Let $G^{(2)}$ denote
the auxiliary 
graph on the set of vertices $V$ in which two distinct vertices
are adjacent  if and only if their distance in $G$ is either
$1$ or $2$. The maximum degree of $G^{(2)}$ is at most
$d+d(d-1)=d^2$ and hence by Lemma \ref{l23} the set of vertices
$V$ has a partition into $t=d^2+1$  pairwise disjoint subsets
$V_1,V_2, \ldots ,V_t$, where 
$$|V_i|=n_i \in
\{ \lfloor n/(d^2+1) \rfloor, \lceil n/(d^2+1) \rceil\}$$
for all $i$ and each $V_i$ is an independent set in $G^{(2)}$. Note that
this means that the distance in $G$ between
any two distinct vertices $u,v \in V_i$ is at least $2$. As the
degree of each vertex  $v $ of $G$ in $H$ is determined by the
random weights assigned to it and to its neighbors, it follows
that for every fixed $0 \leq k \leq d$, the $n_i$ indicator
random variables $\{Z_{v,k}: v \in V_i\}$ where $Z_{v,k}=1$
iff the degree of $v$ in $H$ is $k$, are mutually independent.
By Lemma \ref{l22} each $Z_{v,k}$ is a Bernoulli random
variable with expectation $1/(d+1)$. For any fixed $k$ as above
it thus follows, by
Lemma \ref{l21} and the assumption inequality (\ref{e21}),
that the probability
that the number of vertices in $V_i$ whose degree in $H$ is $k$
deviates from $n_i/(d+1)$ by at least $\eps n_i/(d+1)$ is smaller than
$\frac{1}{(d^2+1)(d+1)}$. By the union bound over all pairs $V_i,k$, 
with positive probability this does not happen for any $k$ and any
$V_i$. But in this case for every $0 \leq k \leq d$ 
the total number of vertices with degree
$k$ in $H$ deviates from $n/(d+1)$ by less than
$\eps \sum_i n_i/(d+1)=\eps n/(d+1).$ This completes the proof.
\end{proof}
\vspace{0.2cm}

\noindent
{\bf Remark:}\, The proof above is similar to the proof of 
Lemma  7 in \cite{FGKP}. The improved estimate here is obtained
by replacing the application of Azuma's Inequality in \cite{FGKP} by the
argument using the Hajnal-Szemer\'edi 
Theorem (Lemma \ref{l23}), and by an appropriate
different choice of parameters.

By a simple modification of the proof of Proposition \ref{p24}
we next prove the following.
\begin{prop}
\label{p25}
Let $G=(V,E)$ be a graph on $n$ vertices with minimum degree
$\delta$ and maximum degree $\Delta$. Suppose $0<\eps<1/3$
and assume that the following inequality holds.
\begin{equation}
\label{e22}
(\Delta+1)(\delta \Delta+1) e^{-\frac{1}{3}\eps^2 \lfloor 
n/(\delta \Delta+1) \rfloor 
\cdot 1/(\delta+1)} <1.
\end{equation}
Then there is a spanning subgraph $H$ of $G$ so that
$$
m(H) \leq (1+\eps)\frac{n}{\delta+1}.
$$
\end{prop}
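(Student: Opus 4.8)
The plan is to repeat the proof of Proposition~\ref{p24} almost verbatim, changing only what the lack of regularity forces. As there, I would assign to each vertex $v\in V$ an independent uniform weight $x(v)\in[0,1]$ and let $H$ be the spanning subgraph of $G$ consisting of all edges $uv$ with $x(u)+x(v)>1$. Since $H$ is a subgraph of $G$, every degree occurring in $H$ lies between $0$ and $\Delta$, so it suffices to control $m(H,k)$ for the at most $\Delta+1$ integers $k$ with $0\le k\le\Delta$. By Lemma~\ref{l22}, if $v$ has degree $d_v$ in $G$ then $\mbox{Prob}[\deg_H(v)=k]$ equals $\tfrac{1}{d_v+1}$ when $k\le d_v$ and $0$ otherwise; in every case it is at most $\tfrac{1}{\delta+1}$.

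First I would form the auxiliary graph $G^{(2)}$ on $V$ in which two distinct vertices are joined iff their distance in $G$ is $1$ or $2$; its maximum degree is at most $\delta\Delta$, so Lemma~\ref{l23} provides a partition $V=V_1\cup\cdots\cup V_t$ into $t=\delta\Delta+1$ independent sets with $|V_i|=n_i\in\{\lfloor n/t\rfloor,\lceil n/t\rceil\}$ for all $i$. Now fix an integer $k$ with $0\le k\le\Delta$ and an index $i$. Because $V_i$ is independent in $G^{(2)}$, the sets consisting of each $v\in V_i$ together with its neighbors in $G$ are pairwise disjoint, so the indicators $Z_{v,k}$ (equal to $1$ iff $\deg_H(v)=k$, as in the proof of Proposition~\ref{p24}), $v\in V_i$, are mutually independent Bernoulli variables of mean at most $\tfrac{1}{\delta+1}$; hence $\sum_{v\in V_i}Z_{v,k}$ is stochastically dominated by a binomial $B\big(n_i,\tfrac{1}{\delta+1}\big)$, whose mean is exactly $\tfrac{n_i}{\delta+1}$. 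Applying the one-sided bound of Lemma~\ref{l21} with deviation $a=\eps\,\tfrac{n_i}{\delta+1}\le\tfrac{n_i}{\delta+1}$ then gives
\[
\mbox{Prob}\!\Big[\sum_{v\in V_i}Z_{v,k}\ge(1+\eps)\tfrac{n_i}{\delta+1}\Big]\le\exp\!\Big(-\tfrac{\eps^2}{3}\cdot\tfrac{n_i}{\delta+1}\Big)\le\exp\!\Big(-\tfrac{\eps^2}{3}\big\lfloor\tfrac{n}{\delta\Delta+1}\big\rfloor\tfrac{1}{\delta+1}\Big),
\]
which by the hypothesis~(\ref{e22}) is strictly smaller than $\tfrac{1}{(\Delta+1)(\delta\Delta+1)}$.

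I would then take a union bound over the at most $(\Delta+1)(\delta\Delta+1)$ pairs $(k,i)$: with positive probability none of these events occurs, and I fix a realization of the weights $x(\cdot)$ witnessing this. For every $k$ we get
\[
m(H,k)=\sum_{i=1}^{t}\ \sum_{v\in V_i}Z_{v,k}\ <\ (1+\eps)\sum_{i=1}^{t}\tfrac{n_i}{\delta+1}\ =\ (1+\eps)\tfrac{n}{\delta+1},
\]
so $m(H)\le(1+\eps)\tfrac{n}{\delta+1}$, which is the desired conclusion.

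The step I expect to require the most care is bounding each bad-event probability by the clean expression appearing in~(\ref{e22}): the genuine mean $\sum_{v\in V_i}\mbox{Prob}[\deg_H(v)=k]$ can be well below $\tfrac{n_i}{\delta+1}$ — vertices of large degree contribute a smaller probability, and vertices of degree below $k$ contribute nothing — so a direct use of Chernoff around the true mean would be awkward. Passing through stochastic domination by $B\big(n_i,\tfrac{1}{\delta+1}\big)$ removes this difficulty, and it also explains why the factor $2$ present in~(\ref{e21}) disappears here, since only the upper tail is needed (we want $m(H)$ small, so the lower tail is irrelevant). The remaining differences from the proof of Proposition~\ref{p24} — replacing $d^2$ by $\delta\Delta$ and $d+1$ by $\delta+1$ in the parameters, and letting $k$ range over $0,\dots,\Delta$ — are routine bookkeeping.
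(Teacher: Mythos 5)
There is a genuine gap at the very first step: you claim that the maximum degree of $G^{(2)}$ (distance $1$ or $2$ in $G$) is at most $\delta\Delta$, but for the original graph $G$ this is false in general. A vertex $v$ of degree $d_v$ can have up to $d_v+d_v(\Delta-1)\le\Delta^2$ vertices within distance $2$, and $\Delta^2$ can greatly exceed $\delta\Delta$ when $\Delta>\delta$ (e.g.\ a graph in which most vertices have degree $\Delta$ and only a few have degree $\delta$). Consequently Lemma~\ref{l23} only yields a partition into $\Delta^2+1$ classes, each of size about $n/(\Delta^2+1)$, and the Chernoff exponent becomes $\frac{1}{3}\eps^2\lfloor n/(\Delta^2+1)\rfloor\cdot\frac{1}{\delta+1}$, which is not controlled by the hypothesis~(\ref{e22}); the stated inequality involves $\lfloor n/(\delta\Delta+1)\rfloor$ and is therefore too weak to make your union bound go through.

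The missing idea is a preprocessing step: before choosing the random weights, repeatedly delete from $G$ any edge joining two vertices both of degree larger than $\delta$, obtaining a spanning subgraph $G'$ that still has minimum degree $\delta$ and in which every edge has at least one endpoint of degree exactly $\delta$. In $G'$ the number of vertices within distance $2$ of any vertex is at most $\max\{\delta+\delta(\Delta-1),\ \Delta+\Delta(\delta-1)\}=\delta\Delta$, so the Hajnal--Szemer\'edi partition into $\delta\Delta+1$ classes of size at least $\lfloor n/(\delta\Delta+1)\rfloor$ is legitimate, and one runs your argument on $G'$ (a spanning subgraph $H$ of $G'$ is still a spanning subgraph of $G$, and degrees in $G'$ are still at least $\delta$, so the bound $\mathrm{Prob}[\deg_H(v)=k]\le\frac{1}{\delta+1}$ survives). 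Everything after that point in your write-up — the stochastic domination by $B\bigl(n_i,\frac{1}{\delta+1}\bigr)$, the one-sided Chernoff bound, and the union bound over the $(\Delta+1)(\delta\Delta+1)$ pairs $(k,i)$ — is correct and matches the paper's argument, including the observation that only the upper tail is needed.
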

\begin{proof}
Start by modifying $G$ to a graph $G'$ obtained by repeatedly
deleting any edge connecting two vertices, both of degrees
larger than  $\delta$, as long as there are such edges. Thus $G'$
is a spanning subgraph of $G$. It has minimum degree $\delta$
and every edge in it has at least one
end-point of degree exactly $\delta$. Let $G'^{(2)}$ denote the
auxiliary graph on the set of vertices $V$ in which two distinct vertices
are adjacent iff they are either adjacent or have a common neighbor
in $G'$. The maximum degree in $G'^{(2)}$ is at most
$$\max\{ \delta+\delta(\Delta-1), \Delta+\Delta(\delta-1)\}=
\delta \Delta.$$
We can now follow the argument in the proof of the previous
proposition, splitting $V$ into $\delta \Delta+1$ nearly equal
pairwise disjoint sets $V_i$, and defining a spanning random subgraph
$H$ of $G'$ (and hence of $G$) using independent random
uniform weights in $[0,1]$ as before. Here for every vertex $v$
and every integer $k$, the probability that the degree of 
$v$ in $H$ is $k$, is at most $1/(\delta+1)$. This, the obvious 
monotonicity, and the fact that the events corresponding to
distinct members of $V_i$ are independent, imply, by Lemma
\ref{l21} and by the assumption inequality 
(\ref{e22}), that the probability
that $V_i$ contains at least $(1+\eps)|V_i|/(\delta+1)$
vertices of degree $k$ is smaller than $\frac{1}{(\delta
\Delta+1)(\Delta+1)}$. The desired result follows from the union
bound, as before.
\end{proof}
\vspace{0.2cm}

\noindent
Similarly, we can prove the following
strengthening of the last proposition.
\begin{prop}
\label{p26}
Let $G=(V,E)$ be a graph with at least $n$ vertices, minimum degree
$\delta$ and maximum degree $\Delta$. Suppose $0< \eps<1/3$. Let
$X \subset V$ be a set of $n$ vertices of $G$
and assume that the inequality (\ref{e22}) holds.
Then there is a spanning subgraph $H$ of $G$ so that for every $k$
the number of vertices in $X$ of degree $k$ in $H$ is at most
$(1+\eps)\frac{n}{\delta+1}$.
\end{prop}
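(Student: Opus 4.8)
The plan is to run the proof of Proposition~\ref{p25} essentially verbatim, with one modification: the Hajnal--Szemer\'edi partition should be applied not to all of $V$ but to the subgraph induced on $X$, so that the resulting blocks split $X$ itself into nearly equal parts. This is exactly what is needed to make the relevant block sizes match the exponent $\lfloor n/(\delta\Delta+1)\rfloor$ appearing in inequality~(\ref{e22}).

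In detail, I would first pass to the spanning subgraph $G'$ of $G$ obtained by repeatedly deleting any edge both of whose endpoints have degree larger than $\delta$, exactly as in Proposition~\ref{p25}; then $G'$ has minimum degree $\delta$, maximum degree at most $\Delta$, and every edge of $G'$ has an endpoint of degree exactly $\delta$. Let $G'^{(2)}$ be the auxiliary graph on $V$ in which two distinct vertices are adjacent iff they are adjacent or have a common neighbour in $G'$; its maximum degree is at most $\delta\Delta$, and hence so is that of the induced subgraph $G'^{(2)}[X]$. Applying Lemma~\ref{l23} to $G'^{(2)}[X]$, which is a graph on $n$ vertices of maximum degree at most $\delta\Delta$, partitions $X$ into $t=\delta\Delta+1$ independent sets $X_1,\dots,X_t$ with $|X_i|\in\{\lfloor n/(\delta\Delta+1)\rfloor,\lceil n/(\delta\Delta+1)\rceil\}$ for every $i$; in particular no two distinct vertices of a fixed $X_i$ are adjacent or share a neighbour in $G'$.

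Now define the random spanning subgraph $H$ of $G'$ (hence of $G$) as in the previous proofs: assign each vertex of $V$ an independent uniform weight $x(v)\in[0,1]$ and keep the edge $uv$ iff $x(u)+x(v)>1$. For $v\in X$ and any integer $k$, Lemma~\ref{l22} gives $\mathrm{Prob}(\deg_H(v)=k)\le 1/(\delta+1)$, and since the event $\deg_H(v)=k$ depends only on the weights of $v$ and of its $G'$-neighbours, the indicators $\{\mathbf 1[\deg_H(v)=k]:v\in X_i\}$ are mutually independent. Hence $Z_{i,k}:=|\{v\in X_i:\deg_H(v)=k\}|$ is stochastically dominated by $B(|X_i|,1/(\delta+1))$, so by Lemma~\ref{l21} (the one-sided estimate with $a=\eps|X_i|/(\delta+1)\le |X_i|/(\delta+1)$) the probability that $Z_{i,k}\ge (1+\eps)|X_i|/(\delta+1)$ is at most $e^{-\eps^2|X_i|/(3(\delta+1))}\le e^{-\frac13\eps^2\lfloor n/(\delta\Delta+1)\rfloor\cdot 1/(\delta+1)}$. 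As degrees in $H$ lie in $\{0,\dots,\Delta\}$, a union bound over the $(\delta\Delta+1)(\Delta+1)$ pairs $(i,k)$ together with assumption~(\ref{e22}) shows that with positive probability $Z_{i,k}<(1+\eps)|X_i|/(\delta+1)$ for all $i$ and all $k$; summing over $i$ then yields $|\{v\in X:\deg_H(v)=k\}|<(1+\eps)n/(\delta+1)$ for every $k$, which is the desired conclusion. The argument is routine given Proposition~\ref{p25}; the only genuinely new point — and the only place where a little care is needed — is the observation that restricting attention to $X$ does not increase the relevant maximum degree, so that Hajnal--Szemer\'edi still produces blocks of size about $n/(\delta\Delta+1)$ which partition $X$ rather than $V$.
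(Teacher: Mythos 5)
Your proposal is correct and matches the paper's argument: the paper likewise forms $G'$, takes the auxiliary distance-$\le 2$ graph restricted to $X$ (of maximum degree at most $\delta\Delta$), applies Hajnal--Szemer\'edi to partition $X$ into $\delta\Delta+1$ nearly equal independent sets, and then runs the random-threshold construction with Chernoff and a union bound exactly as in Proposition~\ref{p25}. No gaps.
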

\begin{proof}
The proof is a slight modification of the previous one. Let $G'$ 
be the graph obtained from $G$ as before. 
Let $F$ denote the
auxiliary graph on the set of vertices $X$ in which two distinct vertices
are adjacent iff they are either adjacent or have a common neighbor
in $G'$. The maximum degree in this graph  is at most
$\delta \Delta.$
We can thus follow the argument in the proof of the previous
proposition, splitting $X$ into $\delta \Delta+1$ nearly equal
pairwise disjoint sets $X_i$, and defining a spanning random subgraph
$H$ of $G'$ (and hence of $G$) using the independent random
uniform weights in $[0,1]$ as before. 
\end{proof}

We can now prove the assertion of Theorem \ref{t14} 
provided $\delta^4 =o(n/ \log n)$ in the following 
explicit form.
\begin{prop}
\label{p27}
Let $G=(V,E)$ be a graph on $n$ vertices with minimum degree
$\delta$. Suppose $0< \eps<1/3$. Define $D=\frac{\delta
(\delta+1)}{\eps}$
and assume that the following inequality holds.
\begin{equation}
\label{e23}
(D+1)(\delta D+1) e^{-\frac{1}{3}\eps^2 \lfloor n/(\delta D+1) \rfloor 
\cdot 1/(\delta+1)} <1.
\end{equation}
Then there is a spanning subgraph $H$ of $G$ so that
$$
m(H) \leq (1+2\eps)\frac{n}{\delta+1}.
$$
\end{prop}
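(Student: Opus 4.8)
The plan is to reduce Proposition~\ref{p27} to the situation of Proposition~\ref{p25} with $\Delta$ replaced by $D$: peel off the few vertices of very large degree, and build a random spanning subgraph in which the edges incident with those vertices carry their own independent randomness, so that they do not propagate dependence. First I would tidy up $G$: repeatedly delete an edge both of whose current endpoints have degree larger than $\delta$, obtaining a spanning subgraph $G'$ which, exactly as in the proof of Proposition~\ref{p25}, has minimum degree $\delta$ and in which every vertex of degree $>\delta$ has all its neighbours of degree exactly $\delta$. Let $B=\{v:\deg_{G'}(v)>D\}$ be the set of heavy vertices and $A=V\setminus B$. Then $B$ is an independent set, all of whose neighbours have degree $\delta$, so counting the edges of $G'$ meeting $B$ from both sides gives $D|B|<\sum_{w\in B}\deg_{G'}(w)\le\delta|N(B)|\le\delta n$, hence $|B|<\delta n/D=\eps\frac{n}{\delta+1}$. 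Consequently it is enough to build $H$ so that for every $k$ at most $(1+\eps)\frac{n}{\delta+1}$ vertices of $A$ have degree $k$ in $H$, since the at most $|B|<\eps\frac{n}{\delta+1}$ heavy vertices contribute at most $|B|$ to each $m(H,k)$.

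To build $H$, choose independent uniform weights $x(v)\in[0,1]$ for $v\in V$ and independent uniform weights $z_e\in[0,1]$ for every edge $e$ of $G'$ meeting $B$. Put an edge $uu'$ with $u,u'\in A$ into $H$ iff $x(u)+x(u')>1$, and put an edge $wu$ with $w\in B$ (so $u\in A$ and $\deg_{G'}(u)=\delta$) into $H$ iff $z_{wu}+x(u)>1$. For $u\in A$, the $\deg_{G'}(u)+1$ numbers consisting of $1-x(u)$, the $x(u')$ over the $A$-neighbours $u'$ of $u$, and the $z_{wu}$ over the $B$-neighbours $w$ of $u$ are i.i.d.\ uniform in $[0,1]$, and $\deg_H(u)$ is the number of the last $\deg_{G'}(u)$ of them exceeding $1-x(u)$; the symmetry argument from the proof of Lemma~\ref{l22} then gives that $\deg_H(u)=k$ with probability $\frac1{\deg_{G'}(u)+1}\le\frac1{\delta+1}$ for each $k$. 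The private variables $z_e$ ensure that $\deg_H(u)$ depends only on $x(u)$, on the $x$-values of the $A$-neighbours of $u$, and on the $z_{wu}$, so that $\deg_H(u)$ and $\deg_H(u')$ are independent whenever $u,u'\in A$ are neither adjacent nor have a common neighbour in the induced subgraph $G'[A]$. Since $\Delta(G'[A])\le D$ and, in $G'$, every vertex of degree $>\delta$ has all neighbours of degree $\delta$, the auxiliary graph $G'[A]^{(2)}$ on $A$ (two vertices joined if at distance at most $2$ in $G'[A]$) has maximum degree at most $\delta D$.

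Finally, as in Propositions~\ref{p24}--\ref{p25}, apply Lemma~\ref{l23} to $G'[A]^{(2)}$ to split $A$ into $\delta D+1$ nearly equal independent sets $A_1,\dots,A_{\delta D+1}$; within $A_i$ the variables $\{\deg_H(u):u\in A_i\}$ are mutually independent and attain each value with probability at most $\frac1{\delta+1}$, so Lemma~\ref{l21} bounds by $e^{-\frac13\eps^2|A_i|/(\delta+1)}$ the probability that $A_i$ has at least $(1+\eps)\frac{|A_i|}{\delta+1}$ vertices of some fixed degree $k\in\{0,\dots,D\}$. A union bound over the $\delta D+1$ classes and the $D+1$ values of $k$, controlled by inequality~(\ref{e23}) (matching $|A_i|\ge\lfloor|A|/(\delta D+1)\rfloor$ to the floor there is harmless since $|A|\ge(1-\eps)n$), then leaves positive probability that no bad event occurs, on which $m(H,k)\le\sum_i(1+\eps)\frac{|A_i|}{\delta+1}+|B|<(1+2\eps)\frac{n}{\delta+1}$ for all $k$. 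The hard part is this choice of randomness: the edges at a heavy vertex must be randomized by independent weights $z_e$, not through a single weight $x(w)$, for otherwise that vertex would make all of its many $A$-neighbours mutually dependent and $G'[A]^{(2)}$ could not play the role of $G^{(2)}$; yet one must keep $x(u)$ at the light endpoint so that $\deg_H(u)$ stays uniform on $\{0,\dots,\deg_{G'}(u)\}$ and the bound $\frac1{\delta+1}$ is preserved.
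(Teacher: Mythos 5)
Your argument is correct and reaches the stated conclusion, but it replaces the paper's key device by a genuinely different one. Both proofs begin identically: pass to $G'$ by deleting edges joining two vertices of degree $>\delta$, isolate the set of vertices of degree $>D$ (your $B$, the paper's $C$), and bound its size by $\eps n/(\delta+1)$ via the same double count, so that these vertices can be absorbed into the error term. The divergence is in how the remaining vertices are shielded from the dependence that a neighbour of huge degree would create. The paper splits each such vertex into $\lfloor \deg/\delta\rfloor$ clones of degree between $\delta$ and $2\delta$, obtaining a graph $G''$ of maximum degree at most $D$ to which Proposition \ref{p26} applies verbatim (with $X$ a set of exactly $n$ vertices containing all unsplit ones), and then pulls the subgraph back to $G'$. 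You instead keep the heavy vertices and change the random model: edges inside $A$ are decided by the rule $x(u)+x(u')>1$, while each edge $wu$ with $w$ heavy gets its own private uniform variable $z_{wu}$ and is kept iff $z_{wu}+x(u)>1$. This preserves the exchangeability argument of Lemma \ref{l22}, so $\deg_H(u)$ is uniform on $\{0,\dots,\deg_{G'}(u)\}$ and each value has probability at most $1/(\delta+1)$, while common neighbours in $B$ no longer create dependence; Hajnal--Szemer\'edi applied to $G'[A]^{(2)}$, whose maximum degree is at most $\delta D$ by the same computation as in Proposition \ref{p25}, then suffices. Your route avoids the splitting construction and the pull-back; the paper's route reuses Proposition \ref{p26} as a black box.

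One quantitative point deserves more than the word ``harmless.'' Your partition classes cover only $A$, with $|A|>n\bigl(1-\eps/(\delta+1)\bigr)$, so the Chernoff exponent you obtain involves $\lfloor |A|/(\delta D+1)\rfloor$ rather than the $\lfloor n/(\delta D+1)\rfloor$ appearing in (\ref{e23}); the hypothesis as literally stated therefore does not quite cover your union bound, whereas the paper's padding of $X$ to exactly $n$ vertices makes it exact. The discrepancy only shrinks the exponent by the factor $1-\eps/(\delta+1)$ and is immaterial for Theorem \ref{t14}, but to prove Proposition \ref{p27} exactly as stated you should either restate (\ref{e23}) with $(1-\eps)n$ in place of $n$ or record this adjustment explicitly.
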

\begin{proof}
Let $G=(V,E)$, $\delta$, $\eps$ and $D$ be as above. As in the
previous proofs we start by 
modifying $G$ to a graph $G'$ obtained by repeatedly
deleting any edge connecting two vertices, both of degrees
larger than  $\delta$, as long as there are such edges. Thus $G'$
is a spanning subgraph of $G$; it has minimum degree $\delta$
and the set of all its vertices of degree exceeding $\delta$ is an 
independent set. Let $A$ denote the set of all vertices of degree
$\delta$ in $G'$, $B$ the set of all vertices of  degrees
larger than $\delta$ and at most $D$ in $G'$
(if there are any), and $C$ the set of all vertices of
degree exceeding $D$. Since all edges from the vertices of $C$
lead to vertices of $A$ (as $B \cup C$ is an independent set) it follows,
by double-counting, that $|C| D  < |A| 
\delta \leq n \delta$
and thus $|C| \leq n \delta/D =\eps\frac{n}{\delta+1}.$

If $C=\emptyset$ define $G''=G'$; otherwise let $G''$ be the graph
obtained from $G'$ as follows. For every vertex $v \in C$ of degree
$d (>D)$ replace $v$ by a set $S_v$ of $k_v=\lfloor d/\delta
\rfloor$ new
vertices $v_1,v_2, \ldots ,v_{k_v}$. Split the  set of neighbors
of $v$ in $G'$ (that are all in $A$) into $k_v$ pairwise disjoint
sets $N_1, N_2, \ldots, N_{k_v}$, each of size at least $\delta$
and at most $2 \delta$, and join the vertex $v_i$ to all vertices
in $N_i$ $(1 \leq i \leq k_v)$. Thus $G''$ is obtained by splitting
all vertices of $C$, and there is a clear bijection between the
edges of $G''$ and those of $G'$. Let $X$ be an arbitrary subset of
$n$ vertices of $G''$ containing all vertices in $A \cup B$.
The graph $G''$ has minimum degree
$\delta$ and maximum degree at most $D$; hence by Proposition
\ref{p26} (which can be applied by the assumption inequality (\ref{e23}))
it has a spanning subgraph $H''$ so that no degree is repeated more
than $(1+\eps)\frac{n}{\delta+1}$ times among the vertices of $X$. 
Let $H$ be the spanning subgraph
of $G'$ (and hence of $G$) consisting of exactly the set of edges
of $H''$. The degree of each vertex in $A \cup B$ in $H''$ is the
same as its degree in $H$,
hence $H$ contains at most 
$(1+\eps)\frac{n}{\delta+1}$ vertices of each fixed degree in
$A \cup B$ (which is a subset of $X$). 
We  have no control on the degrees of the vertices of
$C$ in $H$, but their total number is at most $\eps \frac{n}{\delta+1}$.
Therefore $m(H) \leq (1+2\eps)\frac{n}{\delta+1}$, completing the proof.
\end{proof}

\section{Proof of Theorems \ref{t15} and \ref{t16}} 
The main tool in the proofs of Theorems \ref{t15}  and
\ref{t16} is the following result of \cite{ADR}.
A similar application of this result appears in
\cite{Pr1}.
\begin{lemma}[\cite{ADR}]
\label{l31}
Let $G=(V,E)$ be a graph. For each vertex $v \in V$ 
let $\deg(v)$ denote the degree of $v$ in $G$. For each vertex $v$, let
$a(v)$ and $b(v)$ be two non-negative integers satisfying
\begin{equation}
\label{e31}
a(v) \leq  \left \lfloor\frac{\deg(v)}{2}  
\right\rfloor \leq b(v) < \deg(v)
\end{equation} 
and
\begin{equation}
\label{e32}
b(v) \leq \frac{\deg(v)+a(v)}{2}+1~~\mbox{and}~~ b(v) \leq 2a(v)+3.
\end{equation}
Then there is a spanning subgraph $H$ of $G$ so that for every 
vertex $v$ the
degree of $v$ in $H$ lies in the set  
$\{a(v), a(v)+1,b(v),b(v)+1\}$.
\end{lemma}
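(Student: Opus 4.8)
\medskip
\noindent\textit{A proposed approach.}
The plan is to replace the disjunctive requirement ``$\deg_H(v)\in\{a(v),a(v)+1,b(v),b(v)+1\}$'' by an ordinary interval requirement, at the cost of first \emph{assigning a mode} to every vertex. For a map $\mu\colon V\to\{\mathrm{L},\mathrm{H}\}$ put $(g_\mu(v),f_\mu(v))=(a(v),a(v)+1)$ if $\mu(v)=\mathrm{L}$ and $(g_\mu(v),f_\mu(v))=(b(v),b(v)+1)$ if $\mu(v)=\mathrm{H}$. By (\ref{e31}) one has $0\le g_\mu(v)<f_\mu(v)\le\deg(v)$ for every $v$, so any spanning subgraph $H$ with $g_\mu(v)\le\deg_H(v)\le f_\mu(v)$ everywhere --- a $(g_\mu,f_\mu)$-factor --- automatically obeys the conclusion. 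Since the conclusion is vertex-by-vertex we may treat the components of $G$ separately; so it suffices to produce, in each component, \emph{one} mode assignment $\mu$ admitting a $(g_\mu,f_\mu)$-factor.

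For fixed $\mu$, existence of a $(g_\mu,f_\mu)$-factor is decided by Lov\'asz's $(g,f)$-factor theorem, and since $f_\mu(v)=g_\mu(v)+1$ at every vertex the usual parity correction in that criterion is void, leaving the clean Hall-type condition that
\[
\Phi_\mu(S,T):=\sum_{v\in S}f_\mu(v)+\sum_{v\in T}\bigl(\deg_{G-S}(v)-g_\mu(v)\bigr)\ \ge\ 0 \quad\text{for all disjoint }S,T\subseteq V.
\]
Flipping a vertex from $\mathrm{L}$ to $\mathrm{H}$ raises both $g_\mu$ and $f_\mu$ there by $b(v)-a(v)$, hence helps $\Phi_\mu$ on pairs with $v\in S$ and hurts it, by the same amount, on pairs with $v\in T$. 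A cut is violated precisely when the vertices of $T$ demand more, via $\sum_{v\in T}\bigl(g_\mu(v)-\deg_{G-S}(v)\bigr)$, than $S$ can supply, via $\sum_{v\in S}f_\mu(v)$; so ``in-demand'' vertices --- those adjacent to many vertices with large lower bound --- should be placed in $\mathrm{H}$ to enlarge their $f$. I would make $\mu$ precise by taking one that maximises $\min_{S,T}\Phi_\mu(S,T)$ and arguing that a negative minimum can always be raised by a single flip.

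The step I expect to be the main obstacle is exactly this: exhibiting \emph{some} $\mu$ for which every inequality $\Phi_\mu(S,T)\ge0$ holds --- and here the two inequalities of (\ref{e32}) are used in full. A naive local search need not converge (a flip repairing one cut may spoil another), so one must argue with an inclusion-minimal violated pair, or recast the choice of $\mu$ as a flow/LP feasibility statement. Quantitatively, in a violated cut one needs roughly $\sum_{v\in S}f_\mu(v)\gtrsim\sum_{v\in T}g_\mu(v)$: the bound $b(v)\le\frac{\deg(v)+a(v)}{2}+1$ keeps the lower bounds $g_\mu(v)$ of $\mathrm{H}$-vertices below about $\tfrac34\deg(v)$, while $b(v)\le 2a(v)+3$ keeps the gain $b(v)-a(v)$ of a flip at most $a(v)+3$, comparable to the room already guaranteed below $\lfloor\deg(v)/2\rfloor$ by (\ref{e31}); these two facts together are what force the supply side of every cut to win. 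Some restriction of this kind is genuinely necessary, since the general-factor problem for sets $\{a,a+1,b,b+1\}$ with a large middle gap can be infeasible. An essentially equivalent but more hands-on route avoids Lov\'asz's theorem: start from the subgraph $H_0$ obtained by joining a new vertex to the odd-degree vertices of $G$, taking an Eulerian circuit of the resulting even graph and $2$-colouring its edges alternately (so $\deg_{H_0}(v)\in\{\lfloor\deg(v)/2\rfloor,\lceil\deg(v)/2\rceil\}\subseteq[a(v),b(v)+1]$ by (\ref{e31})), then correct it by pairwise edge-disjoint alternating trails between $H_0$ and $E(G)\setminus H_0$, each shifting its two endpoint-degrees by $\pm1$ (equal signs for odd trails, opposite for even, matching the parity of $\sum_v(\deg_H(v)-\deg_{H_0}(v))$) and leaving interior degrees fixed; the existence of such a family is an integral routing problem whose feasibility, by a Menger/max-flow estimate, is again guaranteed by (\ref{e31})--(\ref{e32}).
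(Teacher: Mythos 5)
The paper itself offers no proof of this lemma --- it is quoted verbatim from \cite{ADR} and used as a black box --- so your argument has to stand entirely on its own, and at present it does not. The reduction in your first paragraph is sound: fixing a mode $\mu(v)\in\{\mathrm{L},\mathrm{H}\}$ at each vertex and finding a $(g_\mu,f_\mu)$-factor with $f_\mu=g_\mu+1$ would give the conclusion, and since $g_\mu<f_\mu$ everywhere the parity term in Lov\'asz's criterion indeed vanishes, so the condition is the family of inequalities $\Phi_\mu(S,T)\ge 0$ you write down. But the entire content of the lemma is the existence of \emph{one} mode assignment for which all of these inequalities hold, and that is precisely the step you leave open. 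The proposed mechanism --- take $\mu$ maximising $\min_{S,T}\Phi_\mu(S,T)$ and argue that a negative minimum can be raised by a single flip --- is not an argument: as you yourself observe, a flip repairing one cut may break another, and you give no reason why the maximiser attains a non-negative minimum. The discussion of how (\ref{e32}) enters never leaves the level of heuristics (``roughly'', ``about $\tfrac34\deg(v)$'', ``comparable to'') and is never turned into a verified inequality for an arbitrary disjoint pair $(S,T)$. The fallback Eulerian route has the same status: the required family of alternating trails is asserted to exist by a ``Menger/max-flow estimate'' that is never set up, let alone checked against (\ref{e31})--(\ref{e32}).

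The missing step is not a routine verification. Asking for a spanning subgraph with $\deg_H(v)$ in a prescribed set $B(v)$ is the general factor problem, which by Cornu\'ejols' theorem is NP-hard as soon as some $B(v)$ has a gap of length at least $2$ --- exactly the situation here when $b(v)\ge a(v)+3$. So any correct proof must use (\ref{e31}) and (\ref{e32}) in an essential, quantitative way to certify feasibility; soft or local-improvement considerations cannot suffice in general. To complete your approach you would need either to exhibit an explicit $\mu$ and verify $\Phi_\mu(S,T)\ge 0$ for every disjoint pair (typically by analysing a minimal violated pair and deriving a contradiction from (\ref{e31})--(\ref{e32})), or to carry out the trail-rerouting argument with an actual feasibility proof. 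As it stands, the proposal identifies the right reduction and correctly locates the difficulty, but does not resolve it.
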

Theorem \ref{t15} is an easy consequence of this lemma, as we show
next.
\vspace{0.2cm}

\noindent
{\bf Proof of Theorem \ref{t15}:}\, 
Let $G=(V,E)$ be a $d$-regular graph on $n$ vertices. 
Since the assertion is trivial for $d \leq 8$ assume $d>8$.
Put 
$k=\lceil d/4 \rceil$ and split $V$ arbitrarily into
$k$ pairwise disjoint sets of vertices $V_1, \ldots ,V_k$, each of size
at most $\lceil n/k \rceil$.  For each vertex $v \in V_i$ define
$a(v)=\lceil d/2 \rceil -i$ and $b(v)=\lceil d/2 \rceil 
+k -i$. It is easy to check that $\deg(v)=d$ and each such 
$a(v),b(v)$ satisfy (\ref{e31}) and (\ref{e32}). By Lemma \ref{l31}
there is a spanning subgraph $H$ of $G$ in which the degree of every
$v \in V_i$ is in the set 
$$S_i=\{\lceil d/2 \rceil -i, \lceil d/2 \rceil -i+1,
\lceil d/2 \rceil +k -i, \lceil d/2 \rceil +k -i+1\}.$$  
It is easy to check that no integer belongs to more than $2$ of the
sets $S_i$, implying that
$m(H) \leq 2\lceil n/k \rceil < 8n/d+2,$ and completing the
proof. \hfill $\Box$
\vspace{0.2cm}

\noindent
The proof of Theorem \ref{t16} is similar, combining the reasoning
above with one additional argument.
\vspace{0.2cm}

\noindent
{\bf Proof of Theorem \ref{t16}:}\, 
Let $G=(V,E)$ be a graph with $n$ vertices and minimum degree
$\delta$. As the result is trivial for $\delta \leq 16$, assume
$\delta > 16$. Order the vertices of $G$ by degrees, that is,
put $V=\{v_1, v_2, \ldots ,v_n\}$, where the degree of $v_i$ is
$d_i$ and $d_1 \geq d_2 \geq \ldots \geq d_n$. Put
$k=\lceil \delta /4 \rceil$ and split the set of vertices into
$m=\lceil n/k \rceil$ blocks $B_1, B_2, \ldots ,B_m$ of consecutive
vertices in the order above, each (besides possibly the last)
containing $k$ vertices. Thus 
$B_i=\{v_{(i-1)k+1}, v_{(i-1)k+2}, \ldots ,v_{ik}\}$
for all $i<m$ and $B_m=V-\cup_{i<m} B_i$.
Fix a block $B=B_i$; let $w_1, w_2, \ldots , w_k$ denote its
vertices and let $f_1 \geq f_2 \geq \ldots \geq f_k$ be their
degrees (assume now that $B$ is not the last block). For each 
vertex $w_i$ define $a_i=\lceil f_i/2 \rceil -i$,
$b_i=\lceil f_i/2 \rceil +k-i$. For the last block $B_m$
define the numbers $a_i,b_i$ similarly, taking only the first
$|B_m| (\leq k)$ terms defined as above.
Note that the 
sequence $(a_1,a_2, \ldots ,a_k)$ (as well as the possibly shorter one
for the last block) is strictly decreasing,
and so are the sequences $(a_1+1,a_2+1, \ldots ,a_k+1)$,
$(b_1,b_2, \ldots ,b_k)$ and $(b_1+1, b_2+1, \ldots ,b_k+1)$.
Therefore, no integer belongs to more than $4$ of the sets 
$S(w_i)=\{a_i,a_i+1,b_i,b_i+1\}$, $1 \leq i \leq k$. Note also that
the numbers $\deg(v)=f_i, a(v)=a_i, b(v)=b_i$  satisfy (\ref{e31})
and (\ref{e32}). By Lemma \ref{l31} there is a spanning
subgraph $H$ of $G$ in which the degree of every vertex $v$
lies in the corresponding set $S(v)$. Therefore
$m(H) \leq 4m <16\frac{n}{\delta}+4$, completing the proof.
\hfill $\Box$

\section{Proof of Theorem \ref{t17} and \ref{t18}}
\vspace{0.1cm}

\noindent
{\bf Proof of Theorem \ref{t17}:}\,
The proof is based on the simple known fact that the incidence matrix of 
any bipartite  graph is totally unimodular (see, e.g., \cite{Sch},
page 318). Let $G=(V,E)$ be a bipartite graph 
and let $s=s(G)$ be
its irregularity strength. By the definition of $s(G)$  
there is a weight function assigning to each edge $e \in E$ a
weight $w(e)$ which is a positive integer between $1$ and $s$, so
that all the sums $\sum_{e \ni v} w(e)$, $v \in V$  are 
pairwise distinct. Consider the following system of linear
inequalities in the variables $x(e), e \in E$.
$$
0 \leq x(e) \leq 1~~\mbox{for all}~~ e \in E
$$
and 
$$
\left\lfloor \sum_{e \ni v} \frac{w(e)}{s} \right \rfloor \leq 
\sum_{e \ni v} x(e) 
\leq  \left\lceil \sum_{e \ni v} \frac{w(e)}{s} \right \rceil
~~\mbox{for all}~~ v \in V.
$$
This system has a real solution given by
$x(e)=\frac{w(e)}{s}$ for all $e \in E$. Since the $V \times E$
incidence matrix of $G$ is totally unimodular there is an integer
solution as well, namely, a solution in which
$x(e) \in \{0,1\}$ for all $e \in E$. Let $H$ be the spanning
subgraph of $G$ consisting of all edges $e$ with $x(e)=1$.
For each integer $k$ the vertex $v$ can have degree $k$ in $H$ 
only if  $k-1 < \sum_{e \ni v} \frac{w(e)}{s} <k+1$, that is,
only if the integer $\sum_{e \ni v} w(e)$ is strictly
between $s(k-1)$ and $s(k+1)$. As there are only $2s-1$ 
such integers, and the integers $\sum_{e \ni v} w(e)$
are pairwise distinct, it follows that $m(H,k) \leq 2s-1$.

If $G$ is regular one can repeat the above proof replacing
$w(e)$ by $w(e)-1$ for every $e$ and replacing  $s$ by $s-1$.
This completes the proof of Theorem \ref{t17}. \hfill $\Box$

The proof of Theorem \ref{t18} is similar to the last proof, but
requires an additional argument, 
as the incidence matrix of a non-bipartite graph is
not totally unimodular. We thus prove the following lemma. Its proof is
based on some of the techniques of Discrepancy Theory, following
the approach of Beck and Fiala  in \cite{BF}. This lemma will also
be useful in the proof of Theorem \ref{t14} described in the next
section.
\begin{lemma}
\label{l41}
Let $G=(V,E)$ be a graph, and let $z: E \mapsto [0,1]$ be a weight
function assigning to each edge $e \in E$ a real weight $z(e)$ in
$[0,1]$. Then there is a function $x: E \mapsto \{0,1\}$ assigning
to each edge an integer value in $\{0,1\}$ so that for every $v \in
V$
\begin{equation}
\label{e40}
\sum_{e \ni v} z(e)-1 < \sum_{e \ni v} x(e) \leq \sum_{e \ni v} z(e)+ 1.
\end{equation}
\end{lemma}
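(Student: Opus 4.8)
The plan is to prove this by a Beck--Fiala-style iterative rounding, exploiting that for the incidence matrix of a graph the only genuine obstruction to \emph{exactly} preserving all vertex-sums is a disjoint union of odd cycles. Since an edge $e$ with $z(e)\in\{0,1\}$ contributes the same integer to $\sum_{e\ni v}z(e)$ and to $\sum_{e\ni v}x(e)$ at each of its ends once we put $x(e):=z(e)$, it suffices to treat the case $0<z(e)<1$ for all $e$, which I would assume henceforth.

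Maintain a weight function $y:E\to[0,1]$, with $y=z$ initially; call an edge \emph{live} if $y(e)\in(0,1)$ and \emph{frozen} otherwise, call a vertex \emph{active} if it meets at least two live edges, and set $\sigma(v)=\sum_{e\ni v}y(e)$. The live set only ever shrinks. Let $F$ be the set of live edges and $A$ the set of active vertices; double counting gives $2|A|\le\sum_{v\in A}\deg_F(v)\le 2|F|$, so $|A|\le|F|$. If $|A|<|F|$, pick a nonzero $\xi\in\mathbb{R}^{F}$ with $\sum_{e\ni v,\,e\in F}\xi(e)=0$ for every $v\in A$, replace $y$ by $y+t\xi$, and grow $|t|$ until some live edge hits $\{0,1\}$: this freezes an edge, leaves $\sigma(v)$ fixed for active $v$, and moves $\sigma(v)$ by less than $1$ for a vertex with a single live edge. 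If $|A|=|F|$, the equality case of the double counting forces $F$ to be a disjoint union of cycles all of whose vertices are active; an even cycle again supplies such a $\xi$ (the alternating $\pm1$ vector on its edges), so assume all these cycles are odd. For an odd cycle $C$ with live edges $e_1,\dots,e_{2k+1}$, use $\xi\equiv 1$ on $E(C)$; since $\min_i y(e_i)+\min_i(1-y(e_i))\le 1$, one of these minima is $\le\tfrac12$, and moving $y$ by that minimum along $\pm\xi$ freezes an edge of $C$ and shifts $\sigma(v)$, for every $v\in V(C)$, by a common quantity of absolute value at most $1$, with all other sums unchanged. Since $|F|$ drops each step, after finitely many steps $y$ is integral; set $x:=y$.

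For the bound, fix $v$ and let $\Delta(v)=\sum_{e\ni v}x(e)-\sum_{e\ni v}z(e)$. While $v$ is active (and before the unique odd-cycle step that can touch it) one has $\sigma(v)=\sum_{e\ni v}z(e)$; once $v$ is no longer active it retains at most one live edge, whose value thereafter changes by less than $1$; and, because the live set only shrinks, the moment the odd cycle through $v$ is broken $v$ leaves all cycles of $F$ forever, so it absorbs at most one odd-cycle shift. With a careful bookkeeping of signs in the odd-cycle steps these facts should combine to give $\sum_{e\ni v}z(e)-1<\sum_{e\ni v}x(e)\le\sum_{e\ni v}z(e)+1$; the one-sided strictness is genuinely forced, since e.g.\ a triangle with all weights $\tfrac12$ admits no spanning subgraph with every degree equal to $1$, so the open interval $(\sum_{e\ni v}z(e)-1,\ \sum_{e\ni v}z(e)+1)$ would not suffice.

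The main obstacle is exactly this last step. A crude count only yields $|\Delta(v)|<2$ --- which is precisely what the plain Beck--Fiala theorem gives for a set system of maximum degree $2$ --- and squeezing it down to (essentially) $1$ means controlling the interaction between the single odd-cycle shift a vertex absorbs and the subsequent drift of its last live edge, so that the two do not reinforce. I expect one handles this either by re-pinning each shifted vertex at its new sum and repeating the rank count on the smaller system, or by selecting within each odd cycle one vertex to ``release'' and rounding its two incident cycle-edges in opposite directions; everything else is routine. (Equivalently, the lemma asserts that the polytope $\{x\in[0,1]^E:\lfloor d_v\rfloor\le\sum_{e\ni v}x(e)\le\lfloor d_v\rfloor+1,\ v\in V\}$, with $d_v=\sum_{e\ni v}z(e)$, contains a lattice point whenever it is nonempty.)
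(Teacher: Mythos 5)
Your framework is the right one and matches the paper's up to the last step: iterative Beck--Fiala rounding on the incidence matrix, constraints imposed only at vertices of live degree at least $2$, and the identification of odd cycles as the sole obstruction. The reduction to $z(e)\in(0,1)$, the counting argument $|A|\le|F|$ with its equality case, and the observation (via the triangle with weights $1/2$) that the symmetric open interval is unattainable are all correct. But the lemma is not proved: as you concede, your odd-cycle move only yields $|\Delta(v)|<2$. The specific defect is that shifting an odd cycle uniformly along $\xi\equiv 1$ until one edge freezes does two bad things at once: it perturbs $\sigma(v)$ by up to $1$ for every vertex of the cycle, \emph{and} it leaves the other $2k$ edges of the cycle live, so that the same vertices (in particular the two endpoints of the frozen edge, which drop to live degree $1$) can later absorb a further drift of up to $1$ from their last live edge. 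The two error sources stack, and no bookkeeping of signs rescues this, because the direction of the cycle shift and the direction of the later drift are independent.

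The paper's resolution is to never perform that uniform shift. One runs the exact-preservation steps only on floating components that are \emph{not} odd cycles or single edges (such a component has more edges than vertices of degree $\ge 2$, so a sum-preserving line exists), and once every floating component is an odd cycle or an isolated edge the algorithm \emph{terminates} by rounding each remaining floating edge to its nearest integer, with the tie $1/2$ always sent to $1$. This makes the two error sources mutually exclusive: a vertex that reaches the final stage inside an odd cycle has had its sum preserved exactly and has precisely two floating edges, each moved by at most $1/2$ in the rounding, so its total error is at most $1$, and the value $-1$ is excluded by the tie-breaking rule (only $+1$ can occur, when both incident values equal $1/2$); a vertex whose floating degree ever drops to $1$ can never re-enter a cycle (the floating graph only shrinks) and accrues only the strictly-less-than-$1$ drift of its last floating edge. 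That is exactly the missing mechanism: round the whole odd cycle in one shot at the end rather than breaking it edge by edge. Your second suggested patch gestures in this direction but is not carried out, so as it stands the proof has a genuine gap at its decisive step.
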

Note that the deviation of $1$ in this
inequality is tight, as shown by any odd cycle and the function
$z$ assigning weight $1/2$ to each of its edges.
\begin{proof}
We describe an algorithm for generating the required numbers $x(e)$. 
Think of these values as variables. 
During
the algorithm, the variables $x(e)$ will always lie in the continuous
interval $[0,1]$. Call a variable $x(e)$ fixed if $x(e) \in \{0,1\}$,
otherwise call it floating. At the beginning of the algorithm, some
(or all) variables $x(e)$ will possibly be floating, and as the
algorithm proceeds, floating variables will become fixed. Once
fixed, a variable does not change anymore during the algorithm, and
at the end all variables will be fixed. For convenience, call an 
edge $e$ floating iff $x(e)$ is floating.

For each edge $e \in E$, let $y_e$ denote the corresponding column
of the $V \times E$ incidence matrix of $G$, that is, the vector
of length $V$ defined by $y_e(v)=1$ if $v \in e$ and $y_e(v)=0$
otherwise. 

Start the algorithm with $x(e)=z(e)$ for all $e \in E$. As long as
the vectors $y_e$ corresponding to the floating edges $e$ 
(assuming there are such edges) are not linearly
independent over the reals, let 
$\sum_{e \in E'} c_e y_e=0$ be a linear dependence, where $E'$ is a
set of floating edges and $c_e \neq 0$ for all $e \in E'$. 
Note that for any real $\nu$, if we replace $x(e)$ by $x(e)+\nu
c_e$ then the values of the sums
\begin{equation}
\label{e41}
\sum_{e \ni v} x(e)~~\mbox{for all}~~ v \in V 
\end{equation}
stay unchanged. As $\nu$ varies this determines a line
of values of the variables $x(e)$ (in which the only ones that
change are the variables $x(e)$ for $e \in E'$) so that the sums
in (\ref{e41}) stay fixed along the line. By choosing $\nu$
appropriately we can find a point along this line in which
all variables stay in $[0,1]$ and at least one of the floating
variables in $E'$ reaches $0$ or $1$. We now update the variables
$x(e)$ as determined by this point, thus fixing at least one of the
floating variables. Continuing in this manner the algorithm finds 
an assignment of the variables $x(e)$ so that  for each 
$v \in V$, $\sum_{e \ni v} x(e)=\sum_{e \ni v} z(e)$ and 
the set of vectors $y_e, e \in E'$, where $E'$ is the set of
floating edges, is linearly independent. Note that this implies
that the set of edges in each connected component of the graph
$(V,E')$ is either a tree, or contains exactly one cycle, which is
odd.  

As long as there is a connected component consisting of 
floating edges, which is not an odd cycle or a single edge, 
let $V''$ be the set of
all vertices of such a component whose degree in the component
exceeds $1$. Let $E''$ be the set of edges of this component (recall
that all of these edges are floating). Consider the following
system of linear equations.
\begin{equation}
\label{e42}
\sum_{e \ni v} x(e)=\sum_{e \ni v} z(e)~~
\mbox{for all}~~ v \in V''.
\end{equation}
This system is viewed as one in which the only variables are
$x(e)$ for $e \in E''$. The other $x(e)$ appearing
in the system are already fixed, and are thus considered as
constants, and the values $z(e)$ are also constants. It is easy to
check that the number of variables in this system, which is
$|E''|$, exceeds the number of equations, which is the number of
vertices of degree at least $2$ in the component. Therefore there
is a line of solutions, and as before we move to a point on this
line which keeps all variables $x(e)$ in $[0,1]$ and fixes at least
one variable $x(e)$ for some $e \in E''$, shifting it to either $0$
or $1$. Note, crucially, that each of the sums
$\sum_{e \ni v} x(e)$ for $v \in V''$ stays unchanged, but the value
of this sum for vertices of degree $1$ in the component may
change.

Continuing this process we keep reducing the number of floating
edges. When the graph of floating edges contains only connected
components which are odd cycles or isolated edges we finish
by rounding each floating variable $x(e)$ to either $0$ or $1$,
whichever is closer to its current value, where
$x(e)=1/2$ is always rounded to $1$. 
Once this
is done, all variables $x(e)$ are fixed, that is $x(e) \in \{0,1\}$
for all $e$. It remains to show that (\ref{e40}) holds for 
each $v \in V$. To this end note that 
as long as the degree of $v$ in the graph
consisting of all floating edges is at least $2$, and the component
in   which it lies is not an odd cycle, the value of the sum
$\sum_{e \ni v} x(e)$ stays unchanged (and is thus equal exactly to
$\sum_{e \ni v} z(e)$) even after modifying the variables $x(e)$
in the corresponding step of the algorithm. Therefore,
at the first time  the degree of $v$
in this floating graph (the graph of floating edges) becomes $1$,
if this ever happens, the sum $\sum_{e \ni v} x(e)$ is still
exactly $\sum_{e \ni v} z(e)$. Afterwards this sum can change only by
the change in the value of the unique floating edge incident with
it, which is less than $1$ (as this value has been in the open interval
$(0,1)$ and will end being either $0$ or $1$). The only case in
which the final sum  $\sum_{e \ni v} x(e)$ can differ by $1$
from  $\sum_{e \ni v} z(e)$ is if the final step in which all 
floating edges incident with $v$ become fixed is a step in which
the connected component of $v$ in the floating graph is an odd
cycle, $x(e)=1/2$ for both edges of this component incident with
$v$, and both are rounded to the same value $1$. In this case
(\ref{e40}) holds with equality, and in all other cases it holds
with a strict inequality. This completes the proof of the
lemma.
\end{proof}
\vspace{0.2cm}

\noindent
{\bf Proof of Theorem \ref{t18}:}\, 
The proof is similar to that of Theorem \ref{t17}, replacing the
argument using the total unimodularity of the incidence matrix of 
the graph by Lemma \ref{l41}.
Let $G=(V,E)$ be a graph 
let $s=s(G)$ be
its irregularity strength.   Thus
there is a weight function assigning to each edge $e \in E$ a
weight $w(e)$ which is a positive integer between $1$ and $s$, so
that all the sums $\sum_{e \ni v} w(e)$, $v \in V$  are 
pairwise distinct. Define $z: E \mapsto [0,1]$
by $z(e)=w(e)/s$ for each $e \in E$.
By Lemma \ref{l41} there is a function $x: E \mapsto \{0,1\}$
so that for every $v \in V$ (\ref{e40}) holds.
Let $H$ be the spanning
subgraph of $G$ consisting of all edges $e$ with $x(e)=1$.
For each integer $k$ the vertex $v$ can have degree $k$ in $H$ 
only if  
$$
k-1 \leq \sum_{e \ni v} z(e) =\sum_{e \ni v} \frac{w(e)}{s}
< k+1,
$$
that is,
only if the integer $\sum_{e \ni v} w(e)$ is at least
$s(k-1)$ and strictly smaller than $s(k+1)$. As there are only $2s$ 
such integers, and the integers $\sum_{e \ni v} w(e)$
are pairwise distinct, it follows that $m(H,k) \leq 2s$.

If $G$ is regular one can repeat the above proof replacing
$w(e)$ by $w(e)-1$ for every $e$ and replacing  $s$ by $s-1$.
This completes the proof. \hfill $\Box$

\section{Proof of Theorem \ref{t14}}
\label{sec:proofThm1.4}
In this section we describe the proof of Theorem \ref{t14} for all
$\delta $ and $n$ where $n$ is sufficiently large. 
If $\delta=o((n/\log n)^{1/4})$ the assertion of the theorem holds, 
as proved
in Section 2. We thus can and will assume that $\delta$ is larger.
In particular it will be convenient to fix a small $\epsilon>0$ and 
assume that $\dd \geq \ln^{2/\epsilon} n (\ln \ln n)^{1/\epsilon}$.
The  argument here is more
complicated than the one for smaller $\delta$. To simplify the
presentation we omit, throughout the proof,
all floor and ceiling signs whenever these are not crucial
(but leave these signs when this is important). We
further assume whenever this is needed that $n$ is sufficiently
large as a function of $\epsilon$. The explicit version of the 
theorem we prove here is the following. 
\begin{theo}
\label{thm:delta}
Fix $\epsilon \in (0,1/4)$. Every graph $G$ with $n$ vertices
and minimum degree $\dd$ with $\dd \geq \ln^{2/\epsilon} 
n (\ln \ln n)^{1/\epsilon}$ 
and $n$
sufficiently large in terms of $\epsilon$ contains a spanning
subgraph $H$ satisfying
$$
m(H) <  \left\lceil \frac{n}{\dd} + \frac{5\sqrt{(n/\dd)(\ln n)}}
{\dd^{1/4}} \right\rceil + \left\lfloor \frac{2016 n \ln
n \ln \ln n}{\dd^{1+\epsilon}} \right\rfloor + 1.
$$

In particular, when $\delta^{1+\epsilon} > 2016 n \ln n \ln \ln n$, 
and $n$ is sufficiently large, then  
$$
m(H) \leq \lceil n/(\dd+1) \rceil + 2.
$$
\end{theo}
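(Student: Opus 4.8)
The plan is to follow the same two-stage strategy as in the proof of Proposition~\ref{p27} for smaller $\delta$ — first reduce to a graph of bounded maximum degree, then spread the degrees out using the random weighting of Lemma~\ref{l22} — but to replace the crude concentration estimate there by one that couples the random-weighting argument with the discrepancy rounding of Lemma~\ref{l41}. The point of bringing in Lemma~\ref{l41} is that it converts a near-optimal fractional configuration into an honest spanning subgraph at the cost of only a $\pm 1$ change in every vertex degree and, crucially, with no dependence at all on the maximum degree, which after the first stage will be far larger than $\delta$.

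First I would run the reduction of Proposition~\ref{p27} essentially verbatim: delete edges joining two vertices of degree exceeding $\delta$ until every edge meets a vertex of degree exactly $\delta$; let $C$ be the set of vertices of degree exceeding a threshold $D$; and split each $v\in C$ into $\lfloor\deg(v)/\delta\rfloor$ new vertices of degree between $\delta$ and $2\delta$. Since all edges out of $C$ land on vertices of degree $\delta$, double counting gives $|C|\le n\delta/D$, so I would take $D=\delta^{2+\epsilon}/(2016\ln n\ln\ln n)$, which is at least $\delta$ because the hypothesis $\delta\ge\ln^{2/\epsilon}n(\ln\ln n)^{1/\epsilon}$ gives $\delta^{\epsilon}\ge\ln^2 n\ln\ln n$; then $|C|$ is at most the floor term in the statement, and these are the only vertices over whose final degree we give up control. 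The outcome is a graph $G''$ on at most $2n$ vertices with minimum degree $\delta$ and maximum degree at most $D$, together with a distinguished set $X$ of $n$ tracked vertices containing all the original uncontrolled-for vertices, i.e. exactly the situation of Proposition~\ref{p26}.

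The core step is then to find a spanning subgraph $H$ of $G''$ in which no degree is attained by more than $n/\delta+5\sqrt{(n/\delta)\ln n}/\delta^{1/4}$ vertices of $X$. I would start from the subgraph $H_0$ of Lemma~\ref{l22}: with the weights $x(v)$ independent and uniform on $[0,1]$, one has $\Pr[\deg_{H_0}(v)=k]=1/(\deg(v)+1)\le 1/(\delta+1)$ for every $v$ and $k$, so the expected number of tracked vertices of any given degree is at most $n/(\delta+1)$, and, conditioned on $x(v)$, $\deg_{H_0}(v)$ is binomial and hence by Lemma~\ref{l21} lies within $O(\sqrt{D\ln n})$ of $\deg(v)x(v)$. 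The delicate task is to turn this into a subgraph whose degree-\emph{counts} are concentrated near their expectations: a Hajnal--Szemer\'edi partition (Lemma~\ref{l23}) of the vertices of $G''$ into independent sets of the ``distance at most $2$'' auxiliary graph makes the events $\{\deg(v)=k\}$ independent within each part, so Lemma~\ref{l21} applies there, but a part can be as small as $|X|/D^2$ and a direct Chernoff sum over the parts still loses a factor polynomial in $D$. This is where Lemma~\ref{l41} enters — instead of analysing $H_0$ directly, one passes to a fractional weighting built from the $x(v)$ whose vertex-sums are both concentrated and well spread, and uses Lemma~\ref{l41} to round it to $H$ with a $\pm 1$, $D$-free error. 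Combining the partition, the Chernoff bounds over the parts, a union bound over the $O(D)$ degree values, and the $D$-free rounding yields each degree repeated at most $n/(\delta+1)$ plus a deviation of order $\sqrt{(n/\delta)\ln n}/\delta^{1/4}$ times among the tracked vertices; adding back the at most $\lfloor 2016 n\ln n\ln\ln n/\delta^{1+\epsilon}\rfloor$ discarded vertices of $C$ and one unit of ceiling slack gives the displayed bound. The ``in particular'' statement then follows by arithmetic: when $\delta^{1+\epsilon}>2016 n\ln n\ln\ln n$ the floor term is $0$, and since $\epsilon<1/4$ this forces $\delta^{5/4}>n$, hence $\delta>n^{4/5}$, so $5\sqrt{(n/\delta)\ln n}/\delta^{1/4}<1$ for $n$ large; thus $\lceil n/\delta+5\sqrt{(n/\delta)\ln n}/\delta^{1/4}\rceil\le\lceil n/\delta\rceil+1\le\lceil n/(\delta+1)\rceil+2$ because $0\le n/\delta-n/(\delta+1)<n/\delta^2<1$, and the strict inequality $m(H)<(\lceil n/(\delta+1)\rceil+2)+1$ forces $m(H)\le\lceil n/(\delta+1)\rceil+2$.

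The step I expect to be the main obstacle is the concentration argument in the previous paragraph. A direct martingale bound on $m(H_0,k)$ as a function of the $x(v)$ has Lipschitz constant of order $D$, and a direct second-moment bound accumulates $\Theta(nD^2)$ weakly correlated distance-$2$ pairs, so either route produces an error polynomial in $D\gg\delta$ and is useless. Circumventing this requires genuinely exploiting that vertices of very large degree are few and contribute negligibly to each degree class, together with the $D$-insensitivity of Lemma~\ref{l41}, and then balancing the splitting threshold $D$ against the quality of the resulting concentration — it is this balance that produces the exponent $1/4$ in the error term and forces the lower bound $\delta\ge\ln^{2/\epsilon}n(\ln\ln n)^{1/\epsilon}$ on the range of applicability.
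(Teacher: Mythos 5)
Your proposal has a genuine gap at its core step, and you in fact flag it yourself in the final paragraph without resolving it. After your reduction to a graph $G''$ of maximum degree $D=\delta^{2+\epsilon}/(2016\ln n\ln\ln n)$, you propose to control degree multiplicities via the random subgraph of Lemma~\ref{l22} together with a Hajnal--Szemer\'edi partition of the distance-$2$ auxiliary graph. But that auxiliary graph has maximum degree about $D^2\approx\delta^{4+2\epsilon}$ up to polylogarithmic factors, so each part has size about $n/\delta^{4+2\epsilon}$; for $\delta\gtrsim n^{1/4}$ the parts are essentially empty, and anywhere near the upper end of the theorem's range (where $\delta$ can be close to $n$) the Chernoff-plus-union-bound route gives nothing. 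This is exactly why Propositions~\ref{p24}--\ref{p27} are confined to $\delta^4=o(n/\log n)$. Your claim that Lemma~\ref{l41} circumvents this is unsupported: that lemma only guarantees that rounding a fractional edge weighting perturbs each vertex sum by at most $1$; it does nothing to make the degree \emph{counts} of the fractional configuration concentrated, and you never exhibit the fractional weighting ``whose vertex-sums are both concentrated and well spread'' or explain why it exists. The entire difficulty of the theorem lives in that unspecified object.

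The paper's proof rests on a different idea your sketch is missing: rather than hoping a random subgraph has well-spread degrees and proving concentration, it \emph{prescribes} a target degree for each vertex and constructs a weighting achieving it. Vertices are randomly partitioned into $B=B_1\cup\dots\cup B_{\delta-s^*}$ and a small set $S=S_1\cup\dots\cup S_k$ with $s^*=\delta^{1/2+\epsilon}$; putting in all edges from $B_i$ to the top $i$ classes of $B$ plus random ``active'' edges to $S$ gives each $v\in B_i$ weight within $O(\sqrt{\deg(v)}\ln n)$ of a linear function $h_B(\deg(v),i)$, and a reservoir of ``removable'' active edges lets one correct the weight to exactly $\lfloor h_B(\deg(v),i)\rfloor$ and then redistribute within windows of length $\lfloor\sqrt\delta\rfloor$. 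The multiplicity bound for $B$ then comes from counting how many pairs $(\deg(v),Z(v))$ can land in a given window --- a property of the designed function $h_B$ and the uniform random index $Z(v)$, not of concentration of a random subgraph. The small set $S$ is handled by a separate sequential Kalkowski--Karo\'nski--Pfender-type argument with quarter-integer weights, and only there is Lemma~\ref{l41} invoked, to round those weights at a cost of $\pm1$ against a designed separation of $11/4$ between potential collisions. None of this machinery appears in your proposal, so the central claim --- at most $n/\delta+5\sqrt{(n/\delta)\ln n}/\delta^{1/4}$ tracked vertices of any one degree --- is not proved. (Your closing arithmetic for the ``in particular'' clause is fine, but it rests on the unproved main bound.)
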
 
The constants $5, 2016$ and the assumption 
$\delta^{1+\epsilon} > 2016 n \ln n \ln \ln n$ can be improved,
but as this will not lead to any significant change in
the asymptotic  statement given in
Theorem \ref{t14} it is convenient to prove the result as stated
above.

In the proof we  assign
binary weights to the edges of the graph $G$, where weight
one corresponds to edges in $H$ and zero to non-edges. The weight
of a vertex will always be the sum of weights of the 
edges incident to it.
We use $\deg(v)$ to denote the degree of the vertex $v$ in $G$. By
assumption $\deg(v) \geq \dd$ for every $v$.

Put $\sstar = \dd^{1/2+\epsilon}, k = \dd^{1/2-\epsilon}/\ln \ln n$. By
our assumption on $\dd$ we have $k \geq \ln^2 n$. We will assume
that both $\sstar$ and $k$ are integers, and $\dd-\sstar$ is divisible
by $\lfloor \sqrt{\dd}\rfloor$ \footnote{There is always a value of
$\sstar$ in the interval $[\lfloor \dd^{1/2+\epsilon} \rfloor,\lfloor
\dd^{1/2+\epsilon} \rfloor + \sqrt{\dd}]$  such that $\dd-\sstar$
is divisible by $\lfloor \sqrt{\dd}\rfloor$. When $\epsilon$ is fixed
and $n$ sufficiently large, such value of $\sstar$ is asymptotically
$\dd^{1/2+\epsilon}$.}.  We start by partitioning the vertices randomly
into a big set $B$ and a small set $S$, where each is further partitioned
into $B = B_1 \dots \cup B_{\dd-\sstar}$ and $S =  S_1 \cup \dots
\cup S_{k}$. This random partition is achieved in the following way.
Let $X_v$, $v \in V(G)$ be i.i.d.~ uniform random variables $X_v\sim
U[0,1]$.  For each integer $1 \leq i \leq \dd-\sstar$, if $X_v \in
[\frac{i-1}{\dd}, \frac{i}{\dd })$, then place $v$ in $B_i$.  For each
integer $1 \leq j \leq k-1$, if $X_v \in \left[\frac{\dd-\sstar}{\dd} +
\frac{(j-1)\sstar}{\dd k},\frac{\dd-\sstar}{\dd} + \frac{j \sstar}{\dd
k}\right)$, place $v$ in $S_j$; if $X_v \in \left[\frac{\dd-\sstar}{\dd}
+ \frac{(k-1)\sstar}{\dd k},1\right]$, place $v$ in $S_k$.

The weight assignment will be done in three steps. The first two steps only
concern edges in $B$ and between $B$ and $S$. The last step only concerns
edges within $S$.  We will randomly label some edges between $S$ and $B$
to be {\it active} and {\it removable}. Active edges denote the edges
between $S$ and $B$ that will be assigned weight one in Step 1, and 
active and removable edges denote ones 
whose weights can be modified back to zero in Step 2. For
each $1 \leq i \leq k$, vertex $v \in S_i$ and its neighbor $u\in B$,
the edge $uv$ is active randomly and 
independently with probability ${\frac{\dd - 4 \sstar
i }{\dd-\sstar}}$. It is  removable randomly and independently with
probability $\frac{32 \dd  \ln n}{\sstar \sqrt{\deg(u)}}$.

The next lemma shows that the quantities we 
care about in $G$ are not far from
their expected values with high probability.

\begin{lemma} 
\label{lem:main}
Let $\epsilon\in(0,1/4)$. Suppose $n$ is sufficiently large in terms
of $\epsilon$ and assume that 
$\dd^{\epsilon} \geq \ln n \ln \ln n$.
Let $h:[n]\times [\dd
- \sstar] \to \mathbb{R}$ be a function  $h(d, i) = c_1(d) i +c_2 d +
c_3 \sqrt{d}  + c_4$ where $c_1(d) \geq 1$ for all $d \geq \dd$ and $c_2, c_3, c_4
\in \mathbb{R}$. Then, with probability at least $1 - 7/n^2$, 
the following statements hold simultaneously with the random
choices  described above.
\begin{enumerate}[(i)]
\item 
\label{item:Bi2} 
For any integer $0 \leq j \leq n-1$, the number of vertices $v \in B$
satisfying $h(\deg(v), Z(v)) \in [j, j+\lfloor \sqrt{\delta} \rfloor)$
is at most $ \lfloor \sqrt{\delta} \rfloor \frac{n}{\dd} +4 \sqrt{\frac{n
}{\dd} \cdot \sqrt{\delta} \ln n}$, where $Z(v) \in [\dd - \sstar]$
is the random variable satisfying $v \in B_{Z(v)}$.
\item 
\label{item:vtoS} 
For any vertex $v$, its degree to $S$ 
is in the interval $[0.5 \sstar \deg(v)/ \dd, 1.5 \sstar \deg(v)/ \dd]$. 
\item
\label{item:vBi} 
For each $1 \leq i \leq \dd - \sstar$ and for each vertex $v \in B_i$, its
degree to $\{\bigcup B_j, \dd-\sstar - i + 1 \leq  j \leq \dd - \sstar\}$
is in 
$\left[\frac{i\deg(v)}{\dd} -12\sqrt{\frac{i\deg(v)}{\dd}} \ln n,
\ \frac{i\deg(v)}{\dd} + 12\sqrt{\frac{i\deg(v)}{\dd}} \ln n\right]$.
\item 
\label{item:vStoBactive}
For each $1 \leq i \leq k$ and each vertex $v \in S_i$, the number
of edges between $v$ and $B$ that are active is in the interval
$$\left[\frac{(\dd - 4\sstar i)\deg(v)}{\dd} - \sqrt{\deg(v) }\ln n, 
\frac{(\dd - 4\sstar i)\deg(v)}{\dd} +\sqrt{\deg(v)} \ln
n\right].$$  
The number of edges between 
$v \in S$ and $B$ that are both active and removable
is at most $ \frac{33(\dd - 4\sstar i)\deg(v) \ln n}{\sqrt{\dd} \sstar}$.
\item \label{item:vBtoSiactive} 
For each $1 \leq i \leq k$ and each $u \in B$,  the number of
edges between $u$ and $S_i$ that are active is in the interval
$$\left[\frac{\sstar \deg(u)}{\dd k} \cdot \frac{\dd - 4\sstar
i}{\dd - \sstar} - \sqrt{\frac{\deg(u) \sstar}{\dd k}} \ln n, 
\frac{\sstar \deg(u)}{\dd k} \cdot \frac{\dd - 4\sstar i}{\dd - \sstar}
+ \sqrt{\frac{\deg(u) \sstar}{\dd k}} \ln n\right].$$  
The number of
edges between $u \in B$ and $S$  
that are both active and removable is at least
${27\sqrt{\deg(u)} \ln n}$.
\end{enumerate}
\end{lemma}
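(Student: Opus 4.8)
The plan is to prove Lemma~\ref{lem:main} by establishing each of the five statements as a high-probability event and then taking a union bound, so that the total failure probability is at most $7/n^2$. Each statement is a concentration claim about a sum of independent (or negatively correlated) indicator random variables arising from the independent choices $X_v$, the independent ``active'' labels, and the independent ``removable'' labels; the uniform tool throughout is Chernoff's inequality (Lemma~\ref{l21}). The key structural observation that makes this work is that conditioning on the placement of vertices into the sets $B_i,S_j$ (i.e.\ on all the $X_v$), the active-labels and the removable-labels are still independent, so statements \eqref{item:vStoBactive} and \eqref{item:vBtoSiactive} can be analyzed conditionally on the partition and then combined.

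First I would handle \eqref{item:vtoS} and \eqref{item:vBi}: for a fixed vertex $v$, its degree into $S$ is $\mathrm{Bin}(\deg(v),\sstar/\dd)$ (each neighbor lands in $S$ with probability $\sstar/\dd$), with mean $\sstar\deg(v)/\dd \geq \sstar$; since $\sstar = \dd^{1/2+\epsilon}$ is a growing power of $\dd$ and hence of $\ln n$, Chernoff with relative deviation $1/2$ gives failure probability $e^{-\Omega(\sstar)} \ll n^{-4}$, and a union bound over the $\leq n$ vertices costs a factor $n$. Statement \eqref{item:vBi} is the same computation with success probability $i/\dd$ for the relevant neighbor and an additive deviation of $12\sqrt{i\deg(v)/\dd}\,\ln n$; the point is that $i\deg(v)/\dd \geq i$, and one needs $\ln^2 n \leq i\deg(v)/\dd$ or else the deviation exceeds the mean, in which case Chernoff's ``$a \geq mp$'' branch $2e^{-a/3}$ applies and is still $\ll n^{-4}$ because $a \geq 12\ln n$. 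Then \eqref{item:vStoBactive} and \eqref{item:vBtoSiactive}: for $v \in S_i$, the number of active edges to $B$ is, conditionally on the partition, a sum of $\deg_B(v)$ independent indicators each with probability $(\dd-4\sstar i)/(\dd-\sstar)$; using \eqref{item:vtoS} to control $\deg_B(v)$ and Chernoff again gives the stated interval with the additive slack $\sqrt{\deg(v)}\ln n$. The ``active and removable'' counts are governed by the product of the two independent events; their expectation is of order $\frac{(\dd-4\sstar i)\deg(v)}{\dd}\cdot\frac{\dd\ln n}{\sstar\sqrt{\deg(u)}}$, and one needs to check this mean is large enough (at least a constant times $\ln^2 n$) so that Chernoff yields $\ll n^{-4}$ per vertex; this is where the hypotheses $\dd^\epsilon \geq \ln n\ln\ln n$ and the definition of $\sstar$ get used, and the one-sided bounds (at most / at least) are exactly what a one-sided Chernoff delivers.

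The main obstacle is statement \eqref{item:Bi2}, since it is a \emph{global} counting statement over all vertices of $B$ simultaneously rather than a per-vertex claim, and the quantity $h(\deg(v),Z(v))$ couples the (deterministic) degree with the random block index $Z(v)$. Here I would fix the ``window'' parameter $j$, and for each vertex $v$ note that $v$ contributes to the count iff $Z(v)$ lands in a set of consecutive integers $I_v \subseteq [\dd-\sstar]$ determined by $h$, $\deg(v)$, $j$ and $\lfloor\sqrt\delta\rfloor$; because $c_1(\deg(v)) \geq 1$, the condition $h(\deg(v),i)\in[j,j+\lfloor\sqrt\delta\rfloor)$ constrains $i$ to an interval of length at most $\lfloor\sqrt\delta\rfloor$, so $\Pr[v \text{ contributes}] = |I_v|/\dd \leq \lfloor\sqrt\delta\rfloor/\dd$. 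The contributions of distinct vertices are independent (the $X_v$ are independent), so the total count is stochastically dominated by $\mathrm{Bin}(n,\lfloor\sqrt\delta\rfloor/\dd)$ with mean $\leq \lfloor\sqrt\delta\rfloor\, n/\dd$; Chernoff's upper-tail bound with deviation $a = 4\sqrt{(n/\dd)\sqrt\delta\,\ln n}$ gives failure probability $\leq e^{-a^2/(3\lfloor\sqrt\delta\rfloor n/\dd)} = e^{-(16/3)\ln n} \leq n^{-5}$ (using the ``$a \leq mp$'' branch when applicable, and otherwise the $e^{-a/3}$ branch which is even smaller since $a \geq \Omega(\ln n)$). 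A union bound over the $n$ choices of $j$ costs another factor $n$, leaving $n^{-4}$.

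Finally I would collect the pieces: \eqref{item:vtoS}, \eqref{item:vBi}, \eqref{item:vStoBactive}, \eqref{item:vBtoSiactive} each fail with probability at most, say, $n^{-3}$ after their internal union bounds over vertices and over $i$, and \eqref{item:Bi2} fails with probability at most $n^{-4}$; summing the five failure probabilities (with the crude bound that a handful of $n^{-3}$ terms is below $7/n^2$ for $n$ large) shows that with probability at least $1-7/n^2$ all five statements hold simultaneously, which is the assertion of the lemma. The only genuinely delicate bookkeeping is making sure, in each application of Lemma~\ref{l21}, that we are in the regime where the claimed additive deviation is at most the mean (so that the Gaussian-type bound $e^{-a^2/3mp}$ applies and beats the relevant power of $n$), and otherwise invoking the linear-tail bound $2e^{-a/3}$; this is exactly what forces the lower-bound hypotheses $\dd \geq \ln^{2/\epsilon}n(\ln\ln n)^{1/\epsilon}$ and $\dd^\epsilon \geq \ln n\ln\ln n$, and I would flag those uses explicitly at each step rather than grinding through the inequalities.
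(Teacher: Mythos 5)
Your overall strategy coincides with the paper's: each of the five items is a concentration statement for a sum of independent indicators, handled by Chernoff's inequality (Lemma~\ref{l21}) plus a union bound, and your key observation for \eqref{item:Bi2} --- that $c_1(\deg(v))\geq 1$ confines the admissible block indices $i$ to an interval of length at most $\lfloor\sqrt{\delta}\rfloor$, so each vertex contributes independently with probability at most $\lfloor\sqrt{\delta}\rfloor/\dd$ --- is exactly the paper's argument.

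There is, however, one step in your route that does not work as described: the treatment of \eqref{item:vStoBactive} (and implicitly \eqref{item:vBtoSiactive}) by first conditioning on the partition and then using \eqref{item:vtoS} to control $\deg_B(v)$. Conditionally on the partition, the number of active edges at $v\in S_i$ has mean $\deg_B(v)\cdot\frac{\dd-4\sstar i}{\dd-\sstar}$, whereas the interval in \eqref{item:vStoBactive} is centred at $\deg(v)\cdot\frac{\dd-\sstar}{\dd}\cdot\frac{\dd-4\sstar i}{\dd-\sstar}$; the gap between these two centres is $\left|\deg_S(v)-\frac{\sstar\deg(v)}{\dd}\right|\cdot\frac{\dd-4\sstar i}{\dd-\sstar}$, and \eqref{item:vtoS} only bounds $\left|\deg_S(v)-\frac{\sstar\deg(v)}{\dd}\right|$ by $0.5\,\sstar\deg(v)/\dd=0.5\,\deg(v)\,\dd^{-1/2+\epsilon}$. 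Since $\dd^{\epsilon}\geq\ln n\ln\ln n$, this is \emph{larger} than the permitted slack $\sqrt{\deg(v)}\,\ln n$ (already for $\deg(v)=\dd$ one has $0.5\,\dd^{1/2+\epsilon}\geq 0.5\sqrt{\dd}\,\ln n\ln\ln n>\sqrt{\dd}\,\ln n$), so the two-stage argument cannot land in the claimed interval. The paper avoids this entirely by not conditioning: for $v\in S_i$ and a neighbour $u$, the event ``$uv$ is active'' is the intersection of the independent events $u\in B$ and ``the active label is on,'' so it occurs with probability exactly $\frac{\dd-\sstar}{\dd}\cdot\frac{\dd-4\sstar i}{\dd-\sstar}=\frac{\dd-4\sstar i}{\dd}$, independently over the $\deg(v)$ edges at $v$; a single application of Chernoff with deviation $\sqrt{\deg(v)}\,\ln n$ then gives failure probability $2e^{-\ln^2 n/3}$ per vertex. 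You should restructure \eqref{item:vStoBactive} and \eqref{item:vBtoSiactive} in this unconditional form (the active-and-removable counts are handled the same way, with the product of the three independent probabilities). The rest of your write-up, including the careful case distinction between the $a\leq mp$ and $a\geq mp$ branches of Lemma~\ref{l21}, matches the paper.
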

\begin{proof}
We first prove (\ref{item:Bi2}). 
Given $j$ and $v$,  since $c_1(\deg(v)) \geq 1$ as $\deg(v) \geq \dd$, there is 
at most one integer $1 \leq i 
\leq \dd - \sstar$ such that $h(\deg(v), i)  = c_1(\deg(v)) i  + c_2 \cdot \deg(v) + c_3
{\sqrt{\deg(v)}}  + c_4 \in [j, j+1)$. Thus each vertex independently has
probability at most $\lfloor \sqrt{\delta} \rfloor / \dd$ 
to satisfy $h(\deg(v),
Z(v)) \in [j, j+\lfloor \sqrt{\delta} \rfloor)$. By 
Chernoff's Inequality (Lemma \ref{l21}) and 
a union bound
over $0 \leq j \leq n-1$ and $v$, the probability that (\ref{item:Bi2})
is violated is at most $ n^2 e^{-(16(n/\delta) 
(\sqrt{\delta}) \ln n)/(3 (n/\dd)(\sqrt{\delta}))}
< n^2 e^{-4 \ln n} = 1/n^2$.

To prove (\ref{item:vtoS}), note that for each vertex $v$, each of
its neighbors independently has probability $\frac{\sstar}{\dd}$
to be in $S$. Therefore its expected degree in $S$ is
$\frac{\deg(v)\sstar}{\dd}$. By Chernoff's Inequality and 
a union bound over $v$,
(\ref{item:vtoS}) is violated with probability at most $2ne^{- 0.25
\deg(v)\sstar /(3\dd)} < 1/n^2$.

To prove (\ref{item:vBi}), note that for each $1 \leq i \leq
\dd-\sstar$,
each neighbor of $v$ independently has probability $i/\dd$
to be in $B_{\dd-\sstar-i+1} \cup \dots \cup B_{\dd-\sstar}$, and thus
the expected number of its neighbors in $B_{\dd-\sstar-i+1} \cup \dots
\cup B_{\dd-\sstar}$ is $\frac{i\deg(v)}{\dd}$.  By Chernoff's Inequality,
for any positive value $\mu$, given $v \in B_i$, the probability that
(\ref{item:vBi}) is violated is at most $2e^{-\mu^{2}/(3\max(i\deg(v)/\dd,
\mu))}$. Plugging in $\mu = 12\sqrt{i \deg(v)/\dd} \ln n$ and
noting
that $\max(i\deg(v)/\dd, \mu)) \leq 12 \frac{i \deg(v)}{\dd} \ln n$,
the probability that (\ref{item:vBi}) is violated for $v \in B_i$ is at
most $2 e^{- 12 \ln n / 3}= 2/n^4$. By a union bound over all vertices
the probability that (\ref{item:vBi})
is violated is much smaller than  $1/n^2$.

Similarly we can prove (\ref{item:vStoBactive}). Given $v \in S_i$,
each edge incident to $v$ independently has probability $\frac{\dd -
\sstar}{\dd} \cdot \frac{\dd - 4 \sstar i}{\dd - \sstar} = \frac{\dd -
4\sstar i}{\dd}$ to be active. Thus the expected number of active edges
incident to $v \in S_i$ is   $\frac{\deg(v)(\dd - 4\sstar i)}{\dd}$.
Again by  
Chernoff's Inequality and a union bound over $v$, the first statement in
(\ref{item:vStoBactive}) is violated with probability at most $n 2e^{-
\deg(v) \ln^2 n / (3(\dd - 4\sstar i)\deg(v)/\dd  )} < n2e^{-\ln^2n/3}<
1/n^2$.  Similarly, for a neighbor $u$ of $v \in S_i$, the edge $(v, u)$
randomly and independently 
has probability $\frac{\dd - 4\sstar i}{\dd} \frac{32\dd \ln
n}{\sstar \sqrt{\deg(u)}} \leq \frac{32 (\dd - 4\sstar i)  \ln n}{\sstar
\sqrt{\dd}}$ to be both active and removable. By 
Chernoff's Inequality and a union bound over $v$ the probability
that the second statement is violated is much smaller than $1/n^2$.

(\ref{item:vBtoSiactive}) is proved in almost the same way.
Fix $1 \leq i \leq k$ and $u \in B$. Each edge $uv$ 
independently has probability
$\frac{\sstar}{k\dd} \cdot \frac{\dd - 4\sstar i}{\dd - \sstar}$ to
be active and satisfy  $v \in S_i$; 
and it has probability $\frac{\sstar}{k\dd}
\cdot \frac{\dd - 4\sstar i}{\dd - \sstar} \cdot \frac{32\dd \ln n}{\sstar
\sqrt{\deg(u)}} = \frac{32 \ln n}{k} \cdot \frac{\dd - 4\sstar i}{(\dd -
\sstar)\sqrt{\deg(u)}} > \frac{30 \ln n}{k \sqrt{\deg(u)}}$ to be both
active and removable and incident to $S_i$. Thus for any $u \in B$ 
the expected number of edges $uv$ with  $v \in S$ 
which are both active and 
removable is at least ${30\ln n}{\sqrt{\deg(u)}}$. 
Applying
Chernoff's Inequality and a union bound it follows that the
probability the statement fails is much smaller than $2/n^2$.
\end{proof}

Therefore, with probability at least $1- 7/n^2$ all assertions of Lemma 
\ref{lem:main} hold, where the function 
$h(\deg(v), i)$ in (\ref{item:Bi2}) is 
\[h_B(\deg(v),i) = 
\frac{i\deg(v)}{\dd}+ \frac{\sstar \deg(v)}{\dd }\frac{\dd - 
2\sstar (k+1)}{\dd - \sstar} -  13\sqrt{\deg(v)} \ln n.\]
Since $\deg(v) \geq \delta$, $k \sstar 
= \dd/\ln \ln n$ and by the lower bounds on $\dd$ in the assumption, 
it is easy to see that $h_B(d,i) > \sstar / 2 > \sqrt{\delta}$. 
Note that $h_B$ satisfies the 
requirement of $h(d,i)$ in Lemma \ref{lem:main}. 
We can now proceed assigning weights in $\{0,1\}$ 
to the edges in $G$ in three steps. 

In Step 1, we assign the following edges weight one: (1) for all $1 \leq
i \leq \dd - \sstar$, all the edges between $B_i$ and $\{ \bigcup_j B_j,
\dd - \sstar - i + 1 \leq j \leq \dd - \sstar\}$; (2) all the active
edges between $B$ and $S$.

In Step 2, the goal is to ensure that each vertex weight appears 
in at most
\begin{equation}
\lceil n/\dd +
5\sqrt{n/\dd} \sqrt{\ln n}/\delta^{1/4}\rceil \label{eq:step2}
\end{equation}
vertices in $B$. This is achieved by
making two modifications.
First ensure that each vertex $v$ in $B_i$ has weight exactly
$\lfloor h_B(\deg(v),i) \rfloor$.  By Lemma \ref{lem:main} applied with
$h_B(d,i)$, with probability at least $1-7/n^2$ after Step 1, 
for each $1 \leq i \leq \dd - \sstar$, 
the weight of $v \in B_i$ deviates from
\[ \frac{i\deg(v)}{\dd}+ \sum_{j=1}^k \frac{\sstar \deg(v)}{\dd k} \cdot
\frac{\dd - 4\sstar j}{\dd - \sstar} = h_B(\deg(v),i) + 13\sqrt{\deg(v)}
\ln n \]
by at most ${k\sqrt{\frac{\deg(v) \sstar }{\dd k}} \ln n +
12\sqrt{\deg(v)} \ln n < 13\sqrt{\deg(v)} \ln n}$ by Lemma
\ref{lem:main} (\ref{item:vBi}) and the first statement in
(\ref{item:vBtoSiactive}).  Thus it is possible
to transform  the weight of $v$ to  exactly $\lfloor
h_B(\deg(v),i) \rfloor$ by reducing the weights 
of at most
$26\sqrt{\deg(v)} \ln n  +1 <  26.5\sqrt{\deg(v)} \ln n$ 
(active and removable) edges from  $v$ to $S$
from one to zero.

Suppose this first modification is possible, in the second modification,
by Lemma \ref{lem:main} (\ref{item:Bi2}) and the fact 
that $h_B(\deg(v),i) > \sqrt{\delta}$ for each vertex $v$ and 
$1\leq i \leq \dd-\sstar$ and that $\dd - \sstar$ is divisible 
by $\lfloor \sqrt{\dd} \rfloor$, we can further reduce the
weights of at most $2(\sqrt{\delta})$ edges between each $v \in B$ 
and $S$ ensuring that each
integer vertex weight appears in at most 
$\left\lceil \left( \lfloor \sqrt{\delta} \rfloor
n/\dd + 4  \sqrt{(n/\dd)(\sqrt{\delta}) \ln n} \right)/ 
\lfloor \sqrt{\delta} \rfloor \right\rceil \leq  \lceil n/\dd +
5\sqrt{n/\dd} \sqrt{\ln n}/\delta^{1/4}\rceil$ vertices in $B$, 
as desired. Indeed, this can be done by considering, for any fixed
admissible $j>1$, all vertices whose weight  after the first
modification lies in $[(j-1)( \lfloor \sqrt{\delta} \rfloor),
j( \lfloor \sqrt{\delta} \rfloor))$. Their weights can be reduced
and distributed uniformly among the possible weights
in the interval $[(j-2)( \lfloor \sqrt{\delta} \rfloor),
(j-1)( \lfloor \sqrt{\delta} \rfloor))$.

It is not difficult to check that these two modifications can be
accomplished by reducing only the weights  of some
edges which are both active and removable.
Indeed, for every vertex $v \in B$ it is only needed to reduce its weight 
by at most  $26.5\sqrt{\deg(v)} \ln n+ 2 \sqrt{\delta} < 27\sqrt{\deg(v)} 
\ln n$.
By Lemma \ref{lem:main}  (\ref{item:vBtoSiactive}), the number of
edges between $v$ and $S$ which are both active and removable is at
least $27 \sqrt{\deg(v)}\ln n$, and as all active edges between $B$
and $S$ have weight one
prior to Step 2 there are enough edges whose weights can be reduced 
from one to zero to allow the two modifications.

In Step 3, we will only adjust the weights of edges within $S$ 
to ensure 
that each weight appears in at most 
$ \lfloor \frac{2016 n \ln n \ln \ln n}{\dd^{1+\epsilon}} 
\rfloor + 1$ vertices in $S$. We first use
a method developed in a paper in preparation by the second author and
J. Przyby{\l}o~\cite{PW} to identify which vertices in $S$ might 
have the same weight.
For each vertex $v \in S$, we will define a set $L(v)$ such that $v,u \in
S$ cannot have the same weight at the end of Step 3 if $u \notin L(v)$. We
will then show that  with high probability all sets $L(v)$ will
not be large.

To start, we relax the problem where the weight of each edge in $S$
can be any real number in $[0,1]$.  We first analyze the range of weight
for each $v \in S$ after adjusting weights in $S$.

By Lemma \ref{lem:main}  (\ref{item:vStoBactive}), after Step 2,
since $\sqrt{\deg(v)} \ln n \leq \deg(v)\ln n /\sqrt{\dd}$, the weight
of $v \in S_i$ is at least the number of edges incident to $v$ 
which are active but not removable, which is bounded below by
\begin{align*}
& \frac{(\dd - 4\sstar i)\deg(v)}{\dd} -
\frac{\deg(v)\ln n}{\sqrt{\dd}} - \frac{33(\dd - 4\sstar i)\deg(v) \ln
n}{\sqrt{\dd}\sstar}  \\
= & \deg(v) \left(\left(1-\frac{4\sstar i}{\dd}\right) \left(1
- \frac{33\ln n}{\dd^{\epsilon}}\right)- \frac{\ln n}{\sqrt{\dd}}
\right) \geq  \deg(v) \left(1-\frac{4\sstar i}{\dd} - \frac{34\ln
n}{\dd^{\epsilon}} \right).
\end{align*}  
This is also a lower bound on the weight of $v \in S_i$ after
Step 3.
By Lemma \ref{lem:main} (\ref{item:vtoS}), the additional weight each
vertex $v \in S$ can gain in Step 3 is at most $\deg_S(v) \cdot 1 \leq
1.5 \deg(v)\sstar/\dd$. Again together with  Lemma \ref{lem:main}
(\ref{item:vStoBactive}), the weight of $v \in S_i$ after Step 3 is
at most
$\left(\frac{(\dd - 4\sstar i)\deg(v)}{\dd}+\frac{\deg(v)\ln
n}{\sqrt{\dd}} \right) +1.5 \deg(v)\sstar/\dd = \deg(v) \left( 1 -
\frac{4\sstar i}{\dd} + \frac{\ln n}{\sqrt{\dd}}  + \frac{1.5\sstar}{\dd}
\right) < \deg(v) \left( 1 - \frac{4\sstar i}{\dd} + \frac{3\sstar}{{\dd}}
\right)$. In summary, the weight of $v \in S_i$ after assigning arbitrary
weights in $[0,1]$ to edges in $S$ is always in the interval
\begin{equation}
 I_{v, i}= 
 \left[
 \deg(v) \left(1-\frac{4\sstar i}{\dd} 
- \frac{34\ln n}{\dd^{\epsilon}} \right) ,   
 \deg(v) \left( 1 - \frac{4\sstar i}{\dd}  + \frac{3\sstar}{\dd}  \right)
 \right]. \label{eq:intervalSi2}
  \end{equation}
Therefore, vertex $v \in S_i$ and $u\in S_j$ can have the same 
weight after Step 3  only if $I_{v,i} \cap I_{u,j} \neq \emptyset$, 
which is equivalent to
\begin{align}
\deg(v) \left(1-\frac{4\sstar i}{\dd} 
- \frac{34\ln n}{\dd^{\epsilon}} \right) 
\leq &
\deg(u) \left( 1 - \frac{4\sstar j}{\dd} 
+  \frac{3\sstar}{\dd}  \right);  \text{ and } \label{eq:ineq1}
\\
\deg(u) \left(1-\frac{4\sstar j}{\dd}
- \frac{34\ln n}{\dd^{\epsilon}} \right) \leq &
\deg(v) \left( 1 - \frac{4\sstar i}{\dd}  
+ \frac{3\sstar}{\dd}  \right).  \label{eq:ineq2}
\end{align}
Let $u \in L(v)$ if and only if $u \neq v$, 
$u \in S$, and  both (\ref{eq:ineq1})
and (\ref{eq:ineq2})  hold if $u \in S_j$. 
Clearly $u \notin L(v)$ implies the distinct vertices $u,v \in S$
have distinct weights.

\begin{claim}
\label{claim:sizeLv}
With probability at least $1-1/n^2$, 
$|L(v)| \leq 
\frac{42n\deg(v) \ln n}{k \dd^{1+\epsilon} }$ for all $v \in S$. 
\end{claim}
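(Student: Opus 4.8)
The plan is to establish Claim~\ref{claim:sizeLv} by a first–moment estimate on $|L(v)|$ followed by a Chernoff bound, carried out conditionally on the cell of the random partition that contains $v$. Fix a vertex $v$ with $\deg(v)=d_v\geq\dd$ and an index $i\in[k]$, and condition on the event $v\in S_i$; note that $L(v)$ is a deterministic function of the random partition and the fixed degree sequence, so this claim needs nothing from Lemma~\ref{lem:main}.

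The first task is to make the conditions (\ref{eq:ineq1})--(\ref{eq:ineq2}) explicit in $j$. For a vertex $u$ with $\deg(u)=d$ one has $u\in L(v)$ exactly when $u$ is placed into some $S_j$, $j\in[k]$, for which (\ref{eq:ineq1}) and (\ref{eq:ineq2}) hold; since the relevant right-hand side is decreasing in $j$, (\ref{eq:ineq1}) yields an upper bound on $j$ and (\ref{eq:ineq2}) a lower bound, so the admissible $j$ form a real interval of length exactly $\frac{\dd}{4\sstar}\bigl(1+\tfrac{d_v}{d}\bigr)\bigl(\tfrac{3\sstar}{\dd}+\tfrac{34\ln n}{\dd^{\epsilon}}\bigr)=\bigl(1+\tfrac{d_v}{d}\bigr)\bigl(\tfrac34+8.5\,\ln n\,\dd^{1/2-2\epsilon}\bigr)$, using $\dd/(\sstar\dd^{\epsilon})=\dd^{1/2-2\epsilon}$. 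Hence the number of admissible integers $j\in[k]$ is at most $\bigl(1+\tfrac{d_v}{d}\bigr)\cdot 10\,\ln n\,\dd^{1/2-2\epsilon}$ once $n$ is large (here $\epsilon<1/4$ gives $\dd^{1/2-2\epsilon}\geq1$). As each vertex lands in any prescribed $S_j$ with probability exactly $\sstar/(\dd k)$, independently over the vertices and of $X_v$, this gives $\Pr[u\in L(v)\mid v\in S_i]\leq\frac{\sstar}{\dd k}\bigl(1+\tfrac{d_v}{\deg(u)}\bigr)\cdot 10\,\ln n\,\dd^{1/2-2\epsilon}$.

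Next I would sum over all $u\neq v$. Since $\deg(u)\geq\dd$ and $d_v\geq\dd$ we get $\sum_{u\neq v}\bigl(1+d_v/\deg(u)\bigr)\leq n\bigl(1+d_v/\dd\bigr)\leq 2nd_v/\dd$, and substituting $\sstar=\dd^{1/2+\epsilon}$ and $k=\dd^{1/2-\epsilon}/\ln\ln n$ then yields, uniformly in $i$,
$$\mathbb{E}\bigl[\,|L(v)|\ \big|\ v\in S_i\,\bigr]\ \leq\ \frac{\sstar}{\dd k}\cdot 10\,\ln n\,\dd^{1/2-2\epsilon}\cdot\frac{2nd_v}{\dd}\ =\ \frac{20\,n\,d_v\ln n\ln\ln n}{\dd^{3/2}}\ =\ \frac{20\,n\,d_v\ln n}{k\,\dd^{1+\epsilon}}.$$
Conditioned on $v\in S_i$, the variable $|L(v)|=\sum_{u\neq v}\mathbf{1}[u\in L(v)]$ is a sum of mutually independent indicators (each a function of $X_u$ alone), so Chernoff's inequality (in the form valid for sums of independent $\{0,1\}$ random variables, cf.\ Lemma~\ref{l21}) gives
$$\Pr\Bigl[\,|L(v)|>\frac{42\,n\,d_v\ln n}{k\,\dd^{1+\epsilon}}\ \Big|\ v\in S_i\,\Bigr]\ \leq\ \exp\!\Bigl(-\,\frac{7\,n\,d_v\ln n}{k\,\dd^{1+\epsilon}}\Bigr)\ <\ n^{-7},$$
because $\frac{n\,d_v\ln n}{k\,\dd^{1+\epsilon}}\geq\frac{n\ln n}{k\,\dd^{\epsilon}}=\frac{n\ln n\ln\ln n}{\dd^{1/2}}>\ln n$ (as $\dd<n$). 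Averaging over $i$ and taking a union bound over the $n$ vertices then gives $|L(v)|\leq\frac{42\,n\deg(v)\ln n}{k\,\dd^{1+\epsilon}}$ for every $v\in S$ with probability at least $1-n^{-6}>1-n^{-2}$, as required.

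The only genuinely delicate point is the interval count in the second paragraph: the system (\ref{eq:ineq1})--(\ref{eq:ineq2}) must be reduced to the \emph{sharp} bound of order $\bigl(1+d_v/\deg(u)\bigr)\ln n\,\dd^{1/2-2\epsilon}$ on the number of admissible $j$, rather than the crude bound of order $k$. These two agree only around the threshold $\dd^{\epsilon}\asymp\ln^2 n\,\ln\ln n$, and for larger $\dd$ it is precisely the use of the smaller, exact bound that forces the final constant below $42$; everything else is a routine first-moment-plus-Chernoff argument with the explicit parameters $\sstar=\dd^{1/2+\epsilon}$ and $k=\dd^{1/2-\epsilon}/\ln\ln n$.
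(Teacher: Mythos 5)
Your proof is correct and follows essentially the same route as the paper's: reduce (\ref{eq:ineq1})--(\ref{eq:ineq2}) to an interval of admissible indices $j$ of length $O\bigl((1+\deg(v)/\deg(u))\ln n\,\dd^{1/2-2\epsilon}\bigr)$, deduce a per-vertex membership probability and hence $\mathbb{E}|L(v)|=O\bigl(n\deg(v)\ln n/(k\dd^{1+\epsilon})\bigr)$, and finish with Chernoff plus a union bound over $v$. The only differences are cosmetic bookkeeping of constants (you reach $20$ where the paper reaches $37$ before concentrating to $42$).
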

\begin{proof}
Given $v \in S_i$ we bound the number of vertices  $u$ in $L(v)$
by bounding  the number of vertices $u$ in all sets $S_j$ 
where $j$ satisfies both
inequalities (\ref{eq:ineq1}) and (\ref{eq:ineq2}). These two inequalities
together imply
\[
    \frac{\deg(v)}{\deg(u)} \left(1-\frac{4\sstar i}{\dd} 
- \frac{34\ln n}{\dd^{\epsilon}} \right) -  \frac{3\sstar}{\dd} 
    \leq 
    1-\frac{4\sstar j}{\dd} 
    \leq 
    \frac{\deg(v)}{\deg(u)} \left( 1 - \frac{4\sstar i}{\dd}  
+ \frac{3\sstar}{\dd}  \right) + \frac{34\ln n}{\dd^{\epsilon}}.
\]
This means the value of $\frac{4\sstar j}{ \dd}$ 
can only lie in an interval of length 
$\frac{\deg(v)}{\deg(u)}\left(\frac{3\sstar}{\dd} +  \frac{34\ln
n}{\dd^{\epsilon}}  \right) + \frac{34\ln n}{\dd^{\epsilon}}
+\frac{3\sstar}{\dd}  \leq \frac{2\deg(v)}{\dd}\left(\frac{3\sstar}{\dd}
+  \frac{34\ln n}{\dd^{\epsilon}}  \right) \leq \frac{2\deg(v)}{\dd}
\frac{37\ln n}{\dd^{\epsilon}} $ where the last inequality 
uses $\epsilon < 1/4$. This implies $j$ can only lie in an
interval of length at most $\frac{\dd}{4\sstar} \frac{2\deg(v)}{\dd}
\frac{37\ln n}{\dd^{\epsilon}} = \frac{\deg(v)}{2\sstar} \frac{37\ln
n}{\dd^{\epsilon}}$.  

Since with probability $\frac{\sstar}{k \dd}$, the vertex $u$ 
lies in $S_j$
for any given $j$, the probability 
that $u$ satisfies $u \in S_j$ for some $j$ with $I_{v,i}
\cap I_{u,j} \neq \emptyset$ is at most 
$$\frac{\sstar}{k \dd}
\left(\frac{\deg(v)}{2\sstar } \frac{37\ln
n}{\dd^{\epsilon}}+1\right) \leq \frac{\sstar}{k \dd
}\frac{\deg(v)}{\sstar } \frac{37\ln n}{\dd^{\epsilon}} =
\frac{37\deg(v) \ln n}{k \dd^{1+\epsilon} }. $$ 
The first inequality uses the fact that $\epsilon < 1/4$. 
Thus
$\mathbb{E}[|L(v)|] \leq n\frac{37\deg(v) \ln n}{k
\dd^{1+\epsilon} } $.  Since the events for different vertices $u$ 
are independent, by
Chernoff's Inequality the probability that $|L(v)| \geq n\frac{42\deg(v)
\ln n}{k \dd^{1+\epsilon} }$ is at most $n^{-4}$. By a union bound
over $v$, the desired result follows.
\end{proof}

We are now ready to adjust the weights of edges in $S$.  First we show
there is a desired weighting with edges in $S$ having fractional weights
in $\{0, 1/4, 1/2, 3/4, 1\}$.
\begin{claim}
\label{claim:Sfractional}
With probability at least $1-8/n^2$ one
can assign each edge in $S$
a weight in $\{0, 1/4, 1/2, 3/4, 1\}$ such that for each vertex $v \in
S$, the number of vertices in $L(v)$ whose weight (including the
weight to $B$) differs from that of $v$ by strictly less than $11/4$  
is at most $\lfloor \frac{2016 n \ln n \ln \ln n}{\dd^{1+\epsilon}}
\rfloor
$. 

In particular, if $\delta^{1+\epsilon} > 2016 n \ln n \ln \ln n$, for
any two vertices $v, u \in S$ where $u \in L(v)$, the difference between
the weights of $v$ and $u$ is at least $11/4$.
\end{claim}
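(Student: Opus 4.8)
The plan is to condition on the event that the conclusions of Lemma~\ref{lem:main} and Claim~\ref{claim:sizeLv} hold — this has probability at least $1-7/n^2-1/n^2=1-8/n^2$, which is exactly the probability claimed — and then to exhibit the weighting of $E(G[S])$ \emph{deterministically}. Two consequences of the conditioning are all that is needed. First, plugging $k=\dd^{1/2-\epsilon}/\ln\ln n$ and $\sstar=\dd^{1/2+\epsilon}$ into Claim~\ref{claim:sizeLv} and using $\deg_S(v)\ge\tfrac12\sstar\deg(v)/\dd$ from Lemma~\ref{lem:main}~(\ref{item:vtoS}) gives
\[
\frac{|L(v)|}{\deg_S(v)}\ \le\ \frac{84\,n\ln n\ln\ln n}{\dd^{1+\epsilon}}\ \le\ \frac{T+1}{24},\qquad T:=\Big\lfloor\tfrac{2016\,n\ln n\ln\ln n}{\dd^{1+\epsilon}}\Big\rfloor .
\]
Second, after Step~2 the weight of $v\in S_i$ equals $F(v)+\sum_{e\ni v,\,e\subseteq S}z(e)$, where $F(v)$ (the contribution of the edges to $B$) is a fixed integer and the $S$-part can be any multiple of $\tfrac14$ in $[0,\deg_S(v)]$; moreover, by Lemma~\ref{lem:main}~(\ref{item:vStoBactive}) and the bounded effect of Step~2 on $S$-vertices, $F(v)$ lies within $O(\sstar\deg(v)/\dd)$ of the \emph{top} of the interval $I_{v,i}$ of~(\ref{eq:intervalSi2}), whose length $\Theta(\deg(v)\ln n/\dd^{\epsilon})$ is far larger since $\epsilon<1/4$. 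So each $v$ has a ``realizable window'' of quarter-integer weights of length at least $\tfrac34\deg_S(v)$, lying in a thin top sliver of $I_{v,i}$.

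Next I would pick, for every $v\in S$, a real target $\tau(v)$ in a slightly shrunk copy of its realizable window so that few list-neighbours land nearby, processing $S$ in an arbitrary order: when $v$ is reached, choose $\tau(v)$ so that at most $T$ already-processed $u\in L(v)$ satisfy $|\tau(u)-\tau(v)|<\tfrac{13}{2}$. This is possible because the set of positions lying within $\tfrac{13}{2}$ of more than $T$ earlier list-neighbours has measure at most $13|L(v)|/(T+1)<\tfrac34\deg_S(v)$ (Markov applied to the covering multiplicity), which is shorter than the window. A short a~posteriori argument upgrades this one-sided guarantee to a two-sided one: if $U=\{u\in L(v):|\tau(u)-\tau(v)|<\tfrac{13}{4}\}$ and $w$ is the last-processed vertex of $U\cup\{v\}$, then every other $x\in U\cup\{v\}$ is within $\tfrac{13}{2}$ of $\tau(w)$ \emph{and} lies in $L(w)$ — the latter because two targets differing by less than $\tfrac{13}{2}$ force the long intervals $I$ to overlap, given that both targets sit near their tops — so $|U|\le T$ by the rule used when $w$ was placed. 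Finally I would realize these targets: with $\rho(v)=\tau(v)-F(v)\in[\tfrac18\deg_S(v),\tfrac78\deg_S(v)]$ the feasibility criterion $\sum_{v\in W}\rho(v)\le 2|E(G[S][W])|+|\partial W|=\sum_{v\in W}\deg_S(v)$ holds for every $W$ (as $\rho(v)\le\tfrac78\deg_S(v)$), so there is a fractional $z:E(G[S])\to[0,1]$ with $\sum_{e\ni v}z(e)=\rho(v)$; rounding $z$ to the grid $\{0,\tfrac14,\tfrac12,\tfrac34,1\}$ by applying the Beck--Fiala rounding of Lemma~\ref{l41} to the fractional parts of the numbers $4z(e)$ moves each vertex sum by less than $\tfrac14$. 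Hence the weights of $u$ and $v$ differ by at least $\tfrac{13}{4}-\tfrac12=\tfrac{11}{4}$ whenever $|\tau(u)-\tau(v)|\ge\tfrac{13}{4}$, so at most $T$ vertices of $L(v)$ finish within $\tfrac{11}{4}$ of $v$; since $u\notin L(v)$ already forces distinct weights, Claim~\ref{claim:Sfractional} follows, and the ``in particular'' assertion is the case $T=0$.

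The main obstacle is the realizability step and its coupling to the spreading step: unlike on the $B$-side, where weights were adjusted one vertex at a time along active--removable edges, the weight of $v\in S$ is tied through shared $S$-edges to the weights of its neighbours, so not every tuple of window targets is jointly achievable even fractionally. This is precisely why the grid $\{0,\tfrac14,\tfrac12,\tfrac34,1\}$ rather than $\{0,1\}$ is used here — one first gains full fractional freedom and only afterwards rounds, at the price of a controlled $<\tfrac14$ per-vertex error — and it forces the targets into the middle portion of each window, which is still $\Theta(\deg_S(v))$ wide, hence comfortably larger than $|L(v)|$ times the separation $\tfrac{13}{2}$ we need. The part I expect to require the most care is the quantitative ``thin sliver'' claim, namely that $F(v)$ really sits in the top $O(\sstar\deg(v)/\dd)$-fraction of $I_{v,i}$: this is what makes ``nearby targets $\Rightarrow$ mutual membership in $L$'' hold and thereby makes the greedy choice of targets two-sided, and it is where the various error terms of Lemma~\ref{lem:main} must be tracked precisely.
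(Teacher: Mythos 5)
Your overall architecture (choose a real target $\tau(v)$ for each $v\in S$ greedily so that few list-neighbours are nearby, then realize the targets by fractional edge weights, then round to the grid $\{0,\tfrac14,\tfrac12,\tfrac34,1\}$) is genuinely different from the paper, which instead runs a sequential Kalkowski--Karo\'nski--Pfender-type algorithm: all $S$-edges start at $1/2$, vertices are processed in order, forward edges may gain $1/4$ and backward edges may shift by $\pm 1/4$ subject to keeping each processed vertex inside a two-element set $\Lambda_{v_j}$, and the block $J_{v_i}$ of length $12$ quarter-steps is chosen inside the achievable arithmetic progression $P_i$ so as to collide with few earlier $J_{v_j}$, $v_j\in L(v_i)$. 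Unfortunately your route has a fatal gap exactly at the step you flag as the ``main obstacle'': the realizability of an independently chosen target vector. It is simply not true that $\rho(v)\le\tfrac78\deg_S(v)$ for all $v$ (or your subset inequality $\sum_{v\in W}\rho(v)\le 2|E(G[S][W])|+|\partial W|$) guarantees a fractional $z:E(G[S])\to[0,1]$ with $\sum_{e\ni v}z(e)=\rho(v)$ exactly. Take $G[S]$ a star $K_{1,3}$ with centre $c$ and leaves $\ell_1,\ell_2,\ell_3$, and set $\rho(c)=0.1$, $\rho(\ell_i)=0.8$: every vertex satisfies $\rho(v)\le\tfrac78\deg_S(v)$ and every $W$ satisfies your inequality, yet $z(c\ell_i)=0.8$ forces $\sum_{e\ni c}z(e)=2.4\neq 0.1$. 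Since your targets are chosen vertex by vertex with no coupling across edges, nothing prevents exactly this kind of joint infeasibility. The paper's sequential algorithm is designed precisely to avoid it: the set of weights reachable by $v_i$ is defined \emph{by} the edges (the progression $P_i$), and already-processed vertices are only ever moved within their $\Lambda_{v_j}$, so joint achievability is maintained by construction rather than asserted after the fact.

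A second, independent problem is the ``thin sliver'' claim on which your two-sided upgrade of the greedy rests. After Step 2 the $B$-contribution $F(v)$ to the weight of $v\in S_i$ equals the number of active edges at $v$ minus however many active--removable edges at $v$ were switched off to fix the weights of $v$'s neighbours in $B$; that deduction is anywhere between $0$ and roughly $33(\dd-4\sstar i)\deg(v)\ln n/(\sqrt{\dd}\,\sstar)\approx 33\deg(v)\ln n/\dd^{\epsilon}$, which is of the same order as the full length of $I_{v,i}$ (dominated by the $34\deg(v)\ln n/\dd^{\epsilon}$ term, since $\sstar/\dd=\dd^{\epsilon-1/2}\ll\ln n/\dd^{\epsilon}$ for $\epsilon<1/4$). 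So $F(v)$ need not sit near the top of $I_{v,i}$, the implication ``targets within $13/2$ $\Rightarrow$ the intervals $I$ overlap'' fails, and with it the argument that the last-processed vertex $w$ of $U\cup\{v\}$ has all of $U\cup\{v\}$ in $L(w)$ (note also that $u,x\in L(v)$ does not by itself give $x\in L(u)$, even though $L$ is symmetric). The paper sidesteps this entirely: it never needs targets to lie in a particular part of $I_{v,i}$, only that vertices with disjoint intervals can never collide and that within $L(v_i)$ the disjoint blocks $J_{v_j}$ leave a lightly-occupied block inside $P_i$ by pigeonhole.
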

\begin{proof}
We use a modified version of the algorithm  by 
Kalkowski, Karo\'nski, and Pfender~\cite{KKP}.
All edge weights in $S$ are initialized to be $1/2$. 

Order the vertices of $S$ arbitrarily  as $v_1, v_2, \dots$ and
process them
sequentially starting from $v_1$. When processing
$v_i$, we will find a set $\Lambda_{v_i}$ of the form $\{\frac{12a}{4},
\frac{12a+1}{4}\}$ for some $a \in \mathbb{Z}$, such that throughout
the later stages of the algorithm, $\Lambda_{v_i}$ will stay unchanged and
the weight of $v_i$ will always stay in  $\Lambda_{v_i}$. 
Suppose we are processing $v_i$ for $i \geq 1$. For each forward
edge, i.e., edge $v_i v_j$ where $j > i$ if exists, we allow to 
change the edge
weight by increasing it by $0$ or $1/4$; 
for each  backward edge $v_i v_j$
where $j < i$ if exists, we allow to change the weight  
by adding an element of $\{-1/4, 0, 1/4\}$, where
if the current weight of $v_j$ is the maximum value  in $\Lambda_{v_j}$,
we can only change this backward edge by adding a member of  
$\{-1/4, 0\}$, whereas if the
current weight of $v_j$ is the minimum value  in $\Lambda_{v_j}$, we can
only change this backward edge by adding a member of
$\{0, 1/4\}$. This rule guarantees
 the weight of $v_j$ which has been 
processed always stays in $\Lambda_{v_j}$. Furthermore, by all
combinations of the allowable changes, the weight of $v_i$ can achieve
any value in an arithmetic progression $P_i$ with common difference $1/4$
and of length $\deg_S(v_i)$.
In addition, by our constraints on the structure of 
the sets $\Lambda_{v_i}$,
a vertex $v_i$ has weight in $\Lambda_{v_i} 
= \{\frac{12a}{4}, \frac{12a+1}{4}\}$ if and only if $v_i$ has weight 
in $J_{v_i} = \{\frac{12a}{4},
\frac{12a+1}{4}, \frac{12a+2}{4}, \dots, \frac{12a+11}{4}\}$. 
Thus there must be a set $ \{\frac{12b}{4},
\frac{12b+1}{4}, \frac{12b+2}{4}, \dots, \frac{12b+11}{4}\} 
\subset P_i$ for some $b \in \mathbb{Z}$ which is shared
 by at most $\lfloor |L(v)| / (( |P_i| -22)/12) \rfloor $ 
sets $J_{v_j}$ for $v_j \in
 L(v)$ and $j < i$. 
Fix such a set $\{\frac{12b}{4},
\frac{12b+1}{4}, \frac{12b+2}{4}, \dots, \frac{12b+11}{4}\} 
\subset P_i$ as $J_{v_i}$ and then ensure that the weight of $v_i$ 
lies in $\{ \frac{12b}{4}, \frac{12b+1}{4}\} = \Lambda_{v_i}$
by adjusting the weights of
forward and backward edges appropriately, and then continue to 
$v_{i+1}$.  
By Claim \ref{claim:sizeLv} and Lemma \ref{lem:main}
(\ref{item:vtoS}) which implies that $|P_i| \geq 0.5 \sstar \deg(v)/\dd$,
\[ |L(v)| / (( |P_i| -22)/12)  <  \frac{12 \cdot \frac{42n\deg(v) \ln n}{k
\dd^{1+\epsilon} } }{ 0.25 \deg(v) \sstar / \dd} \leq \frac{48\cdot
42 n}{k  } \frac{\ln n}{\dd^{\epsilon} \sstar } = \frac{2016 n \ln
n \ln \ln n}{\dd^{1+\epsilon}} \]
where the equality is by plugging in {$k\sstar = \dd/ \ln \ln n$}. 
Therefore we have shown that each set $J_{v_i}$ can
be shared by at most $\lfloor \frac{2016  n \ln n \ln \ln n}
{\dd^{1+\epsilon}} \rfloor$ other
$J_{v_j}$ for $v_j \in L(v_i)$. 
Furthermore, if $J_v$ is different from $J_u$ which implies $J_v$
is disjoint from $J_u$, then since the weight of $v$ is in $\Lambda_v
\subset J_v$ and the weight of $u$ is in $\Lambda_u \subset J_u$, 
the difference between
the weights of $u$ and $v$ is at least $11/4$.
Lastly, notice that each edge 
changes its weight at most twice 
(once as a forward edge and once as a backward
edge), so all edge weights in $S$ stay in 
$\{0, 1/4, 1/2, 3/4, 1\}$. Therefore the first statement holds. 
The second statement holds by noticing that when 
$\delta^{1+\epsilon} > 2016 n \ln n \ln \ln n$, then 
$\lfloor \frac{2016  n \ln n \ln \ln n}{\dd^{1+\epsilon}} \rfloor = 0$. 
\end{proof}

Suppose $\dd^{\epsilon} \geq \ln^2 n \ln \ln n$. 
We are now ready to finish the
construction and the proof.
Suppose $z(e)$ are the current weights of edges $e$ in $E(G)$ where when
$e \in E(S)$, $z(e) \in \{0, 1/4, 1/2, 3/4, 1\}$ and when $e \notin E(S)$,
$z(e) \in \{0,1\}$. 
We now show that we can change the edge weights in $S$ to be
in $\{0,1\}$ so that each weight is  shared by 
at most $ \lfloor \frac{2016 n \ln n \ln \ln n}{\dd^{1+\epsilon}} 
\rfloor + 1$ vertices
in $S$.  To achieve this, we apply 
Lemma \ref{l41} to the induced subgraph on $S$ to conclude
that  there is a binary weighting
$x: E(G) \to \{0,1\}$ such that $x(e) = z(e)$ for $e \notin E(S)$,
and for each $v\in S$, 
\begin{equation}
\sum_{e \ni v}
z(e) - 1 < \sum_{e \ni v} x(e) \leq \sum_{e \ni v} z(e) + 1. \label{eq:wz}
\end{equation}
We now bound the number of vertices in $S$  sharing the same weight. 
Given $v \in S$, if a different vertex $u\in S$ satisfies 
$\sum_{e \ni v} x(e)= \sum_{e \ni u} x(e)$, then $u \in L(v)$. 
Furthermore, by  the triangle inequality and (\ref{eq:wz}), 
$$
0 = \left|\sum_{e \ni v} x(e) - \sum_{e \ni u} x(e)\right| \geq 
\left|\sum_{e \ni v} z(e) - \sum_{e \ni u} z(e)\right| - 2, 
$$
which implies $\left|\sum_{e \ni v} z(e) - \sum_{e \ni u} z(e)\right|
\leq  2 < 11/4$. By Claim \ref{claim:Sfractional}, there are at most
$\lfloor \frac{2016 n \ln n \ln \ln n}{\dd^{1+\epsilon}} \rfloor$
different $u \in L(v)$ with  $\left| \sum_{e \ni v} z(e) - \sum_{e
\ni
u} z(e)\right| < 11/4$. Thus each weight with respect to $x$ is shared
by at most   $\lfloor \frac{2016 n \ln n \ln \ln n}{\dd^{1+\epsilon}}
\rfloor + 1$ vertices in $S$, as desired.

We have shown in (\ref{eq:step2}) in  Step 2 that 
the number of vertices in $B$ with the
same weight is at most $\lceil n/\dd +
5\sqrt{n/\dd} \sqrt{\ln n}/\delta^{1/4}\rceil$, and note that  weights
of vertices in $B$ do not change after Step 2.  Therefore we have shown
that there is a spanning 
subgraph $H$ of $G$ (corresponding to the edges with
$x(e)=1$) satisfying $m(H) \leq \left\lceil n/\dd + 5 \sqrt{n/\dd}
\sqrt{\ln n}/\delta^{1/4} \right\rceil +  \left\lfloor \frac{2016 n \ln
n \ln \ln n}{\dd^{1+\epsilon}} \right\rfloor + 1$. This completes the
proof of the first statement in Theorem \ref{thm:delta}.
In case $\delta^{1+\epsilon} > 2016 n \ln n \ln \ln n$, $m(H)
\leq \lceil n/\dd + 5\sqrt{n/\dd} \sqrt{\ln n}/\delta^{1/4}\rceil
+ 1 = \lceil  n/(\dd+1) + n/(\dd(\dd+1)) +5\sqrt{n/\dd} \sqrt{\ln
n}/\delta^{1/4} \rceil + 1$.  Since $\delta^{1+\epsilon} > 2016 n \ln
n \ln \ln n$, the value of $n/(\dd(\dd+1)) + 5\sqrt{n/\dd} \sqrt{\ln
n}/\delta^{1/4}$ is arbitrarily small
when $n$ is sufficiently large. Thus in this case,
$m(H) \leq \lceil  n/(\dd+1) \rceil +2$, as needed.

To see the first statement in Theorem \ref{t14} holds, notice that when
$\dd^{1+\epsilon} \geq  2016 n \ln n \ln \ln n$
then it is implied by the
second statement in  Theorem \ref{thm:delta}. Otherwise it follows 
from the first statement of this theorem and the 
fact that we may assume that $\dd \geq \Omega((n/\log n)^{1/4})$
by the results in Section 2, that $m(H)
\leq (n/\dd) (1+o(1)) = (n/(\dd+1))(1+o(1))$.  The second statement
in Theorem \ref{t14} holds since the condition $\delta^{1.24} \geq
n$ implies that for sufficiently large $n$, $\delta^{1.245} > 2016 n
\ln n \ln \ln n$ and the desired result follows from the second
statement in Theorem \ref{thm:delta}.
\hfill $\Box$

\section{Open problems}

The two conjectures \ref{c11} and \ref{c12} remain open, although
we have established some weaker asymptotic versions. It is possible
that the constant $2$ in both conjectures can even be replaced by 
$1$ provided the number of vertices in the graphs considered is 
large.  It may be
interesting to prove that the assertions of the two conjectures
hold if we replace the constant $2$ in each of them by some absolute
constant $C$. 
It will also be nice to prove that every
$d$-regular graph on $n$ vertices, where $d=o(n)$, contains a
spanning subgraph $H$ in which every degree between $0$ and $d$
appears $(1+o(1))\frac{n}{d+1}$ times,
even when $d$ is nearly linear in $n$. 
As is the case throughout 
the paper, the $o(1)$-term here tends to $0$ as $n$ tends
to infinity. Finally, Theorems \ref{t17} and 
\ref{t18} suggest the question of deciding whether or not 
there is an absolute constant $C$ so that
every graph $G$ (with a finite irregularity strength $s(G)$)
contains a spanning subgraph $H$ satisfying
$m(H) \leq s(G)+C$.
\vspace{0.1cm}

\noindent
{\bf Acknowledgment:}\,  We thank  D\"om\"ot\"or P\'alv\"olgyi 
for pointing out that the original version of our conjectures has been 
too strong.

\end{document}